\newcommand{\CC}{{\mathbb{C}}}
\newcommand{\FF}{{\mathbb{F}}}
\newcommand{\QQ}{{\mathbb{Q}}}
\newcommand{\ZZ}{{\mathbb{Z}}}
\newcommand{\bG} {\mathbf G}
\newcommand{\bH} {\mathbf H}
\newcommand{\bL} {\mathbf L}
\newcommand{\bM} {\mathbf M}
\newcommand{\bP} {\mathbf P}
\newcommand{\bS} {\mathbf S}
\newcommand{\bT} {\mathbf T}
\newcommand{\cE} {\mathcal E}
\newcommand{\cH} {\mathcal H}
\newcommand{\cO} {\mathcal O}
\newcommand{\cP} {\mathcal P}
\newcommand{\cS} {\mathcal S}
\newcommand{\cU} {\mathcal U}
\newcommand{\fA} {\mathfrak A}
\newcommand{\fS} {\mathfrak S}
\newcommand{\fP} {\mathfrak P}
\newcommand{\Aut}{{{\operatorname{Aut}}}}
\newcommand{\End}{{{\operatorname{End}}}}
\newcommand{\Out}{{{\operatorname{Out}}}}
\newcommand{\Br}{{{\operatorname{Br}}}}
\newcommand{\Ind}{\operatorname{Ind}}
\newcommand{\Infl}{\operatorname{Infl}}
\newcommand{\IBr}{\operatorname{IBr}}
\newcommand{\Irr}{\operatorname{Irr}}
\newcommand{\Gal}{\operatorname{Gal}}
\newcommand{\GL}{\operatorname{GL}}
\newcommand{\GU}{\operatorname{GU}}
\newcommand{\PGL}{\operatorname{PGL}}
\newcommand{\PSL}{\operatorname{PSL}}
\newcommand{\SL}{\operatorname{SL}}
\newcommand{\SU}{\operatorname{SU}}
\newcommand{\CSp}{\operatorname{CSp}}
\newcommand{\Sp}{\operatorname{Sp}}
\newcommand{\SO}{\operatorname{SO}}
\newcommand{\Spin}{\operatorname{Spin}}
\newcommand{\Syl}{\operatorname{Syl}}
\newcommand{\RLG}{R_{\bL}^\bG}
\newcommand{\RTG}{R_{\bT}^\bG}
\newcommand{\RTH}{R_{\bT}^{\bH}}
\newcommand{\RMG}{{R_\bM^\bG}}
\newcommand{\RML}{{R_\bM^\bL}}
\newcommand{\RLM}{{R_\bL^\bM}}
\newcommand\sRLG{{{}^*\!R_\bL^\bG}}
\newcommand\sRTH{{{}^*\!R_\bT^\bH}}
\newcommand\sRTG{{{}^*\!R_\bT^\bG}}
\newcommand{\tw}[1]{{}^{#1}\!}
\newcommand{\ula}{{\underline{\lambda}}}
\newcommand{\tG}{{\tilde G}}
\newcommand{\tM}{{\tilde M}}
\newcommand{\tchi}{{\tilde\chi}}
\let\vhi=\varphi
\let\la=\lambda
\let\semidi\rtimes
\newtheorem{thm}{Theorem}[section]
\newtheorem{lem}[thm]{Lemma}
\newtheorem{cor}[thm]{Corollary}
\newtheorem{prop}[thm]{Proposition}
\newtheorem{conj}[thm]{Conjecture}
\newtheorem{prob}[thm]{Problem}
\theoremstyle{definition}
\newtheorem{exa}[thm]{Example}
\begin{document}

\title[Local-global conjectures and blocks]{Local-global conjectures\\ and blocks of simple groups}
\date{\today}
\author{Radha Kessar}
\address{Department of Mathematics, City, University of London,
  Northampton Square, EC1V 0HB London UK}
\email{radha.kessar.1@city.ac.uk}
\author{Gunter Malle}
\address{FB Mathematik, TU Kaiserslautern, Postfach 3049,
  67653 Kaisers\-lautern, Germany.}
\email{malle@mathematik.uni-kl.de}

\begin{abstract}
We give an expanded treatment of our lecture series at the 2017 Groups
St Andrews conference in Birmingham on local-global conjectures and the
block theory of finite reductive groups.
\end{abstract}

\maketitle

\section{Introduction}
The aim of these notes is to describe recent progress in the ordinary and
modular
representation theory of finite groups, in particular pertaining to the
local-global counting conjectures. As this relies heavily on having sufficient
knowledge about the representation theory of finite simple groups, we will also
highlight the major advances and results obtained in the block theory of
finite groups of Lie type.

The general setup will be as follows: $G$ will be a finite group,
$$\Irr(G)=\{\text{trace functions of irreducible representations }
G\rightarrow\GL_n(\CC)\}$$
its set of irreducible complex characters. For $\chi\in\Irr(G)$, the value
$\chi(1)$ at the identity element of $G$ is called its \emph{degree}; it is the
degree of any representation affording this character.

We also choose a prime $p$ (with
the interesting case being the one when $p$ divides the group order $|G|$).

The aim is now to link, as much as possible, aspects of the representation
theory of $G$, like its set of irreducible characters $\Irr(G)$, their degrees,
and so on, to those of local subgroups
of $G$. Here a subgroup $N$ of $G$ is called \emph{$p$-local} if $N=N_G(Q)$ for
some $p$-subgroup $1\ne Q\le G$. An important example of local subgroups is
given by the normalisers $N_G(P)$ of Sylow $p$-subgroups $P\in\Syl_p(G)$.

\section{The fundamental conjectures}
The character theory of finite groups was invented by G.~Frobenius more than a
hundred years ago. But still there are many basic open questions that remain
unsolved to the present day. We present some of these in this section.

\subsection{The McKay conjecture}
John McKay in the beginning of the 1970s counted irreducible characters of
odd degree of the newly discovered sporadic simple groups; here are some
such numbers:
$$M_{11}: 4,\ \ M_{12}: 8,\ \ Co_1,Fi_{2}: 32,\ \ B, M: 64.$$
Strikingly, all of these are 2-powers. In 1971 Ian Macdonald \cite{McD} showed
that the number of odd degree irreducible characters is a power of~2 for all
symmetric groups $\fS_n$. (But obviously this statement cannot be
true for all groups, think of the cyclic group of order~3.) The right
generalisation seems to be as follows: let
$$\Irr_{p'}(G):=\{\chi\in\Irr(G)\mid \chi(1)\not\equiv0\pmod p\}$$
be the subset of irreducible characters of $G$ of degree prime to $p$, then the
following should be true \cite{Mc72}:

\begin{conj}[McKay  (1972)]   \label{conj:McK}
 Let $G$ be a finite group, $p$ a prime and $P\in\Syl_p(G)$. Then
 $$|\Irr_{p'}(G)|=|\Irr_{p'}(N_G(P))|.$$
\end{conj}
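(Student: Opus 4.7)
The plan is to attack Conjecture~\ref{conj:McK} via a reduction to finite simple groups. The first step is to formulate a stronger, inductive version of the statement --- the so-called \emph{inductive McKay condition} --- which makes sense for each individual non-abelian simple group $S$, and then to prove that if every such $S$ satisfies this inductive condition at the prime $p$, the McKay conjecture holds at $p$ for every finite group. This reduction proceeds by induction on $|G|$: pick a minimal counterexample, choose a minimal normal subgroup $N$, and use Clifford theory applied to the composition factors of $N$, together with the inductive hypothesis, to splice together an $N_G(P)$-equivariant bijection $\Irr_{p'}(G)\to\Irr_{p'}(N_G(P))$. This is the Isaacs--Malle--Navarro reduction theorem.

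The second step is to verify the inductive McKay condition for every non-abelian finite simple group. For sporadic groups, alternating groups and the Tits group, this is a finite (if laborious) computation. The substantial case is that of simple groups of Lie type of the form $\bG^F/Z(\bG^F)$, and here the strategy splits according to whether $p$ equals the defining characteristic $\ell$ of $\bG$ or not. For $p=\ell$ the normaliser $N_G(P)$ of a Sylow $p$-subgroup is essentially a Borel subgroup, and the $p'$-characters on both sides are naturally parametrised by semisimple classes in the dual group. For $p\ne\ell$ the $p'$-characters of $\bG^F$ are controlled via $e$-Harish-Chandra theory, with $e$ the multiplicative order of $q$ modulo $p$, while $N_G(P)$ is closely tied to the normaliser of a Sylow $\Phi_e$-torus of $\bG^F$; Jordan decomposition of characters, transported through a regular embedding $\bG\hookrightarrow\tG$, then yields a candidate bijection.

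The hard part will be turning such a candidate bijection into one that satisfies the full inductive McKay condition: it has to be equivariant for the action of the outer automorphism group $\Out(S)$ (diagonal, field and graph automorphisms), and for each character $\chi$ the bijection must respect extendibility to the relevant stabilisers inside the universal central extension, that is, a strong cohomological compatibility condition. Achieving both simultaneously demands very precise control of how outer automorphisms act on Lusztig series, on Harish-Chandra induced characters, and on the labels coming from Gelfand--Graev representations; this is where nearly all of the technical difficulty --- and most recent work in the area --- is concentrated, particularly for bad primes, for small defining fields, and for the exceptional types.
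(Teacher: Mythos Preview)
The statement you are addressing is a \emph{conjecture}, not a theorem; the paper does not prove it and does not claim to. What the paper does is exactly what your proposal sketches: it explains the Isaacs--Malle--Navarro reduction (Theorem~\ref{thm:IMN}) to an inductive condition on simple groups, and then surveys the state of the verification of that condition for the various families of simple groups. In that sense your outline is an accurate summary of the paper's approach, not an alternative to it.

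What you should be explicit about is that this program is \emph{not complete}. The paper records that the inductive McKay condition has been verified for sporadic groups, alternating groups, groups of Lie type in defining characteristic, and several Lie-type families in non-defining characteristic, but that at the time of writing the types $B_n$, $\tw{(2)}D_n$, $\tw{(2)}E_6$ and $E_7$ remain open for general primes. The only prime for which the full conjecture is actually a theorem is $p=2$ (Theorem~\ref{thm:McK}, Malle--Sp\"ath). So your final paragraph, which correctly identifies equivariance and the extendibility/cohomological compatibility as the bottleneck, is describing genuinely unresolved work, not merely ``technical difficulty'' that one can wave through. As written, your proposal is a fair description of a research strategy, but it is not a proof, and neither is anything in the paper; you should frame it as such. (A minor point: your use of $\ell$ for the defining characteristic and $p$ for the conjecture's prime is the reverse of the paper's convention.)
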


This conjecture does indeed predict global data in terms of local information.
Since it proposes an explicit formula for $|\Irr_{p'}(G)|$ it is sometimes
also called a counting conjecture.
Note that for $G$ a sporadic group as above, or a symmetric group, Sylow
2-subgroups are self-normalising and then
$\Irr_{2'}(N_G(P))=\Irr_{2'}(P)=\Irr(P/P')$ is an abelian 2-group and hence
of order a power of~2.

Marty Isaacs \cite{Is73} showed in 1973 that Conjecture~\ref{conj:McK} is true
for all groups of odd order, using the Glauberman correspondence, before even
having been aware of McKay's paper.

\begin{exa}   \label{exa:Sn}
Let $G=\fS_n$ the symmetric group of degree~$n$. Frobenius showed how the
irreducible characters of $G$ can be naturally labelled by partitions
$\la\vdash n$ of $n$. Let us write $\chi^\lambda$ for the irreducible
character labelled by $\lambda$. The degree of $\chi^\lambda$ is given by the
well-known hook formula
$$\chi^\lambda(1)=\frac{n!}{\prod_{h}\ell(h)}\,,$$
where the product runs over all \emph{hooks of $\lambda$}, that is, all
boxes $(i,j)$ of the Young diagram of $\lambda$, and $\ell(h)$ denotes the
\emph{length} of the hook in that diagram starting at box $(i,j)$. Macdonald
\cite{McD} determined when this expression is an odd number, and obtained the
following result: write $n=2^{k_1}+2^{k_2}+\ldots$ with $k_1<k_2<\ldots$. Then
$|\Irr_{2'}(\fS_n)|=2^{k_1+k_2+\ldots}$, which is indeed a power of~2. Formulas
for general $p\ge2$ can be given in terms of the $p$-adic expansion of $n$
using generating functions (see \cite{McD}). \par
On the other hand a Sylow $2$-subgroup $P$ of $\fS_n$ is a direct product
$P=P_1\times P_2\times\cdots$ with $P_i=C_2\wr C_2\wr\cdots$ ($k_i$ factors),
an iterated wreath product, and it is self-normalising. Now as already pointed
out above, the only $p'$-characters of $p$-groups are the linear characters,
so that $\Irr_{p'}(P)=\Irr(P/P')$. But
$$|\Irr(P/P')|=|\Irr(P_1/P_1')|\times|\Irr(P_2/P_2')|\times\cdots,$$
and $|\Irr(P_i/P_i')|=|P_i/P_i'|=2^{k_i}$, so indeed we find
$|\Irr_{2'}(P)|=2^{k_1+k_2+\ldots}=|\Irr_{2'}(G)|$ as predicted by McKay's
Conjecture~\ref{conj:McK}.
\end{exa}

While McKay's conjecture is still open, various refinements and extensions
have been proposed; as one example let us mention \cite{IN02}:

\begin{conj}[Isaacs--Navarro (2002)]   \label{conj:IN}
 In the situation of Conjecture~\ref{conj:McK} there exists a bijection
 $\Omega:\Irr_{p'}(G)\rightarrow\Irr_{p'}(N_G(P))$ such that
 $\Omega(\chi)(1)\equiv\pm\chi(1)\pmod p$.
\end{conj}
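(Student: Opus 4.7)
The plan is to mirror the two-step strategy that has proved successful for the plain McKay Conjecture~\ref{conj:McK}. First, I would formulate an ``inductive Isaacs--Navarro condition'' attached to each non-abelian finite simple group $S$, strengthening the bijection statement by demanding an $\Aut(S)$-equivariant $\Omega_S$ that preserves character degrees modulo $p$ up to sign, and which is compatible with central extensions of $S$ by $p'$-groups in a suitably controlled way. Second, I would prove a reduction theorem asserting that Conjecture~\ref{conj:IN} holds for every finite group $G$ at $p$ once this inductive condition is verified for every non-abelian simple group involved in $G$. Finally, one would verify the inductive condition for all finite simple groups via the classification.

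For the reduction step, I would follow the blueprint developed for the plain McKay conjecture: take a minimal counterexample $G$ and use Clifford theory along a chief series, reducing the problem to the action on the $p'$-characters of the simple normal sections. The novelty compared with plain McKay is that at each stage one must track the residue of the degree modulo $p$, not merely its non-vanishing. Since the degrees that enter as multiplicative factors under induction, restriction and inflation (ramification indices, indices of stabilisers) are coprime to $p$ in the $p'$-degree setting, the congruence $\Omega(\chi)(1)\equiv\pm\chi(1)\pmod p$ should propagate, but the bookkeeping is heavier than for the non-refined statement.

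For the verification step, $\fS_n$ and $\fA_n$ can be treated using the hook-length formula in the spirit of Example~\ref{exa:Sn}: Macdonald's combinatorial description of $p'$-characters via $p$-cores already gives a natural bijection to $\Irr_{p'}(N_G(P))$, and one checks the degree residue directly from the hook product. Sporadic groups are handled by inspection of known character tables. Groups of Lie type split into the defining-characteristic case, where Steinberg's tensor product theorem and the restricted-weight description of $p'$-characters make the sign refinement tractable, and the cross-characteristic case, where one would combine Lusztig's Jordan decomposition of characters with the $e$-Harish-Chandra machinery, building on the McKay bijections already constructed by Malle and Sp\"ath.

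The main obstacle will be the cross-characteristic case for classical groups of type $B$, $C$, $D$ when $p$ divides $q^a-1$ for small $a$. Here the $p'$-degree characters live in large $e$-Harish-Chandra series, their degrees are given by polynomials in $q$ whose residues modulo $p$ are delicate to control, and the Jordan decomposition introduces sign ambiguities that must be aligned consistently with those on the side of $N_G(P)$. A secondary difficulty is ensuring that the bijections can simultaneously be chosen $\Aut(S)$-equivariant and degree-residue preserving; the Galois-equivariant refinements that would automatically produce the sign condition are presently only available in limited generality.
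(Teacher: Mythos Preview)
The statement you are attempting to prove is not a theorem in the paper but an open \emph{conjecture}. The paper does not offer any proof of Conjecture~\ref{conj:IN}; it only records partial results: Fong for $\fS_n$, Turull for solvable groups, Nath for alternating groups, and later (Theorem~\ref{thm:MS10general}) the existence of a degree-congruence-preserving bijection for most simply connected groups of Lie type at non-defining primes. There is therefore nothing in the paper to compare your argument against.

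Your outline is a reasonable \emph{strategy} --- and indeed it parallels the Isaacs--Malle--Navarro reduction for plain McKay sketched after Theorem~\ref{thm:IMN} --- but it is a proposal for attacking an open problem, not a proof. The genuine gaps are precisely the ones you flag yourself: no reduction theorem of the type you describe for the Isaacs--Navarro refinement currently exists in the literature (the existing reductions handle McKay, Alperin--McKay, AWC, and Dade, not this congruence refinement), and even if one did, the inductive verification for groups of Lie type in cross characteristic is incomplete already for plain McKay (types $B_n$, $D_n$, $E_6$, $E_7$ remain open, as the paper notes after Theorem~\ref{thm:crit}). So your ``finally, one would verify the inductive condition for all finite simple groups'' conceals work that is not presently available; the proposal should be read as a programme, not a proof.
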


Paul Fong \cite{F03} showed that this refinement holds for $G=\fS_n$ and all
primes (note that it is stronger than the original McKay conjecture only when
$p\ge5$). Alexandre Turull \cite{Tu06} showed that it holds for all
solvable groups. This was shown to hold for alternating groups by Nath
\cite{N09}

Another such refinement, which is currently being  studied quite intensely, was
proposed by Navarro \cite{Na04}; it proposes that the bijection $\Omega$ should
also be equivariant with respect to $\Gal(\overline{\QQ}_p/\QQ_p)$; see
Brunat and Nath \cite{BN18} for the case of alternating groups, and
Ruhstorfer \cite{Ru18} for groups of Lie type in their defining characteristic.

\subsection{The local-global conjectures}   \label{sec:lg}
The McKay conjecture is concerned with the characters of $p'$-degree.
Now what about characters of degree divisible by $p$? How to relate these to
local data? There is a natural extension of McKay's conjecture, but in order to
formulate this, we need to introduce $p$-blocks. Let $\cO\ge\ZZ_p$ be a big
enough extension, for example containing all $|G|$th roots of unity, and
decompose the group ring of $G$ over $\cO$ into a direct sum of minimal 2-sided
ideals
$$\cO G= B_1\oplus\ldots\oplus B_r,$$
called the \emph{$p$-blocks of $G$}. It is easily seen that this induces a
partition
$$\Irr(G)=\Irr(B_1)\sqcup\ldots\sqcup \Irr(B_r),$$
by decreeing that $\chi\in\Irr(G)$ lies in $\Irr(B_i)$ if and only if
$\chi|_{B_i}\ne0$.
This block subdivision can in fact be read off from the character table of $G$:
$\chi,\psi\in\Irr(G)$ lie in the same $p$-block if and only if
$$\frac{|x^G|\chi(x)}{\chi(1)}\equiv \frac{|x^G|\psi(x)}{\psi(1)}\pmod\fP
  \qquad\text{for all $x\in G$},$$
where $\fP\unlhd\cO$ is the maximal ideal containing $p$.

Richard Brauer showed how to associate to any $p$-block $B$ of $G$ a
$p$-subgroup $D\le G$ of $G$, unique up to conjugation, called \emph{defect
group} of $B$. This can be defined as follows: $D$ is minimal amongst
$p$-subgroups $P$ of $G$ for which there exists a $p'$-element $x$ of $G$ such
that $P$ is a Sylow $p$-subgroup of $C_G(x)$ and 
$$\frac{|x^G|\chi(x)}{\chi(1)}\not \equiv 0 \pmod\fP \qquad
  \text{for all  $\chi \in \Irr(B)$}. $$
Brauer also  constructed a block $b$ of $N_G(D)$ called \emph{Brauer
correspondent} of $B$. The Brauer correspondent of $B$ in $N_G(D)$ is the
unique $p$-block $b$ of $N_G(D)$ with defect group $D$ such that 
$$ \frac{|x^G|\chi(x)}{\chi(1)}\equiv\frac{|x^G|\theta^G(x)}{\theta(1)}\pmod\fP
  \qquad\text{for all $x\in G$, $\chi\in\Irr(B)$ and $\theta\in\Irr(b)$}.$$  

\begin{exa}\label{exa:first}
(a) Let $G$ be a $p$-group. Then $\cO G$ is a single block, with defect group
$D=G$ maximal possible. \par
(b) Let $\chi\in\Irr(G)$ with $\chi(1)_p=|G|_p$, then the corresponding block
$B$ of $G$ has $\Irr(B)=\{\chi\}$ and is called \emph{of defect zero}. Here
$D=1$. For example, if $p$ does not divide $|G|$, then every $p$-block of $G$
is of defect zero. All other blocks contain at least two characters.
For $G=\fS_n$, it is clear from the hook formula in Example~\ref{exa:Sn} that
$\chi^\la$ is of defect zero if and only if $\lambda$ has no
$p$-hook, that is, if and only if $\lambda$ is a \emph{$p$-core}.  \par
(c) The block $B_0$ of $G$ containing the trivial character $1_G$ is called the
principal block of $G$. It always has defect group $D\in\Syl_p(G)$.
\end{exa}

With blocks now at our disposal, McKay's Conjecture~\ref{conj:McK} can be
naturally refined and generalised as follows (see \cite{Al76}):

\begin{conj}[Alperin--McKay (1976)]   \label{conj:AM}
 Let $B$ be a $p$-block of $G$ with defect group $D$ and Brauer correspondent
 $b$ in $N_G(D)$. Then
 $$|\Irr_0(B)|=|\Irr_0(b)|,$$
 where $\Irr_0(B):=\{\chi\in\Irr(B)\mid \chi(1)_p=|G:D|_p\}$.
\end{conj}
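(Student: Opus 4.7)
The plan is to attack Conjecture~\ref{conj:AM} via the classification of finite simple groups and a reduction-to-simple-groups strategy, rather than trying to find a direct proof for all finite $G$ at once (no such approach is currently known). So first I would try to formulate a strengthened ``inductive Alperin--McKay condition'' (iAM) attached to the blocks of each quasi-simple group $S$. Roughly, for a block $B$ of a quasi-simple group $S$ with defect group $D$ and Brauer correspondent $b$, this condition should demand a bijection $\Omega_B\colon\Irr_0(B)\to\Irr_0(b)$ that is equivariant under both the automorphism group $\Aut(S)_B$ stabilising $B$ and the action of $Z(S)$, and that respects Clifford-theoretic data (extendibility to inertia groups, choice of projective representations, cohomology classes). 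The design of iAM is the delicate part: it must be strong enough to pass through central extensions and extensions by outer automorphisms, yet plausible enough to be verified group by group.

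Once iAM is formulated, the first main step is a \textbf{reduction theorem}: if every non-abelian simple group satisfies iAM for the prime $p$, then Alperin--McKay holds for every finite group at $p$. I would carry this out by taking a minimal counterexample $G$ and analysing a minimal normal subgroup $N\trianglelefteq G$; using Fong--Reynolds to pass to block stabilisers, Dade's correspondence to pass to quotients by $p'$-normal subgroups, and then standard Clifford theory to cover characters in $\Irr_0(B)$ by characters of blocks of $N$. The components of $N$ are then quasi-simple, and iAM is designed precisely so that the bijections on the factors can be multiplied and made equivariant under the induced action of $G/N$, yielding a bijection on $G$ contradicting minimality.

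The second main step is the case-by-case \textbf{verification of iAM} for the simple groups: cyclic groups are trivial, sporadic groups reduce to explicit computation in GAP/character tables, alternating groups are handled via the combinatorics of $p$-hooks and $p$-cores (building on Example~\ref{exa:Sn}), and groups of Lie type split into defining characteristic (where Sylow $p$-subgroups are unipotent radicals and blocks are well understood) and cross characteristic (requiring Brou\'e's abelian defect conjecture techniques, $d$-Harish-Chandra theory, and Bonnaf\'e--Rouquier-type Morita equivalences).

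The hard part, by a wide margin, will be the cross-characteristic case for groups of Lie type, especially for $p=2$ or $p=3$ and for the exceptional groups, where the equivariance under diagonal and graph-field automorphisms is not automatic. Producing the bijection is one thing; making it $\Aut(S)_B$-equivariant together with the required cohomological refinements, uniformly across all primes and all blocks (not just unipotent or quasi-isolated ones), is where the obstructions concentrate, and is the reason the conjecture remains open in full generality despite substantial progress.
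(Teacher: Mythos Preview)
The statement is a \emph{conjecture}, not a theorem, and the paper does not prove it; it presents Conjecture~\ref{conj:AM} as open and then surveys the reduction-to-simple-groups strategy (Sp\"ath's reduction theorem making the conjecture conditional on every simple group being ``AMcK good'', followed by case-by-case verification for alternating, sporadic, and Lie-type groups, with the cross-characteristic equivariance and extension problems identified as the remaining obstructions). Your proposal is not a proof but an outline of exactly this same strategy, and you correctly acknowledge at the end that the programme is incomplete---so your write-up is an accurate summary of the state of the art as presented in the paper, but it is not (and does not claim to be) a proof of the conjecture.
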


The characters in $\Irr_0(B)$ are called \emph{characters of height zero}.
Let us point out that for blocks with defect group $D\in\Syl_p(G)$ we have
$$\Irr_0(B)=\Irr(B)\cap\Irr_{p'}(G),$$
i.e., $\chi\in\Irr(B)$ lies in $\Irr_0(B)$ if and only if $\chi\in\Irr_{p'}(G)$.
It follows that the Alperin--McKay conjecture implies the McKay conjecture,
by just summing over all blocks of full defect.

Again, the Alperin--McKay conjecture gives a local answer to a global question.
It has been proved for all $p$-solvable groups by Okuyama--Wajima and Dade in
1980 \cite{D80,OW80}, for the symmetric groups, the alternating groups and
their covering groups by Olsson \cite{Ol76} and Michler--Olsson \cite{MO90}. 

In view of this conjecture it is of interest to know when it will provide
information on all of $\Irr(B)$, that is, when all characters in $\Irr(B)$ are
of height~0. This is the subject of another even older conjecture by
Brauer \cite{Br55}:

\begin{conj}[Brauer (1955)]   \label{conj:BHZ}
 Let $B$ be a block with defect group $D$. Then
 $$\Irr(B)=\Irr_0(B)\quad\Longleftrightarrow\quad\text{ $D$ is abelian}.$$
\end{conj}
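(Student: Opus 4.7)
The plan is to prove the two implications separately, since they have quite different flavours: the ``only if'' direction admits a direct attack via a reduction to quasi-simple groups using CFSG, while the ``if'' direction reduces to the Alperin--McKay Conjecture~\ref{conj:AM}.

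For the ``only if'' direction, I would argue the contrapositive: if $D$ is non-abelian then $\Irr(B)$ contains a character of positive height. First, via a reduction of Berger--Kn\"orr--Murai type (using Fong--Reynolds, blocks of central extensions, and central products) one reduces to the case where $G$ is quasi-simple with $B$ a faithful block and $D$ non-abelian. The Classification then allows a case-by-case verification. For $G$ sporadic, direct inspection of the known character tables suffices. For $G$ symmetric or alternating, one exhibits explicit characters of positive height in any block with non-abelian defect using the partition combinatorics of $p$-cores and $p$-quotients (cf.\ the hook formula of Example~\ref{exa:Sn}). For $G$ of Lie type in defining characteristic the defect is either zero or full, and the assertion follows immediately from the degrees of unipotent characters. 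The decisive case is $G$ of Lie type in characteristic different from $p$: here one invokes the Cabanes--Enguehard parametrisation of $p$-blocks by $e$-cuspidal pairs $(\bL,\lambda)$, with $e$ the multiplicative order of $q$ modulo $p$; the defect group $D$ is controlled by the Sylow $\Phi_e$-torus inside the centraliser of a semisimple $p'$-element, and its non-abelianness forces non-trivial structure in the relative Weyl group $W_\bG(\bL,\lambda)$. One then exhibits a constituent of $\RLG(\lambda)$ whose degree has $p$-part strictly larger than $|G:D|_p$, by reading height information off the $d$-Harish-Chandra degree formula.

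For the ``if'' direction, assume $D$ is abelian and let $b$ be the Brauer correspondent of $B$ in $N_G(D)$. Since $D$ is normal and abelian in $N_G(D)$, a standard Clifford-theoretic argument (all irreducibles of $D$ being linear) yields $\Irr(b)=\Irr_0(b)$. A reduction due to Murai then shows that, assuming the Alperin--McKay Conjecture~\ref{conj:AM}, the equality $\Irr(B)=\Irr_0(B)$ follows: the essential input is AM applied to $B$ and $b$, combined with a careful analysis of $p$-heights of characters in blocks of subgroups between $N_G(D)$ and $G$ that cover~$b$.

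The main obstacle is clearly the dependence of the ``if'' direction on Alperin--McKay, which remains open in general. By work of Sp\"ath, Alperin--McKay has been reduced to an inductive condition (iAM) that must be verified for each non-abelian finite simple group. The hardest family is the groups of Lie type in non-defining characteristic, where one must construct an $\Aut(G)$-equivariant bijection $\Irr_0(B)\to\Irr_0(b)$ that is additionally compatible with the action of $\Gal(\overline{\QQ}_p/\QQ_p)$ and preserves central characters. Producing such bijections requires deep input from Jordan decomposition of characters, $d$-Harish-Chandra theory, and Dade's ramification theory for extensions to inertia subgroups in $N_G(D)$ --- precisely the block theory of finite reductive groups surveyed in these notes.
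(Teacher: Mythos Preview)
You have the two implications swapped relative to the paper's account, and this is not a cosmetic issue.

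First, note that the statement is a \emph{conjecture} in the paper; there is no proof to compare against. The paper instead surveys its status (Theorems~\ref{thm:BK}, \ref{thm:KM13} and the discussion after Theorem~\ref{thm:IMN}).

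According to that survey, the ``if'' direction ($D$ abelian $\Rightarrow$ $\Irr(B)=\Irr_0(B)$) is the one that admits a clean Berger--Kn\"orr reduction to quasi-simple groups (Theorem~\ref{thm:BK}), and it is this direction that has been proved unconditionally by verifying the quasi-simple case (Theorem~\ref{thm:KM13}). Your proposal attaches the Berger--Kn\"orr-type reduction and the quasi-simple case analysis to the ``only if'' direction, and makes the ``if'' direction conditional on Alperin--McKay. That is backwards. In particular, the ``if'' direction is \emph{not} open and does not depend on Conjecture~\ref{conj:AM}; your Murai-style reduction of it to AM is not how the matter was settled, and in any case AM alone gives only $|\Irr_0(B)|=|\Irr_0(b)|$, which by itself does not yield $|\Irr(B)|=|\Irr_0(B)|$.

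Conversely, the ``only if'' direction ($\Irr(B)=\Irr_0(B)\Rightarrow D$ abelian) is the one still open. The Navarro--Sp\"ath reduction for this direction requires \emph{both} that the statement hold for quasi-simple groups (done in \cite{KM17}) \emph{and} that all simple groups satisfy the inductive Alperin--McKay condition. Your plan of reducing ``only if'' directly to quasi-simple groups and then doing a case-by-case verification misses the iAM ingredient entirely; without it the reduction does not go through. So the genuine obstruction --- dependence on the unresolved inductive AM condition --- sits on the opposite side from where you placed it.
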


A consequence of this so-called Brauer's Height Zero Conjecture would be an
easy criterion to decide from the character table of a finite group whether
its Sylow
$p$-subgroups are abelian: indeed, any Sylow $p$-subgroup $D\in\Syl_p(G)$ is a
defect group of the principal block $B_0$, and both $\Irr(B_0)$ and
$\Irr_0(B_0)$ are encoded in the character table.

Conjecture~\ref{conj:BHZ} has been proved for $p$-solvable groups by Gluck and
Wolf \cite{GW84}, and for 2-blocks with defect group $D\in\Syl_2(G)$ much more
recently by Navarro and Tiep \cite{NT12} using, among other ingredients,
Walter's classification of groups with abelian Sylow 2-subgroup.

Let us introduce a further fundamental conjecture in this subject. This
purports to count irreducible characters in positive characteristic. For this
let $\IBr(G)$ denote the set of irreducible \emph{$p$-Brauer characters of $G$};
these are lifts to characteristic zero, constructed by Brauer, of the trace
functions of irreducible representations $G\rightarrow\GL_n(\overline{\FF}_p)$.
Again these are partitioned according to the $p$-blocks of $G$, so that
$\IBr(G)=\IBr(B_1)\sqcup\ldots\sqcup\IBr(B_r)$.
A \emph{weight} of $G$ is a pair $(Q,\psi)$ consisting of a $p$-subgroup
$Q\le G$ and an irreducible character $\psi\in\Irr(N_G(Q)/Q)$ of defect zero.
Clearly, $G$ acts on its set of weights by conjugation.
Any weight is naturally attached to a well-defined $p$-block of $G$. Then the
Alperin Weight Conjecture \cite{Al87} proposes:

\begin{conj}[Alperin (1986)]   \label{conj:AWC}
 Let $B$ be a block of $G$. Then
 $$|\IBr(B)|=|\{\text{weights of $G$ attached to $B$}\} \sim_G|.$$
\end{conj}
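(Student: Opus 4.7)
The plan is to attack Conjecture~\ref{conj:AWC} by first establishing a reduction theorem in the spirit of the Isaacs--Malle--Navarro reduction of the McKay conjecture. Concretely, I would formulate an \emph{inductive Alperin weight condition} on finite simple groups that is strong enough that its verification for every non-abelian simple composition factor of $G$ implies the AWC for $G$ itself. The ingredients are Clifford theory for blocks (to pass between a quasi-simple central extension and its almost simple extensions), compatibility with the Fong--Reynolds correspondence (to reduce to quasi-primitive situations), and, most crucially, an $\Aut(S)$-equivariant bijection between $\IBr(B)$ and the $S$-classes of weights in $B$ that behaves well under central quotients. Once such a reduction is in place, the conjecture becomes a checkable statement: verify the inductive condition for each simple group coming from the classification.

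Next I would dispose of the easier families. For $p$-solvable groups the conjecture itself is already known from work of Okuyama via Glauberman and Fong--Reynolds, which will handle large swathes of quasi-simple pieces automatically. The sporadic groups can be done by a finite case-by-case analysis using character table data and explicit descriptions of weights in $N_G(Q)/Q$ of defect zero. For the alternating and symmetric groups one indexes $\IBr$ by $p$-regular partitions and weights by pairs consisting of a $p$-core and a $p$-quotient, making the bijection combinatorial; the compatibility with diagonal and the single outer automorphism of $\fS_n$ can then be tracked explicitly.

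The bulk of the effort, and the place where I expect the genuine obstruction, concerns the finite groups of Lie type. In defining characteristic the approach is comparatively clean: irreducible modular representations are parametrised by restricted highest weights via Steinberg's tensor product theorem, and weights in Alperin's sense correspond naturally to characters of maximal parabolic Levi complements, so the bijection can be written down directly and then checked to be equivariant. The hard part is the non-defining characteristic case, where both sides of the conjecture must be parametrised using Deligne--Lusztig theory and $e$-Harish-Chandra series, and one has to match them in a way that simultaneously respects diagonal, field and graph automorphisms; compatibility with the \emph{outer} action, rather than the mere existence of a bijection, is the stubborn obstacle and the main reason the inductive AWC remains open in general.
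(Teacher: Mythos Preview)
The statement you are addressing is a \emph{conjecture}, not a theorem: the paper does not prove Conjecture~\ref{conj:AWC}, and indeed no proof is known. There is therefore no ``paper's own proof'' to compare your proposal against. What the paper does is precisely what you outline: it records the Navarro--Tiep and Sp\"ath reduction theorems showing that AWC holds if all finite simple groups satisfy an inductive Alperin weight condition, it notes the cases ($p$-solvable, sporadic, alternating, Lie type in defining characteristic) where this has been verified, and it identifies the non-defining characteristic case --- in particular the equivariance with respect to outer automorphisms --- as the outstanding obstacle.

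Your proposal is an accurate summary of that programme, and you correctly flag where the genuine difficulty lies. But it is a \emph{strategy}, not a proof: you explicitly concede that ``the inductive AWC remains open in general'', which means the argument is incomplete by your own account. If the task was to prove the conjecture, the gap is simply that the hard case (groups of Lie type in non-defining characteristic, with full control of the $\Aut(S)$-action on both $\IBr(B)$ and the weights) is not done --- and nobody knows how to do it yet. If the task was to describe the state of the art, your write-up is essentially correct and aligns with the paper's own discussion.
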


So, $|\IBr(B)|$ should be determined locally, or more precisely this is the
case whenever $B$ is not of defect zero, since for blocks $B$ of defect zero,
with $\Irr(B)=\{\chi\}$ (see Example~\ref{exa:first}(b)), the corresponding
weight is just $(1,\chi)$. A proof of the Alperin Weight
Conjecture~\ref{conj:AWC} for solvable groups was given by Okuyama \cite{Ok81},
for $p$-solvable groups by Isaacs and Navarro \cite{IN95},
for $\GL_n(q)$ and $\fS_n$ by Alperin and Fong \cite{AF90}, and for groups of
Lie type when $p$ is their defining prime by Cabanes \cite{Ca88}. For blocks
with abelian defect groups (and hence in particular for groups with abelian
Sylow $p$-subgroups), the weight conjecture has the following nice
consequence:

\begin{thm}[Alperin (1986)]   \label{thm:Alperin}
 Let $B$ be a block with abelian defect groups satisfying the Alperin weight
 conjecture, and $b$ its Brauer correspondent. Then:
 $$|\Irr(B)|=|\Irr(b)|\quad\text{and}\quad |\IBr(B)|=|\IBr(b)|.$$
\end{thm}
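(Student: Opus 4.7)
The plan is to invoke AWC for $B$ and $b$ in parallel, match their weights via the abelian structure of $D$, and then lift from $\IBr$ to $\Irr$ via Brauer's second main theorem applied subsection by subsection.

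First I would analyse weights of $G$ attached to $B$. If $(Q,\psi)$ is such a weight, then $\psi$ has defect zero in $N_G(Q)/Q$, so its inflation to $N_G(Q)$ lies in a block with defect group exactly $Q$; since this block is the Brauer correspondent of $B$, Brauer's first main theorem forces $Q$ to be $G$-conjugate to $D$. Taking $Q=D$, the stabiliser is $N_G(D)$, which acts trivially on $\Irr(N_G(D)/D)$ via inner automorphisms, so the $G$-orbits of such weights are parametrised by
$$\cH := \{\psi\in\Irr(N_G(D)/D) \mid \psi(1)_p = |N_G(D)/D|_p,\ \Infl\psi\in\Irr(b)\}.$$
The same analysis applied to $b$ inside $N_G(D)$ (where $b$ is its own Brauer correspondent) returns the identical set $\cH$, so AWC for $B$ and for $b$ yields $|\IBr(B)| = |\cH| = |\IBr(b)|$. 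AWC for $b$ is unproblematic since its defect group is normal.

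Next I would apply Brauer's second main theorem,
$$|\Irr(B)|=\sum_{(x,b_x)/G}|\IBr(b_x)|\quad\text{and}\quad|\Irr(b)|=\sum_{(x,c_x)/N_G(D)}|\IBr(c_x)|,$$
the sums running over conjugacy classes of $B$- and $b$-subsections respectively. Since $D$ is abelian, every $B$-subsection is conjugate to one $(x,b_x)$ with $x\in D$; then $b_x$ is a block of $C_G(x)\supseteq D$ of defect $D$ whose Brauer correspondent in $N_{C_G(x)}(D)=C_{N_G(D)}(x)$ is $c_x$, and Alperin's fusion theorem for abelian defect gives a bijection between the two sets of subsection classes. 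Applying the argument of the previous paragraph inductively to each Brauer pair $(b_x,c_x)$ of blocks with abelian defect $D$ yields $|\IBr(b_x)|=|\IBr(c_x)|$, and summation gives $|\Irr(B)|=|\Irr(b)|$. The hard part will be precisely this inductive step: AWC is needed for every subsection block $b_x$, not only for $B$, so one must either read the hypothesis as AWC for all blocks of abelian defect appearing in the relevant centralisers, or else run the whole argument as an induction on $|G|$ with AWC assumed for all proper subsections.
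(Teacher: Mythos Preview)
The paper states this as Alperin's theorem without proof, so there is no argument in the paper to compare against. Your overall architecture---show that every $B$-weight has first component conjugate to $D$, identify the resulting weights with those of $b$, and then pass from $|\IBr|$ to $|\Irr|$ via the subsection count coming from Brauer's second main theorem together with control of fusion in the abelian defect case---is the standard argument and is essentially how Alperin proved it.

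There is, however, a genuine gap in your first step. You assert that the block $\beta$ of $N_G(Q)$ containing $\Infl\psi$ ``is the Brauer correspondent of $B$'' and that Brauer's first main theorem then forces $Q\sim_G D$. This is circular: the Brauer correspondent of $B$ is by definition a block of $N_G(D)$, and the first main theorem concerns $N_G(D)$ only. All you actually know is that $\beta^G=B$ and $\beta$ has defect group $Q$; block induction then gives $Q\le_G D$, not equality. The missing ingredient is \emph{centricity} of weight subgroups. Let $e$ be the block of $C_G(Q)$ covered by $\beta$; since $\beta$ has defect group $Q$ and $C_G(Q)\unlhd N_G(Q)$, the block $e$ has defect group $Q\cap C_G(Q)=Z(Q)$. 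Now $(Q,e)$ is a $B$-Brauer pair, so after conjugation $(Q,e)\le(D,e_D)$ for a maximal pair; as $D$ is abelian this is a normal inclusion and $D\le C_G(Q)$. Then $\Br_D(e)e_D\ne0$ shows $(D,e_D)$ is an $e$-Brauer pair in $C_G(Q)$, whence $D$ lies in a defect group of $e$, i.e.\ $D\le Z(Q)$. Combined with $Q\le D$ this gives $Q=D$.

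Your closing caveat is accurate and important: the inductive step does require AWC for each subsection block $b_x$, not only for $B$. The theorem is to be read with that stronger hypothesis (or as a consequence of AWC for all blocks with abelian defect in the relevant subgroups), which is how the result is normally understood.
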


Kn\"orr and Robinson \cite{KR89} have given reformulations of
Conjecture~\ref{conj:AWC} in terms of chains of $p$-subgroups of $G$. They also
showed the following connection between the conjectures introduced above:

\begin{thm}[Kn\"orr--Robinson (1989)]   \label{thm:KR}
 The following are equivalent for a prime $p$:
 \begin{enumerate}
  \item[\rm(i)] The Alperin--McKay Conjecture~\ref{conj:AM} holds for all
   $p$-blocks with abelian defect;
  \item[\rm(ii)] the Alperin Weight Conjecture~\ref{conj:AWC} holds for all
     $p$-blocks with abelian defect.
 \end{enumerate}
\end{thm}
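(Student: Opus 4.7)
The plan is to apply Kn\"orr--Robinson's reformulation of the Alperin weight conjecture as an alternating sum over chains of $p$-subgroups \cite{KR89}, together with an analogous reformulation for Alperin--McKay in terms of height-zero characters, and then to exploit the abelian defect hypothesis to match the two resulting alternating sums. More precisely, for a chain $\sigma\colon 1=Q_0<Q_1<\cdots<Q_n$ of $p$-subgroups of $G$, let $N_G(\sigma)=\bigcap_{i}N_G(Q_i)$, and for a $p$-block $B$ of $G$ let $l(N_G(\sigma)\mid B)$ count the irreducible Brauer characters in blocks $b_\sigma$ of $N_G(\sigma)$ with Brauer induced block $(b_\sigma)^G=B$. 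Then AWC for $B$ is equivalent to
$$\sum_{\sigma}(-1)^{|\sigma|}\,l(N_G(\sigma)\mid B)=0,$$
the sum being over $G$-orbits of chains. A parallel identity
$$\sum_{\sigma}(-1)^{|\sigma|}\,k_0(N_G(\sigma)\mid B)=0$$
with $k_0$ in place of $l$ serves as an equivalent form of Alperin--McKay for $B$, provided Alperin--McKay is already known for all blocks of proper subquotients.

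I would then set up an induction on $|G|$. For any non-empty chain $\sigma$, every block $b_\sigma$ contributing to either sum has defect group contained in a $G$-conjugate of $D$, hence abelian, and $N_G(\sigma)$ is a proper subgroup of $G$. By induction, both conjectures are available in these smaller groups, so after splitting off the contribution of the trivial chain both alternating sums reduce to a single equation relating $l(B)$, respectively $k_0(B)$, to the corresponding datum of the Brauer correspondent $b$ in $N_G(D)$ (arising from the single chain $1<D$).

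Finally, one matches these two reduced identities. Since $b$ has normal abelian defect group $D$ in $N_G(D)$, classical Clifford theory yields $k(b)=k_0(b)$, so the McKay-type datum on the $N_G(D)$ side coincides with the weight-count attached to $b$; combined with Alperin's Theorem~\ref{thm:Alperin} (whose conclusion follows from AWC) this converts the $l$-identity into the desired $k_0$-identity, and vice versa. The main obstacle I anticipate is the careful combinatorial bookkeeping required to pass from the full chain sums to the reduced form, verifying that the Brauer correspondence $\sigma\mapsto b_\sigma$ is well-behaved along every chain and that, modulo the inductive hypothesis, the two alternating sums really do encode equivalent information. It is precisely here that the abelian hypothesis on $D$ is essential: it forces every non-trivial chain of $p$-subgroups relevant to $B$ to be $G$-conjugate into $D$, rendering the local structure transparent enough for the matching to go through.
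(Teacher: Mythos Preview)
The paper does not actually give a proof of this theorem; it is a survey, and Theorem~\ref{thm:KR} is simply stated with the attribution and reference to \cite{KR89}. So there is no ``paper's own proof'' to compare against, and I can only comment on the internal soundness of your sketch and on how it relates to the original Kn\"orr--Robinson argument.

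Your overall strategy---use the Kn\"orr--Robinson chain reformulation of the weight conjecture and compare it to an analogous alternating sum built from height-zero counts---is indeed the shape of the argument in \cite{KR89}. However, several of the connecting steps in your proposal do not go through as written.

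First, the assertion that $\sum_\sigma(-1)^{|\sigma|}k_0(N_G(\sigma)\mid B)=0$ is an equivalent form of Alperin--McKay for $B$ is not a standard fact and is not what \cite{KR89} prove; you would need to establish this, and it is not clear how. What \emph{is} unconditionally true is the corresponding identity with $k$ in place of $k_0$ (an Euler-characteristic/contractibility argument), and the passage from $k$ to $k_0$ term by term is exactly where the abelian hypothesis enters---but for the term at the empty chain this would require $k(B)=k_0(B)$, i.e., one direction of Brauer's height zero conjecture, which you are not assuming.

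Second, your reduction of both alternating sums to ``the trivial chain and the single chain $1<D$'' is not justified. Knowing both conjectures for the proper chain stabilisers $N_G(\sigma)$ does not make the remaining terms vanish, nor does it force $l(N_G(\sigma)\mid B)=k_0(N_G(\sigma)\mid B)$ chain by chain (for instance, already for the principal $3$-block of $\fS_3$ with normal abelian defect one has $k_0=3\ne 2=l$). So the ``matching'' of the two sums cannot be done termwise in the way you suggest.

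Third, your appeal to Theorem~\ref{thm:Alperin} in the final step is circular in one direction: that theorem \emph{assumes} the weight conjecture for $B$, so it cannot be invoked when you are trying to deduce (ii) from (i). It is legitimate only for the implication (ii)$\Rightarrow$(i).

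In short, you have identified the right machinery (the chain reformulation of \cite{KR89}) and the right place where abelianness is used, but the bridge between the $l$-sum and the $k_0$-sum is not the one you describe; getting that bridge right is the actual content of the Kn\"orr--Robinson proof, and your sketch does not yet supply it.
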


While all of the above conjectures are open in general, they have been shown to
hold for special types of defect groups. By results of Dade they hold whenever
the
defect group $D$ is cyclic, and by Sambale \cite{Sam14} when $D$ is metacyclic.

Let us mention some further directions which we shall not go into here:
several refinements of the above conjectures have been put forward, like the
Isaacs--Navarro Conjecture~\ref{conj:IN} introduced above. Further, Dade's
conjecture \cite{D92} from 1992 simultaneously generalises the Alperin--McKay
conjecture and the Alperin weight conjecture by making predictions on
characters of arbitrary height. A recent conjecture of Eaton and Moreto
\cite{EM14} extends Brauer's Height Zero Conjecture~\ref{conj:BHZ} to
characters of the first positive height.

\subsection{The reduction approach}
In recent years a new approach for tackling the local-global counting
conjectures has emerged: one tries to study a minimal counterexample by making
use of the classification of the finite simple groups. A first such reduction
in fact dates back quite a while \cite{BK}:

\begin{thm}[Berger--Kn\"orr (1988)]   \label{thm:BK}
 The ``if'' direction of Brauer's Height Zero Conjecture~\ref{conj:BHZ} holds
 if it holds for all blocks of all quasi-simple groups.
\end{thm}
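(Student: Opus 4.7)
The natural strategy is to argue by contradiction. Let $(G,B)$ be a counterexample with $|G|$ minimal, so $B$ has an abelian defect group $D$ yet some $\chi\in\Irr(B)$ has positive height. The aim is to exploit normal subgroup structure and Clifford theory to force $G$ to be quasi-simple, thereby invoking the hypothesis to reach a contradiction.

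First I would strip away the obvious obstructions. If $O_{p'}(G)\ne 1$, Fong's reduction produces a block $\bar B$ of a proper quotient of $G$ with defect group isomorphic to $D$ (hence still abelian) and a height-preserving bijection of characters; minimality then contradicts the existence of $\chi$. Analogously, using Dade's theory of blocks over central $p$-subgroups, any nontrivial central $p$-section on which $B$ is trivial can be quotiented out. After these steps we may assume $O_{p'}(G)=1$ and that $Z(G)_p\le D$ with $B$ having a faithful central character on $Z(G)_p$.

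Next, pick a minimal normal subgroup $N$ of $G$ and a constituent $\psi\in\Irr(N)$ of $\chi|_N$. If the inertia subgroup $T=I_G(\psi)$ is proper in $G$, the Fong--Reynolds correspondence yields a block $B'$ of $T$ whose defect group is $D$ and with a height-preserving bijection $\Irr(B')\leftrightarrow\Irr(B)$, again contradicting minimality. Hence $\psi$ is $G$-invariant, and the character triple $(G,N,\psi)$ is available. Using Dade's formulae relating the defect of $B$ to that of the covered block $b$ of $N$ via $|D\cap N|$, one argues that $N$ cannot be an abelian $p'$-group or a noncentral elementary abelian $p$-group (each such case would contradict minimality or the abelianness of $D$). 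Therefore $N$ must be a direct product of isomorphic nonabelian simple groups of order divisible by $p$, and a further Clifford-theoretic descent--again via Fong--Reynolds applied to the diagonal action on the simple factors--reduces $N$ to a single such factor. At that point $G$ is, modulo the central $p$-group $Z(G)_p\le D$, a quasi-simple group with abelian defect block $B$ and a positive-height character, contradicting the standing hypothesis.

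The main obstacle is the bookkeeping through these reductions: at \emph{every} intermediate step one must verify that the new defect group is still abelian (which is automatic here, since subgroups and quotients of abelian $p$-groups are abelian) and that the offending positive-height character $\chi$ has a positive-height image in the smaller block. The latter is the genuinely delicate point. It relies on Dade's precise formulae governing how heights and defects behave under block extension in a character triple, and on the fact, used throughout, that Fong--Reynolds induction preserves both defect groups and heights; without these exact compatibilities the induction would collapse.
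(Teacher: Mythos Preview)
The paper does not supply its own proof of this theorem; it is simply cited from Berger and Kn\"orr's original paper \cite{BK}. So there is no in-paper argument against which to compare your sketch.

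As to the sketch itself: the broad outline---minimal counterexample, Fong reduction to eliminate $O_{p'}(G)$, Fong--Reynolds to pass to inertial subgroups, reduction of the minimal normal subgroup $N$ to a single nonabelian simple factor---is the right shape for this kind of argument. But your final step contains a genuine gap. Having arranged that $G$ possesses a unique minimal normal subgroup $N$ which is nonabelian simple does \emph{not} force $G$ (or $G/Z(G)_p$) to be quasi-simple: the quotient $G/NC_G(N)$ may be a nontrivial subgroup of $\Out(N)$, as already for $G=\fS_5$, $N=\fA_5$. You therefore cannot invoke the hypothesis on quasi-simple groups directly for $G$, and the claimed contradiction does not materialise.

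What one actually does at that stage is pass to the block $b$ of $N$ covered by $B$ (or its pullback to the universal covering group of $N$, which \emph{is} quasi-simple). Its defect group $D\cap N$ is abelian, so by hypothesis every $\psi\in\Irr(b)$ has height zero. The substantive content of the Berger--Kn\"orr paper is then to deduce from this that every $\chi\in\Irr(B)$ has height zero: they establish a precise formula expressing the height of $\chi$ in terms of the height of a constituent $\psi$ of $\chi|_N$ together with a ``relative'' contribution coming from the character triple $(G,N,\psi)$, and run a further induction on $|G/N|$ to control the latter. Your sketch gestures at ``Dade's precise formulae'' but gives no indication of how the argument closes once $N$ is simple while $G$ is not; without that step the induction does not terminate.
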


Recall here that a finite group $G$ is \emph{quasi-simple} if $G$ is perfect
and moreover $G/Z(G)$ is simple.
It took 25 years until the necessary statement for quasi-simple groups could
finally be verified, thus giving:

\begin{thm}[Kessar--Malle (2013)]   \label{thm:KM13}
 The ``if'' direction of Brauer's Height Zero Conjecture~\ref{conj:BHZ} holds.
\end{thm}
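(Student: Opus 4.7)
By the Berger--Knörr reduction (Theorem~\ref{thm:BK}) it suffices to verify that whenever $B$ is an $\ell$-block of a quasi-simple group $G$ with abelian defect group $D$, one has $\Irr(B)=\Irr_0(B)$. I would invoke the classification of finite simple groups and treat each family of central covers separately.

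For sporadic simple groups and their covers, the statement is checked directly from the known character tables, using block and defect data available in GAP. For alternating groups $\fA_n$ and their double covers, the $\ell$-block distribution and heights are controlled by the combinatorics of $\ell$-cores and $\ell$-quotients of (bar) partitions, and the conclusion follows from results of Olsson and Michler--Olsson. For quasi-simple groups of Lie type in their defining characteristic $\ell=p$, the only $\ell$-blocks of positive defect are the principal ones, with defect group a Sylow $p$-subgroup; knowing when this is abelian restricts the group to a short list of small-rank cases, on which the conclusion can be verified directly using Cabanes' description of the representations in defining characteristic.

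The substantial work is for $\ell$-blocks of quasi-simple groups of Lie type $G=\bG^F$ in \emph{non-defining} characteristic $\ell\ne p$. Here my plan is first to use the Bonnafé--Rouquier Morita equivalence to reduce an arbitrary $\ell$-block $B$ to a quasi-isolated $\ell$-block $b$ of a Levi subgroup $\bL^F\le\bG^F$ with the same defect group, and then to use Jordan decomposition together with the Broué--Michel theorem to describe $\Irr(b)$ in terms of a unipotent $\ell$-block of the $F$-stable centraliser $C_{\bL^*}(s)$ of an $\ell'$-element $s\in\bL^{*F}$. The abelianness of $D$ translates, via the theory of $d$-Harish--Chandra series (with $d$ the multiplicative order of $q$ modulo $\ell$), into the statement that a Sylow $\Phi_d$-torus of the centraliser is already a maximal torus; in this situation Enguehard's classification of unipotent $\ell$-blocks, combined with explicit Lusztig degree formulas, should show that every character in the unipotent block has $\ell$-part of degree equal to $|G:D|_\ell$, and this information lifts back to the desired statement for $B$.

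The hardest part will be to make this scheme run uniformly for all types of $\bG$, particularly the exceptional types $E_6,E_7,E_8,F_4,G_2$ and for primes $\ell$ which are bad for $\bG$ or divide the order of the Weyl group; in those situations the Bonnafé--Rouquier reduction and the description of quasi-isolated blocks are delicate and will require explicit case-by-case analysis, supported in the exceptional groups by computer calculation. A second subtlety is that the Morita equivalences invoked must be upgraded to preserve character heights (not merely character counts), and one must track carefully how passage between $\bG^F$ and a chosen quasi-simple central cover of the simple group of Lie type affects the given $\ell$-block and its defect group.
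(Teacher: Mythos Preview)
Your plan is correct and coincides with the paper's own account: invoke the Berger--Kn\"orr reduction (Theorem~\ref{thm:BK}) and then verify the statement block by block over all quasi-simple groups, with the decisive new input being the determination of the quasi-isolated $\ell$-blocks of exceptional groups at bad primes. Two small corrections: Morita equivalences automatically preserve character heights (so your ``second subtlety'' is not one), and the abelian-defect criterion coming out of Theorem~\ref{thm:blocks}(e) is that $\ell\nmid|W_G(\bL,\la)|$ (equivalently $D=Z(L)_\ell$), not that a Sylow $\Phi_d$-torus of the centraliser is a maximal torus.
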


This result is the outcome of work of many mathematicians on determining all
$p$-blocks of all quasi-simple groups, the case of groups of Lie type
being by far the most challenging. Major contributions are due to
Fong--Srinivasan \cite{FS89}, Brou\'e--Malle--Michel \cite{BMM},
Cabanes--Enguehard \cite{CE99}, Blau--Ellers \cite{BE},
Bonnaf\'e--Rouquier \cite{BR03} and Enguehard \cite{E00}, before the final
case, the so-called quasi-isolated blocks of exceptional groups of Lie type at
bad primes was settled by Kessar and Malle \cite{KM13}.

About 15 years after Berger--Kn\"orr the issue of reductions of the
long-standing conjectures was again taken up by Gabriel Navarro, which led to
the following \cite{IMN}:

\begin{thm}[Isaacs--Malle--Navarro (2007)]   \label{thm:IMN}
 The McKay Conjecture~\ref{conj:McK} holds for a prime $p$ if all finite simple
 groups are McKay good for $p$.
\end{thm}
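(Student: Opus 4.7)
The strategy is a minimal counterexample argument combined with a Clifford-theoretic reduction. The definition of \emph{McKay good} (the inductive McKay condition) encapsulates precisely the equivariance and projective-representation properties of simple groups needed to push a McKay bijection through Clifford theory, so the work is to show that a minimal counterexample must sit above a normal subgroup involving non-abelian simple factors, and then to transport ``goodness'' of those factors up to $G$.

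Suppose, for contradiction, that the McKay conjecture fails at the prime $p$ and pick $G$ a counterexample of minimal order. Let $N$ be a minimal normal subgroup of $G$ and $P \in \Syl_p(G)$. If $N$ is abelian, I invoke the known $p$-solvable case together with minimality: when $N$ is a $p$-group we may arrange $N \le P$, and the identification $N_{G/N}(P/N) = N_G(P)/N$ allows the induction hypothesis to apply to $G/N$; when $N$ is a $p'$-group, we analyse $\Irr_{p'}(G)$ by restriction to $N$ and apply induction on the stabilisers of $\vhi \in \Irr(N)$, observing that these stabilisers are proper. In either sub-case minimality yields a contradiction, so $N = S_1 \times \cdots \times S_k$ is a direct product of pairwise isomorphic non-abelian simple groups, transitively permuted by $G$.

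Now I would analyse $\Irr_{p'}(G)$ via Clifford correspondence over $N$: it decomposes as a disjoint union, over $G$-orbit representatives $\vhi \in \Irr_{p'}(N)$, of characters induced from the inertia subgroups $G_\vhi$. The analogous decomposition holds for $\Irr_{p'}(N_G(P))$ above $N_N(P \cap N)$. The inductive McKay hypothesis for each $S_i$ provides, essentially by design, an $\Aut(S_i)$-equivariant bijection between the appropriate $p'$-character sets of $S_i$ and its Sylow normaliser which is realised as an isomorphism of character triples, carrying information on central characters and cohomology classes. Taking tensor products over the factors and twisting by the transitive $G$-action on the set $\{S_i\}$ yields an $N_G(P)$-equivariant bijection at the level of $N$ with the full character-triple compatibilities. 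Finally one lifts this to the desired bijection $\Irr_{p'}(G) \leftrightarrow \Irr_{p'}(N_G(P))$ via two applications of Clifford correspondence, using that the character-triple isomorphism matches extensions of $\vhi$ to $G_\vhi$ bijectively with extensions of its partner in the corresponding inertia subgroup inside $N_G(P)$. The principal obstacle, and the reason for the intricate formulation of McKay goodness in \cite{IMN}, is precisely this bookkeeping: equality of character counts at the level of the simple factors is far too weak, and one needs compatibility of Schur multipliers and projective extensions through the wreath-product structure of $N$ inside $G$ in order for the two-step Clifford reduction to assemble into a genuine bijection.
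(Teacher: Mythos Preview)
Your outline captures the broad shape of a Clifford-theoretic reduction, but there is a genuine gap in the abelian case that the paper's approach is specifically designed to circumvent. When your minimal normal subgroup $N$ is an abelian $p'$-group, the stabilisers $G_\vhi$ for $\vhi\in\Irr(N)$ need \emph{not} be proper: if $N\le Z(G)$ (which is exactly what happens once one has passed to character triples, and can happen already for a minimal normal subgroup) then $G_\vhi=G$ for every $\vhi$, and your induction stalls. Even when $G_\vhi<G$, what you need to count is $|\Irr_{p'}(G_\vhi\mid\vhi)|$ versus the analogous local quantity, and this is \emph{not} the McKay conjecture for $G_\vhi$; it is a relative statement about characters lying over a fixed $\vhi$. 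Invoking ``the known $p$-solvable case'' does not help, since $G$ itself is not assumed $p$-solvable.

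This is precisely why the paper does not attempt a direct minimal-counterexample argument on Conjecture~\ref{conj:McK}. Instead, the first and crucial step is to \emph{strengthen} the assertion to the Relative McKay Conjecture~\ref{conj:relMcK}: for $L\unlhd G$, $P/L\in\Syl_p(G/L)$ and $P$-invariant $\nu\in\Irr(L)$, one asks that $|\Irr_{p'}(G\mid\nu)|=|\Irr_{p'}(N_G(P)\mid\nu)|$. One then runs the minimal-counterexample argument on triples $(G,L,\nu)$ with respect to $|G/L|$, reducing successively to $\nu$ being $G$-invariant, then to $L\le Z(G)$ cyclic of $p'$-order with $\nu$ faithful (via character triples), then to $G/L$ having a unique minimal normal subgroup $K/L$ of order divisible by $p$ (here Wolf's theorem for the relative conjecture in $p$-solvable groups is used), and only then does one arrive at the situation where goodness of the simple factors of $K/L$ finishes the argument. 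Your final paragraph on how the inductive condition is consumed is morally right, but it only becomes applicable once the relative framework has been set up.
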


Here, the reduction is not as clean as for Brauer's height zero conjecture.
The condition of a simple group being McKay good is stronger and more
complicated than just asking that it satisfies the McKay conjecture. We say
that a simple group $S$ of order divisible by~$p$ is \emph{McKay good at $p$}
if the following conditions
hold, where $G$ denotes a \emph{universal covering group of $S$} (that is, $G$
is maximal with respect to being quasi-simple with simple quotient $S$):
\par
Fix $P\in\Syl_p(G)$. There exists a proper subgroup $M<G$ of $G$ with
$N_G(P)\le M$ such that
\begin{enumerate}
\item[(1)] there is a bijection $\Omega:\Irr_{p'}(G)\rightarrow\Irr_{p'}(M)$,
 such that
\item[(2)] $\Omega$ respects central characters, that is, if
 $\chi\in\Irr_{p'}(G)$ lies above $\nu\in\Irr(Z(G))$, then so does
 $\Omega(\chi)$ (note that $Z(G)\le N_G(P)\le M$);
\item[(3)] $\Omega$ is equivariant with respect to
 $\Aut(G)_P=\{\alpha\in\Aut(G)\mid \alpha(P)=P\}$; and
\item[(4)] the Clifford theories in $\Aut(G)_P$ above $\chi$ and $\Omega(\chi)$
 agree (for example, the corresponding 2-cocycles are the same).
\end{enumerate}

The last condition is often the most difficult to check, but it is satisfied
automatically for example if $\Out(G)$ is cyclic; the latter property
holds, for example, for sporadic simple groups, for alternating groups
$\fA_n$ with $n\ne6$, but also for the exceptional groups of Lie type $E_8(q)$.

Let us give some indications on how such a reduction theorem might be arrived
at. The first and crucial step is to generalise the desired assertion:

\begin{conj}[Relative McKay Conjecture]   \label{conj:relMcK}
 Let $G$ be a finite group, $L\unlhd G$, $P/L\in\Syl_p(G/L)$ and suppose that
 $\nu\in\Irr(L)$ is $P$-invariant. Then
 $$|\Irr_{p'}(G|\nu)|=|\Irr_{p'}(N_G(P)|\nu)|.$$
\end{conj}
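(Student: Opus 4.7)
The plan is to mirror the reduction strategy used by Isaacs--Malle--Navarro for Theorem~\ref{thm:IMN}, but carried out in the ``relative'' setting from the start so that the inductive hypothesis has the right shape. Accordingly, I would argue by induction on $|G|+|G:L|$ and pick a minimal counterexample $(G,L,\nu)$. The induction must be strong enough to absorb Clifford-theoretic passages, which is why one states the conjecture relative to an arbitrary normal $L$ and an arbitrary $P$-invariant $\nu$ rather than in the absolute form.

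First I would use Clifford theory to reduce to the case where $\nu$ is $G$-invariant. Let $T=G_\nu$ be the inertia group of $\nu$ in $G$. The Clifford correspondence gives a bijection $\Irr(G\mid\nu)\leftrightarrow\Irr(T\mid\nu)$, and since for $\chi\in\Irr(G\mid\nu)$ the index $[G:T]$ divides $\chi(1)/\nu(1)$, this bijection restricts to $\Irr_{p'}(G\mid\nu)\leftrightarrow\Irr_{p'}(T\mid\nu)$. By a Frattini-type argument one may arrange $N_T(P\cap T)$ to play the role inside $T$ analogous to $N_G(P)$, so that the same reduction applies on the local side. Minimality of $|G:L|$ then forces $T=G$.

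Next, I would shrink $L$ via a chief factor of $G$ lying above it. If $L<L_1\unlhd G$ with $L_1/L$ an elementary abelian $p'$-group, characters over $\nu$ can be parametrised by characters of $L_1$ over $\nu$ together with characters of stabilisers, and a Glauberman-type correspondence passes the count into $L_1$; if $L_1/L$ is a $p$-group, the Dade--Glauberman--Nagao correspondence fulfils the same role. Iterating, I reduce to the situation where $L$ contains $C_G(L)$ up to a central $p'$-part and $L/Z(L)$ is a direct product of non-abelian simple groups $S_1\times\cdots\times S_k$, permuted transitively by $G$, with $\nu$ restricting non-trivially on each $S_i$ after central extension. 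In this layered situation the counting on both sides factorises as a product over an orbit of components, so the problem reduces to the action of $\Aut(S)_P$ on the characters of a single quasi-simple cover of $S=S_1$.

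It is precisely at this last reduction that the ``McKay good'' hypothesis on $S$ enters: conditions (1)--(4) provide a bijection $\Omega$ between $\Irr_{p'}$ of the quasi-simple cover and of the normaliser of a Sylow, which is equivariant under $\Aut(S)_P$, respects central characters, and matches the Clifford-theoretic obstructions. Feeding $\Omega$ into the central-product/induction setup of the previous paragraph yields the required bijection on $\Irr_{p'}(G\mid\nu)$ and $\Irr_{p'}(N_G(P)\mid\nu)$, contradicting the choice of counterexample. The main obstacle, both conceptually and in execution, is the fourth ``McKay good'' condition: matching the 2-cocycles that govern projective representations of $\Aut(G)_P/G$ above $\chi$ and $\Omega(\chi)$. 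This is what forces one to go well beyond merely counting characters of simple groups, and in practice it is by far the most delicate input --- especially for groups of Lie type, where the interaction between diagonal, field, and graph automorphisms above a fixed Sylow $p$-subgroup has to be controlled uniformly.
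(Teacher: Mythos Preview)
Note first that the statement is a \emph{conjecture}; the paper does not prove it, but immediately after stating it sketches (Steps~1--4) how a minimal-counterexample analysis reduces it to the ``McKay good'' condition on simple groups, which is the content of Theorem~\ref{thm:IMN}. So what is being compared are two outlines of that reduction.

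Your first step (Clifford reduction to $\nu$ being $G$-invariant via the inertia group $T=G_\nu$) matches the paper's Step~1 exactly. After that the two outlines diverge. The paper's Step~2 replaces the whole triple $(G,L,\nu)$ by an isomorphic character triple $(G^*,L^*,\nu^*)$ with $L^*\le Z(G^*)$ cyclic of $p'$-order and $\nu^*$ faithful; it does \emph{not} climb through chief factors. Its Step~3 then uses Wolf's theorem on $p$-solvable groups to force $G/L$ to have a \emph{unique} minimal normal subgroup $K/L$, necessarily of order divisible by~$p$, and Step~4 applies the McKay-good hypothesis to the simple composition factors of $K/L$. By contrast, you propose to enlarge $L$ (your word ``shrink'' is a slip) through chief factors $L<L_1\unlhd G$, invoking Glauberman and Dade--Glauberman--Nagao correspondences, and to arrive at a configuration where $L/Z(L)$ itself is a direct product of non-abelian simple groups.

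Both outlines are recognisable variants of arguments in the literature on these reductions, and both terminate in the same ``McKay good'' input. But your route omits two ingredients the paper singles out as key: the character-triple replacement (which is what lets one assume $L$ is central) and the appeal to Wolf's $p$-solvable result (which is what isolates a single non-abelian chief factor above the now-central $L$). Your Glauberman/DGN step is plausible in spirit but is sketched more vaguely than the paper at precisely the point where the paper is most explicit; in particular, the DGN correspondence does not obviously do what you claim for an arbitrary $p$-chief factor without further hypotheses. So: broadly the same strategy, a genuinely different middle section, and somewhat less detail at the crucial reduction step.
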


The original McKay conjecture is recovered from this as the special case when
$L=1$, $\nu=1$. But, despite of seeming to be stronger, the relative version is
much more accessible to an inductive approach. Indeed, Wolf \cite{Wo90} showed
that Conjecture~\ref{conj:relMcK} holds for $p$-solvable groups.
\par
To prove the relative conjecture, let $(G,L,\nu)$ be a minimal counterexample
with respect to $|G/L|$.
\begin{enumerate}
\item[Step 1:] We may assume that $\nu$ is $G$-invariant:\\
Let $L\le T:=G_\nu$ be the stabiliser of $\nu$ in $G$. By assumption $P\le T$,
so $|G:T|$ and $N:N\cap T|$ are prime to $p$. Clifford theory now yields
bijections $\Irr(T|\nu)\rightarrow\Irr(G|\nu)$ and
$\Irr(T\cap N|\nu)\rightarrow\Irr(N|\nu)$ preserving the sets of $p'$-degrees.
Thus, if $T<G$ then $G$ cannot be a minimal counterexample.
\item[Step 2:] We may assume $L\le Z(G)$ is a cyclic $p'$-group and $\nu$ is
faithful:\\
This uses the well-established theory of \emph{character triples}: there exists
a triple $(G^*,L^*,\nu^*)$ with $L^*\le Z(G^*)$ cyclic, $\nu^*\in\Irr(L^*)$
faithful and $G/L\cong G^*/L^*$ such that the Clifford theories in $G$ above
$\nu$ and in $G^*$ above $\nu^*$ agree (see e.g.~\cite[p.~186]{Is73}).
\item[Step 3:] We may assume that $G/L$ has a unique minimal normal subgroup
$K/L$, of order divisible by~$p$:\\
Let $K/L$ be a minimal normal subgroup of $G/L$. Then $|G/K|<|G/L|$. Set
$M:=N_G(KP)$.
If $M<G$ then we may conclude by induction. So we have $M=G$, and hence
$KP\unlhd G$. But then $G/K$ is $p$-solvable. If $G/L$ is $p$-solvable, then
so is $G$, and we may conclude by the theorem of Wolf. Hence, $G/L$ is not
$p$-solvable but $G/K$ is. From this it easily follows that $K/L$ is the unique
minimal normal subgroup.
\item[Step 4:] We are done if the simple composition factors of $K/L$ are McKay
good for $p$:\\
This is by far the most difficult part of the argument in \cite{IMN}, and we
will not go into it here.
\end{enumerate}
A streamlined version of the arguments for this and more general reductions
has been published by Sp\"ath \cite{Sp17b}.

Since the publication of Theorem~\ref{thm:IMN} all conjectures introduced above
have been shown to reduce to properties of simple groups:
\begin{enumerate}
\item the Alperin--McKay Conjecture~\ref{conj:AM} holds if all simple groups
 are \emph{AMcK good} (Sp\"ath \cite{Sp13a});
\item the Alperin Weight Conjecture~\ref{conj:AWC} holds if all simple groups
 are \emph{AWC good} (Na\-varro--Tiep \cite{NT11}, and Sp\"ath \cite{Sp13b} for
 the blockwise version);
\item the ``only if'' direction of Brauer's Height Zero
 Conjecture~\ref{conj:BHZ} holds if all simple groups
 are AMcK good and moreover it holds for all quasi-simple groups
 (Navarro--Sp\"ath \cite{NS14}).
\end{enumerate}
The assertion on quasi-simple groups necessary for BHZ was subsequently shown
by Kessar--Malle \cite{KM17}.

Moreover, for blocks $B$ with abelian defect groups, Koshitani and Sp\"ath
\cite{KS15a} show that being AWC good is implied by being AMcK good, if
moreover the $p$-modular decomposition matrix of $G$ is lower uni-triangular
with respect to an $\Aut(G)$-stable subset of characters.
Thus the reductions have also uncovered some remarkably strong connections
between the various conjectures.

We will not endeavour to spell out the somewhat technical conditions for being
good in the various cases, let us just say that they are similar to the one
of being McKay good explained above. See \cite{Sp17b}, for example.
Sp\"ath \cite{Sp17a} has also succeeded in reducing Dade's conjecture to a
property of simple groups.

So now all of the conjectures have been reduced to questions on finite simple
groups, can we solve them? Well, it turns out that our knowledge on the
representation theory of quasi-simple groups is not yet well-developed enough
to really answer these questions. Roughly speaking, the alternating groups
can be treated combinatorially (see also Example~\ref{exa:Sn-local} for an
illustration in symmetric groups), extending the aforementioned results of
Olsson and Alperin--Fong to accommodate the stronger inductive conditions 
(see Denoncin \cite{De14}), the sporadic simple groups can be treated by ad hoc
case-by-case methods
(and this has been completed by various authors, except for the Alperin weight
conjecture for the very largest sporadic groups, see e.g. An and Dietrich
\cite{AD12} and Breuer \cite{BrWeb}). Similarly, the case of the finitely
many exceptional covering groups of the simple groups of Lie type have been
settled. Thus we are left with the by far biggest class of examples: the
16 infinite families of simple groups of Lie type. Here, the case when $p$
is the defining prime has been shown to hold by Sp\"ath for all conjectures
\cite{Sp12,Sp13a,Sp13b} building on previous work of Maslowski \cite{Ms10}.

For the rest of these lectures we will concentrate on the McKay conjecture for
groups of Lie type. First we
need to understand the sets $\Irr(G)$ and $\Irr(N_G(P))$, or $\Irr(M)$ for
a suitable proper subgroup $N_G(P)\le M<G$.

\subsection{McKay's conjecture for $\GL_n(q)$}
Let's take a look at the case of $G=\GL_n(q)$, $q=p^f$ a prime power. Here,
the ordinary character table was determined by Green \cite{Gr55} in 1955. We
need two ingredients. First let
$$B=\Big\{\begin{pmatrix} *& \ldots& *\\ & \ddots\\ 0& & *\end{pmatrix}\Big\}
  \le G$$
be the Borel subgroup of $G$ consisting of upper triangular invertible
matrices, and consider the induced character $1_B^G$ (the permutation character
of $G$ on the cosets of $B$). 

\begin{thm}[Green (1955)]   \label{thm:Borel}
 The constituents of $1_B^G$ are in bijection with partitions $\la\vdash n$
 such that
 $$1_B^G=\sum_{\lambda\vdash n}\chi^\la(1)\,\rho_q^\la,$$
 where $\rho_q^\la\in\Irr(\GL_n(q))$ is the character labelled by $\la$,
 and $\chi^\la\in\Irr(\fS_n)$ is as in Example~\ref{exa:Sn}.
\end{thm}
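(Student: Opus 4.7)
The plan is to identify $\End_{\CC G}(\CC[G/B])$ with an Iwahori--Hecke algebra and then exploit semisimple deformation theory to read off the decomposition. First, I would compute the dimension: by Frobenius reciprocity and the Mackey formula, $\dim_\CC\End_{\CC G}(\CC[G/B]) = |B\backslash G/B|$, and the Bruhat decomposition $G = \bigsqcup_{w\in W} BwB$ with Weyl group $W\cong\fS_n$ yields that this equals $|W| = n!$.

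Next, I would appeal to Iwahori's theorem: the endomorphism algebra $\cH := \End_{\CC G}(1_B^G)$ admits a $\CC$-basis $\{T_w\}_{w\in W}$ indexed by the Bruhat double cosets, satisfying the braid relations of $\fS_n$ together with the quadratic relation $(T_s - q)(T_s + 1) = 0$ for each simple reflection $s\in W$. Thus $\cH$ is the Iwahori--Hecke algebra $\cH_n(q)$ of type $A_{n-1}$ at parameter $q$.

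Since $q$ is a prime power, the Poincar\'e polynomial of $\fS_n$ does not vanish at $q$, so $\cH_n(q)$ is semisimple, and Tits' deformation theorem gives $\cH_n(q) \cong \CC\fS_n$ as $\CC$-algebras; its simple modules $W^\la$ are consequently labelled by partitions $\la\vdash n$ with $\dim W^\la = \chi^\la(1)$. Now the double-centralizer principle applied to the semisimple $\CC G$-module $\CC[G/B]$ with endomorphism algebra $\cH$ furnishes a decomposition
$$\CC[G/B] \;\cong\; \bigoplus_{\la\vdash n} V^\la \otimes_\CC W^\la,$$
where the $V^\la$ are pairwise non-isomorphic simple $\CC G$-modules; defining $\rho_q^\la$ to be the character of $V^\la$ and taking characters yields $1_B^G = \sum_{\la\vdash n}\chi^\la(1)\,\rho_q^\la$, as claimed.

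The main technical obstacle I expect is Iwahori's presentation of $\cH$ by generators $T_w$ satisfying the braid and quadratic relations: it requires a concrete analysis of $B$-bi-invariant functions supported on a single Bruhat cell $BwB$, together with the computation of products $T_s T_w$, and the quadratic relation $(T_s - q)(T_s + 1) = 0$ depends crucially on the fact that $|BsB/B| = q$ for each simple reflection $s$. Once this presentation is in hand, the deformation identification $\cH_n(q) \cong \CC\fS_n$ and the Wedderburn step are essentially routine.
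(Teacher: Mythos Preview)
Your proposal is correct and follows exactly the route the paper indicates: the paper does not give a detailed proof but merely remarks that the result rests on the identification of $\End_{\CC G}(1_B^G)$ with the Iwahori--Hecke algebra $\cH(\fS_n,q)$, together with Tits' deformation theorem giving $\cH(\fS_n,q)\cong\CC\fS_n$. You have simply fleshed out these two steps (Bruhat decomposition and Iwahori's presentation for the first, semisimplicity and the double-centraliser decomposition for the second), so there is nothing to add.
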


Thus, the permutation character of $G$ on $B$ decomposes similarly to the
regular character of $\fS_n$. This result is one reason why $\GL_n(q)$ is
sometimes called a ``quantisation of $\fS_n$'', or ``$\fS_n=\GL_n(1)$''. The
proof of Theorem~\ref{thm:Borel} rests on the fact that the endomorphism
algebra $\End_{\CC G}(1_B^G)$ is an Iwahori--Hecke algebra $\cH(\fS_n,q)$ at
the parameter $q$, which by Tits' deformation theorem is isomorphic to the
complex group algebra $\CC\fS_n$ of $\fS_n$. The constituents $\rho_q^\la$,
$\la\vdash n$, of $1_B^G$ occurring in Theorem~\ref{thm:Borel} were later
called the \emph{unipotent characters} of $G$.

Now let $s\in\GL_n(q)$ be a $p'$-element; then $s$ is diagonalisable over
a finite extension of $\FF_q$ (it is a \emph{semisimple element} of $G$). Its
characteristic polynomial has the form $\prod_{i=1}^r f_i^{n_i}$ with
suitable irreducible polynomials $f_i\in\FF_q[X]$ of degrees $d_i=\deg(f_i)$
such that $\sum_i n_id_i=n$. Then
$$C_{\GL_n(q)}(s)\cong\GL_{n_1}(q^{d_1})\times\cdots\times\GL_{n_r}(q^{d_r}).$$
Now for partitions $\la_i\vdash n_i$, $1\le i\le r$, and
$\rho_{q^{d_i}}^{\la^i}$ the corresponding unipotent characters of the factors
$\GL_{n_i}(q^{d_i})$, we have an irreducible character
$\rho_{q^{d_1}}^{\la^1}\otimes\cdots\otimes\rho_{q^{d_r}}^{\la^r}$ of $C_G(s)$.
Let us write $\cS$ for the set of all pairs $(s,\ula)$, where $s\in \GL_n(q)$
is a semisimple element up to conjugation with characteristic polynomial
$\prod f_i^{n_i}$, and $\ula=(\la^1,\ldots,\la^r)\vdash(n_1,\ldots,n_r)$ is
an $r$-tuple of partitions.

\begin{thm}[Green (1955)]   \label{thm:Green}
 There is a natural bijection
 $$\cS\longrightarrow\Irr(\GL_n(q)),\qquad (s,\ula)\mapsto\rho^{s,\ula},$$
 such that
 $$\rho^{s,\ula}(1)
   =|\GL_n(q):C_{\GL_n(q)}(s)|_{p'}\cdot\prod_{i=1}^r\rho_{q^{d_i}}^{\la^i}(1).
 $$
\end{thm}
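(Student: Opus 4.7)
The plan is to combine Theorem~\ref{thm:Borel} with a Jordan-style decomposition of characters. A special feature of $\GL_n$, not shared by a general reductive group, is that the centraliser $\bL := C_\bG(s)$ of any semisimple element $s \in \bG := \GL_n$ is itself a Levi subgroup of some $F$-stable parabolic subgroup of $\bG$; hence Harish-Chandra induction $\RLG$ is defined, and $L := \bL^F \cong \prod_{i=1}^r \GL_{n_i}(q^{d_i})$. Given $(s,\ula) \in \cS$, I would first produce a unipotent character $\rho_L^\ula \in \Irr(L)$ by forming the outer tensor product of the characters $\rho_{q^{d_i}}^{\la^i}$ of the factors, furnished by Theorem~\ref{thm:Borel} applied with $q$ replaced by $q^{d_i}$. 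Because $s \in Z(\bL)$ and $Z(\bL)$ is connected in $\GL_n$, the duality between $\bL$ and its dual attaches to $s$ a linear character $\hat s \colon L \to \CC^\times$, and I set
$$\rho^{s,\ula} := \pm\,\RLG(\hat s \cdot \rho_L^\ula),$$
with the sign chosen so that the degree is positive.

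The degree formula would then follow directly from the construction. Parabolic induction satisfies $\RLG(\chi)(1) = |G:P|\,\chi(1)$ for any $\chi \in \Irr(L)$, where $\bP$ denotes the chosen parabolic with Levi $\bL$; since $|G:P|_p = 1$ (the unipotent radical of a Borel is already a Sylow $p$-subgroup of $G$), one has $|G:P| = |G:L|_{p'}$. Linearity of $\hat s$ gives $(\hat s \cdot \rho_L^\ula)(1) = \rho_L^\ula(1) = \prod_i \rho_{q^{d_i}}^{\la^i}(1)$, yielding the claimed formula. For surjectivity onto $\Irr(\GL_n(q))$, a standard generating-function count shows that the number of conjugacy classes of $\GL_n(q)$ equals $|\cS|$ (each semisimple class $[s]$ contributing $\prod_i p(n_i)$ unipotent classes in $C_G(s)$); so once irreducibility and pairwise distinctness of the $\rho^{s,\ula}$ are proved, exhaustion is automatic.

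The hard part, as I expect it, is precisely this last step. My plan is to analyse the endomorphism algebra $\End_{\CC G}(\RLG(\hat s))$: by the Howlett--Lehrer comparison theorem (or Green's original intertwiner calculation) it is an Iwahori--Hecke algebra of the relative Weyl group $W_G(\bL, \hat s) \cong \prod_i \fS_{n_i}$, and Tits' deformation theorem identifies it with the group algebra $\CC[\prod_i \fS_{n_i}]$. Combining this with Theorem~\ref{thm:Borel} applied to each factor $\fS_{n_i}$ labels the irreducible constituents of $\RLG(\hat s \cdot \rho_L^\ula)$ by the tuples $\ula$, producing a well-defined irreducible for each $(s,\ula)$. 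Disjointness across distinct semisimple classes then reduces to the classical statement that Harish-Chandra series attached to non-conjugate cuspidal pairs are disjoint; for $\GL_n$ this boils down to the observation that $\hat s$ and $\widehat{s'}$ give rise to the same series only when $s$ and $s'$ are $G$-conjugate.
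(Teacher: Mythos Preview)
The paper does not actually prove this theorem; it is quoted as Green's classical 1955 result and used as input for what follows. That said, your argument has a genuine gap that would make it fail as written.

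Your key structural claim --- that $\bL=C_\bG(s)$ is a Levi complement of some \emph{$F$-stable} parabolic subgroup of $\bG$ --- is false. It is true that in $\GL_n$ the centraliser of any semisimple element is a Levi subgroup, but there is no reason the ambient parabolic can be chosen $F$-stable. Already in $\GL_2$, if $s$ has irreducible characteristic polynomial over $\FF_q$ (so $d_1=2$, $n_1=1$), then $C_G(s)\cong\GL_1(q^2)$ is a non-split maximal torus; the two Borels containing it are interchanged by $F$, so neither is $F$-stable. In that situation Harish-Chandra induction $\RLG$ is simply undefined, and with it your construction of $\rho^{s,\ula}$, your degree computation via $|G:P|$, and your Howlett--Lehrer endomorphism-algebra analysis all collapse. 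More generally this happens whenever some $d_i>1$, i.e.\ whenever $s$ has an eigenvalue outside $\FF_q$.

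The repair is to replace Harish-Chandra induction by Deligne--Lusztig (Lusztig) induction from an $F$-stable Levi of a not-necessarily-$F$-stable parabolic, which the paper only introduces later; the degree formula then comes from the character formula for $\RLG$ and irreducibility from the Mackey formula rather than Howlett--Lehrer. But this is anachronistic for 1955: Green's own argument builds the characters directly as explicit class functions and verifies orthogonality by hand, with no induction functor in sight.
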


The sets $\cE(G,s):=\{\rho^{s,\ula}\}\subseteq\Irr(G)$ are  called
\emph{Lusztig series}.
Observe that by its definition $\cE(G,s)$ is in bijection with the set of
$r$-tuples $\{(\la^1,\ldots,\la^r)\vdash(n_1,\ldots,n_r)\}$ of
partitions, which in turn parametrise the unipotent characters of
$C_G(s)=\GL_{n_1}(q^{d_1})\times\cdots\times\GL_{n_r}(q^{d_r})$. This is 
called the \emph{Jordan decomposition} of the characters in $\Irr(\GL_n(q))$.

Thus, in order to verify for example the McKay conjecture we need to know the
unipotent character degrees. These turn out to be given by a quantisation
of the hook formula that we already saw for the character degrees of $\fS_n$
in Example~\ref{exa:Sn}
$$\rho_q^\la(1)=q^{a(\la)}\,\frac{[n]_q!}{\prod_h[\ell(h)]_q}\,,$$
where the product runs again over all hooks $h$ of $\la$. Here,
$[m]_q:=(q^m-1)/(q-1)$ for $m\ge1$, $[n]_q!:=[1]_q\cdots[n]_q$, and
$a(\la):=\sum_i(i-1)\la_i$ when $\la=(\la_1\ge\la_2\ge\ldots)$.

There are two cases to discuss for the McKay conjecture: either the relevant
prime equals $p$, or it is different from $p$, in which case we will call
it $\ell$. Let us first prove the following:

\begin{prop}   \label{prop:p'}
 We have
 $$\Irr_{p'}(\GL_n(q))=\{\rho^{s,\ula}\mid \ula=((n_1),\ldots,(n_r)),\
   s\in\GL_n(q)\text{ semisimple}\},$$
 so $\Irr_{p'}(\GL_n(q))$ is in bijection with the semisimple conjugacy classes
 of $\GL_n(q)$.
\end{prop}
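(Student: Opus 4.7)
The plan is to read off the $p$-part of $\rho^{s,\ula}(1)$ directly from Green's degree formula in Theorem~\ref{thm:Green} combined with the $q$-hook formula for unipotent degrees quoted just above the proposition, and then simply identify which tuples $\ula$ can produce a $p'$-degree.

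First I would observe that in the formula
$$\rho^{s,\ula}(1)=|\GL_n(q):C_{\GL_n(q)}(s)|_{p'}\cdot\prod_{i=1}^r\rho_{q^{d_i}}^{\la^i}(1)$$
the first factor is by construction coprime to $p$, so the whole $p$-part of $\rho^{s,\ula}(1)$ is concentrated in the product of unipotent degrees. For each factor I would apply
$$\rho_{q^{d_i}}^{\la^i}(1)=q^{d_i\,a(\la^i)}\,\frac{[n_i]_{q^{d_i}}!}{\prod_h[\ell(h)]_{q^{d_i}}}\,.$$
Since $q\equiv0\pmod p$, every $q$-integer $[m]_q=1+q+\cdots+q^{m-1}$ satisfies $[m]_q\equiv1\pmod p$, so the rational factor $[n_i]_{q^{d_i}}!/\prod_h[\ell(h)]_{q^{d_i}}$ (which is an integer, being a specific dimension) is prime to $p$. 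Hence
$$\rho_{q^{d_i}}^{\la^i}(1)_p=q^{d_i\,a(\la^i)},\qquad\text{so}\qquad\rho^{s,\ula}(1)_p=\prod_{i=1}^r q^{d_i\,a(\la^i)}.$$

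The next step is a trivial observation about $a(\la)=\sum_{i\ge1}(i-1)\la_i$: this sum vanishes if and only if $\la_i=0$ for all $i\ge2$, i.e.\ if and only if $\la$ is the one-row partition $(|\la|)$. Applying this to each $\la^i$, I conclude that $\rho^{s,\ula}\in\Irr_{p'}(\GL_n(q))$ if and only if $\la^i=(n_i)$ for every $i$, which is exactly the description asserted.

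Finally, for the second assertion, the set $\cS$ is defined as pairs $(s,\ula)$ with $s$ a semisimple class and $\ula$ a compatible multi-partition; restricting to $\ula=((n_1),\ldots,(n_r))$ leaves exactly one choice of $\ula$ for each semisimple class (since the shape of $\ula$ is forced by the characteristic polynomial of $s$, which in turn determines the class). So the map $(s,\ula)\mapsto s$ restricts to a bijection from the subset of $\cS$ just identified onto the set of semisimple conjugacy classes of $\GL_n(q)$, and composing with Green's bijection yields the claim. I do not expect any real obstacle: the only mildly non-trivial input is the congruence $[m]_q\equiv 1\pmod p$, and everything else is bookkeeping with the hook formula and the definition of $\cS$.
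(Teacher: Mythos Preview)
Your proof is correct and follows exactly the same approach as the paper's: both use the degree formula together with the $q$-hook formula to see that the $p$-part of $\rho^{s,\ula}(1)$ is $q^{\sum_i d_i a(\la^i)}$, which equals~$1$ precisely when every $\la^i$ is a one-row partition. The paper compresses this into a single sentence, whereas you have spelled out the details (including the observation $[m]_q\equiv1\pmod p$ and the bijection with semisimple classes) that the paper leaves implicit.
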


\begin{proof}
By the degree formula given above, $p$ does not divide $\rho^{s,\ula}(1)$
if and only if $\sum a(\la^i)=0$, that is, if and only if all partitions
$\la^i$ are of the form $\la^i=(n_i)$.
\end{proof}

Let us now consider the local side. Here
$$P=\Big\{\begin{pmatrix} 1& & *\\ & \ddots\\ 0& & 1\end{pmatrix}\Big\}
  \le G$$
is a Sylow $p$-subgroup of $G$, and $N_G(P)=B=P.T$ with
$$T=\Big\{\begin{pmatrix} *& & 0\\ & \ddots\\ 0& & *\end{pmatrix}\Big\}
  \le G$$
an abelian subgroup (a so-called \emph{maximally split maximal torus of $G$}).
Thus
$$\Irr_{p'}(N_G(P))=\Irr_{p'}(B)=\Irr_{p'}(P.T)=\Irr_{p'}(P/P'.T).$$
Maslowski \cite{Ms10} showed that $\Irr_{p'}(P/P'.T)$ can be parametrised by
$\FF_q^\times\times(\FF_q)^{n-1}$, and thus by the set of monic polynomials of
degree~$n$ over $\FF_q$ with non-vanishing constant coefficient, which are
exactly the possible characteristic polynomials of semisimple elements in
$\GL_n(q)$. By Proposition~\ref{prop:p'} this establishes McKay's conjecture
for $\GL_n(q)$ and the prime $p$. He also showed that the constructed bijection
is equivariant with respect to $\Aut_P(G)$ (the relevant outer automorphisms
are field automorphisms and the transpose-inverse automorphism when $n>2$),
and also compatible with respect to central characters.

Now what about primes $\ell\ne p$?
Here we have the following observation, which is again immediate from the
degree formula:

\begin{prop}   \label{prop:ell'GL}
 Let $\ell\ne p$. Then $\rho^{s,\ula}\in\Irr_{\ell'}(\GL_n(q))$ if and only if
 $s$ centralises a Sylow $\ell$-subgroup of $\GL_n(q)$ and
 $\rho_{q^{d_i}}^{\la^i}\in\Irr_{\ell'}(\GL_{n_i}(q^{d_i}))$ for all
 $i=1,\ldots,r$.
\end{prop}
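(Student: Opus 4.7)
My plan is to read off the claim directly from the degree formula in Theorem~\ref{thm:Green}. Recall that
$$\rho^{s,\ula}(1)=|\GL_n(q):C_{\GL_n(q)}(s)|_{p'}\cdot\prod_{i=1}^r\rho_{q^{d_i}}^{\la^i}(1),$$
and $\rho^{s,\ula}\in\Irr_{\ell'}(\GL_n(q))$ means exactly that the $\ell$-part of this product is~$1$.

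First I would observe that since $\ell\ne p$, passing from the full index $|\GL_n(q):C_{\GL_n(q)}(s)|$ to its $p'$-part does not alter the $\ell$-part. Hence the $\ell$-part of $\rho^{s,\ula}(1)$ factors as
$$\rho^{s,\ula}(1)_\ell=|\GL_n(q):C_{\GL_n(q)}(s)|_\ell\cdot\prod_{i=1}^r\rho_{q^{d_i}}^{\la^i}(1)_\ell,$$
and so $\rho^{s,\ula}$ has $\ell'$-degree if and only if both factors on the right equal~$1$.

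Next I would handle each factor separately. The condition $|\GL_n(q):C_{\GL_n(q)}(s)|_\ell=1$ is equivalent, by Sylow's theorem applied to the $\ell$-subgroups of $\GL_n(q)$, to $C_{\GL_n(q)}(s)$ containing a full Sylow $\ell$-subgroup of $\GL_n(q)$, i.e., to $s$ centralising some Sylow $\ell$-subgroup. The condition $\prod_i\rho_{q^{d_i}}^{\la^i}(1)_\ell=1$ is visibly equivalent to $\rho_{q^{d_i}}^{\la^i}(1)_\ell=1$ for each $i$, i.e., $\rho_{q^{d_i}}^{\la^i}\in\Irr_{\ell'}(\GL_{n_i}(q^{d_i}))$ for all $i$.

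There is no real obstacle here: the statement is essentially a bookkeeping observation on the degree formula, using only that $\ell$ and $p$ are distinct primes and Sylow's theorem inside the centraliser. The one small point worth flagging explicitly is the substitution of $|\GL_n(q):C(s)|$ for $|\GL_n(q):C(s)|_{p'}$ when extracting $\ell$-parts, which is what makes the centraliser-based criterion come out cleanly.
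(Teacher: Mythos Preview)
Your argument is correct and is exactly what the paper intends: it states the proposition as ``immediate from the degree formula'' without giving a written proof, and your unpacking of the $\ell$-part of $\rho^{s,\ula}(1)=|\GL_n(q):C_{\GL_n(q)}(s)|_{p'}\cdot\prod_i\rho_{q^{d_i}}^{\la^i}(1)$ is precisely that immediate deduction. There is nothing to add.
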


That is, in order to prove the McKay conjecture in this case we are reduced
to understanding the unipotent characters $\rho^{\la^i}$, and for this, to
determine when $\ell$ does divide a factor $q^m-1$. Let $\Phi_d$ denote the
$d$th cyclotomic polynomial over $\QQ$, so that
$q^m-1=\prod_{d|m}\Phi_d(q)$. We also write $d_\ell(q)$ for the order of $q$
in $\FF_\ell^\times$, that is, its order modulo~$\ell$. We have the following
elementary criterion:

\begin{lem}   \label{lem:div}
 Let $d=d_\ell(q)$. Then $\ell$ divides $\Phi_d(q)$ if and only if
 $m\in\{d,d\ell,d\ell^2,\ldots\}$.
\end{lem}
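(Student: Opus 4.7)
The plan is to work modulo $\ell$ and determine when $\bar q\in\FF_\ell^\times$ is a root of $\Phi_m(X)$ reduced mod $\ell$; this has the advantage of treating $\ell=2$ uniformly with odd $\ell$, avoiding the case distinctions that arise from a direct Lifting-the-Exponent approach. (Here I read the statement as $\ell\mid\Phi_m(q)$, since with $d=d_\ell(q)$ fixed the displayed equivalence can only concern the running index $m$.)

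First I would strip off the $\ell$-part of $m$: writing $m=\ell^a m'$ with $\gcd(m',\ell)=1$, I would establish the congruence
$$\Phi_m(X)\equiv\Phi_{m'}(X)^{\phi(\ell^a)}\pmod{\ell}\qquad(a\ge1)$$
by induction on $a$, starting from the classical cyclotomic identities $\Phi_{\ell k}(X)=\Phi_k(X^\ell)/\Phi_k(X)$ when $\gcd(k,\ell)=1$ and $\Phi_{\ell k}(X)=\Phi_k(X^\ell)$ when $\ell\mid k$, and invoking the Frobenius congruence $f(X^\ell)\equiv f(X)^\ell\pmod\ell$ for any $f\in\ZZ[X]$. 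This reduces the question to whether $\ell\mid\Phi_{m'}(q)$.

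Next I would handle the case $\gcd(m',\ell)=1$: here $X^{m'}-1$ is separable modulo $\ell$, so the factorisation $X^{m'}-1=\prod_{k\mid m'}\Phi_k(X)$ remains a factorisation into pairwise coprime pieces mod $\ell$, and the roots of $\Phi_{m'}(X)$ in $\overline{\FF}_\ell$ are therefore precisely the primitive $m'$-th roots of unity. Hence $\ell\mid\Phi_{m'}(q)$ if and only if $\bar q\in\FF_\ell^\times$ has exact order $m'$, i.e.\ $m'=d$. Combining the two steps, $\ell\mid\Phi_m(q)$ iff $m'=d$, iff $m=d\ell^a$ for some $a\ge0$, which is the stated equivalence.

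The main (and essentially only) obstacle is the congruence in the first step; the two cyclotomic identities and the ensuing induction on $a$ are standard but need to be carried out carefully. The payoff compared with the alternative Lifting-the-Exponent route is that no separate analysis is needed at $\ell=2$, where the naive formula $v_2(q^n-1)=v_2(q-1)+v_2(n)$ fails for $q\equiv 3\pmod 4$ and would otherwise force a bifurcation of the argument.
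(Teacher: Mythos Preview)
The paper does not supply a proof of this lemma; it is introduced as an ``elementary criterion'' and then immediately used. Your argument is correct and self-contained: the congruence $\Phi_{\ell^a m'}(X)\equiv\Phi_{m'}(X)^{\phi(\ell^a)}\pmod\ell$ (derived from the two cyclotomic recursions together with Frobenius) reduces the question to the $\ell'$-part $m'$, and for $\gcd(m',\ell)=1$ the separability of $X^{m'}-1$ modulo $\ell$ pins down the roots of $\Phi_{m'}$ in $\overline{\FF}_\ell$ as precisely the elements of exact order $m'$. Your reading of the statement as $\ell\mid\Phi_m(q)$ rather than $\Phi_d(q)$ is also the only sensible one given the surrounding context. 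One tiny point you use implicitly and might make explicit: $d=d_\ell(q)$ divides $|\FF_\ell^\times|=\ell-1$, so $d$ is automatically prime to $\ell$, which is why the conclusion $m'=d$ genuinely translates into $m\in\{d,d\ell,d\ell^2,\ldots\}$.
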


Then the degree formula implies:

\begin{cor}   \label{cor:hooks}
 Let $\ell>2$. For $\la\vdash n$ we have $\rho^\la\in\Irr_{\ell'}(\GL_n(q))$ if
 and only if $\la$ has exactly $w$ hooks of length~$d$, where $d=d_\ell(q)$ and
 $n=wd+r$ with $0\le r<d$.
\end{cor}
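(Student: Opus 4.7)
The plan is to compute $v_\ell(\rho_q^\la(1))$ directly from the hook formula and translate the $\ell'$-condition into a condition on hook lengths. Since $\ell\ne p$, the factor $q^{a(\la)}$ is an $\ell$-unit, so
\[
v_\ell(\rho_q^\la(1))=v_\ell\Big(\frac{[n]_q!}{\prod_{h}[\ell(h)]_q}\Big).
\]
Expanding $[m]_q=\prod_{e\mid m,\,e>1}\Phi_e(q)$ rewrites this ratio as $\prod_{e\ge 2}\Phi_e(q)^{\lfloor n/e\rfloor-N_e(\la)}$, where $N_e(\la):=|\{h:e\mid\ell(h)\}|$ is the number of hooks of $\la$ of length divisible by $e$. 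Each exponent is non-negative: via the standard $e$-core/$e$-quotient bijection the hooks of $\la$ of length divisible by $e$ correspond to the hooks of the $e$-quotient $(\la^{(0)},\ldots,\la^{(e-1)})$, so $N_e(\la)=\sum_i|\la^{(i)}|\le\lfloor n/e\rfloor$, with equality precisely when the $e$-core of $\la$ has minimal size $n\bmod e$.

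Next I would invoke Lemma~\ref{lem:div} to conclude that among the $\Phi_e(q)$ only those with $e\in\{d,d\ell,d\ell^2,\ldots\}$ carry a factor of $\ell$. For $\ell>2$ the lifting-the-exponent lemma pins down $v_\ell(\Phi_{d\ell^k}(q))$ cleanly (namely $=1$ for $k\ge 1$ and $=v_\ell(q^d-1)\ge 1$ for $k=0$). Summing contributions gives
\[
v_\ell(\rho_q^\la(1))=v_\ell(q^d-1)\,\bigl(w-N_d(\la)\bigr)+\sum_{k\ge 1,\ d\ell^k\le n}\bigl(\lfloor n/(d\ell^k)\rfloor-N_{d\ell^k}(\la)\bigr),
\]
a sum of non-negative terms. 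Hence $\rho_q^\la\in\Irr_{\ell'}$ is equivalent to the simultaneous vanishing of every summand. The key $k=0$ term vanishes exactly when $N_d(\la)=w$, i.e.\ when the $d$-core of $\la$ has the minimal possible size $r$ and $\la$ is obtained from this core by successively attaching $w$ rim $d$-hooks---this is the intended combinatorial meaning of ``$\la$ has exactly $w$ hooks of length~$d$''.

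I expect the main obstacle to be combinatorial rather than arithmetic: establishing the non-negativity $\lfloor n/e\rfloor\ge N_e(\la)$ and its equality case via abacus/core--quotient theory, and then verifying that the higher-$k$ vanishing conditions are compatible with (and in the relevant range subsumed by) the $k=0$ condition, using that hooks of $\la$ of length $d\ell^k$ correspond to hooks of the $d$-quotient of length $\ell^k$. The hypothesis $\ell>2$ enters precisely through LTE, which fails for $\ell=2$ where the factors $\Phi_1,\Phi_2$ interact and a separate analysis is needed; this is why the statement is restricted to odd primes.
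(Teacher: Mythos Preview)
Your approach via the hook formula and cyclotomic factorisation is exactly what the paper has in mind: it offers no proof beyond the line ``Then the degree formula implies'', and your expansion $[m]_q=\prod_{e\mid m,\,e>1}\Phi_e(q)$, the resulting expression $\prod_{e\ge2}\Phi_e(q)^{\lfloor n/e\rfloor-N_e(\la)}$, and the appeal to Lemma~\ref{lem:div} together with lifting-the-exponent for $\ell>2$ are precisely the intended unpacking. Your displayed formula for $v_\ell(\rho_q^\la(1))$ as a sum of non-negative terms is correct, as is the non-negativity argument via the $e$-core/$e$-quotient bijection.

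The gap is in your last paragraph. The higher-$k$ vanishing conditions are \emph{not} subsumed by the $k=0$ condition; this is not a technicality awaiting verification but is actually false. Take $\ell=3$, $d=2$, $n=12$ (so $w=6$, $r=0$) and $\la=(8,4)$: its hook lengths are $9,8,7,6,4,3,2,1,4,3,2,1$, the $2$-core is empty so $N_2(\la)=6=w$ and your $k=0$ term vanishes, yet $N_6(\la)=1\ne 2=\lfloor 12/6\rfloor$, so the $k=1$ term contributes $1$ and $\rho_q^{(8,4)}\notin\Irr_{\ell'}(\GL_{12}(q))$. Conversely $\la=(6,6)$ has $N_2=6$ and $N_6=2$, hence \emph{is} of $\ell'$-degree, but has only two hooks of length exactly~$2$. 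Thus neither reading of ``$w$ hooks of length $d$'' --- length exactly $d$, or length divisible by $d$ --- matches the true criterion $N_{d\ell^k}(\la)=\lfloor n/(d\ell^k)\rfloor$ for all $k\ge0$ that your own computation produces. The printed corollary is an informal survey statement; a correct write-up should state and prove this full condition (equivalently, via the $d$-quotient: the $d$-core of $\la$ has size $r$ \emph{and} the tuple $(\la^{(0)},\ldots,\la^{(d-1)})$ satisfies the analogous iterated $\ell$-condition on $w$), rather than attempting to collapse everything to the single $k=0$ equality.
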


Let's now turn to the local situation.

\begin{prop}   \label{prop:ell-local}
 Let $d=d_\ell(q)$ and write $n=wd+r$ with $0\le r<d$. The normaliser $N_G(P)$
 of a Sylow $\ell$-subgroup $P$ of $G:=\GL_n(q)$ is contained in
 $$N_G(\GL_1(q^d)^w)=\big((\GL_1(q^d).C_d)\wr\fS_w\big)\times\GL_r(q)
   \cong\big(\GL_1(q^d)^w.G(d,1,w)\big)\times\GL_r(q),$$
 where $G(d,1,n)= C_d\wr\fS_w$ is an imprimitive complex reflection group.
\end{prop}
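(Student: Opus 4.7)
The plan is to realise $H := \GL_1(q^d)^w$ explicitly inside $G$, compute $N_G(H)$ directly, verify by an order calculation that $N_G(H)$ contains a Sylow $\ell$-subgroup, and then show that the abelian intersection $P \cap H$ determines $H$, forcing $N_G(P) \le N_G(H)$. Throughout I take $\ell$ odd, consistent with Corollary~\ref{cor:hooks}. Fix an $\FF_q$-decomposition $\FF_q^n = V_1 \oplus \cdots \oplus V_w \oplus V_0$ with $\dim V_i = d$ for $1 \le i \le w$ and $\dim V_0 = r$, equip each $V_i$ (for $i\ge 1$) with an $\FF_{q^d}$-vector space structure, and let $H \le G$ act by $\FF_{q^d}^\times$-scalars on each $V_i$ and trivially on $V_0$. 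Any element commuting with $H$ must preserve the $H$-isotypic decomposition and commute with each $\FF_{q^d}$-action, giving $C_G(H) = H \times \GL_r(q)$; by exhibiting the Galois group $\Gal(\FF_{q^d}/\FF_q) = C_d$ on each $V_i$ and the symmetric group $\fS_w$ permuting the $V_i$'s as elements of $G$, the quotient $N_G(H)/C_G(H)$ is seen to be $C_d \wr \fS_w$, and the displayed structure of $N_G(H)$ follows.

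Next I compare $\ell$-parts using $|G| = q^{n(n-1)/2} \prod_{i=1}^n (q^i-1)$. By Lemma~\ref{lem:div}, the cyclotomic factors $\Phi_m(q)$ contributing to $|G|_\ell$ are exactly those with $m \in \{d,d\ell,d\ell^2,\ldots\}$, and a lifting-the-exponent computation (valid for $\ell$ odd) gives $|G|_\ell = (q^d-1)_\ell^w \cdot (w!)_\ell$. On the other hand $d$ is the order of $q$ modulo $\ell$ and so divides $\ell-1$, whence $\gcd(d,\ell) = 1$; while $r<d$ together with Lemma~\ref{lem:div} forces $\gcd(|\GL_r(q)|,\ell) = 1$. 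Hence $|N_G(H)|_\ell = (q^d-1)_\ell^w \cdot (w!)_\ell = |G|_\ell$, and I may choose a Sylow $\ell$-subgroup $P$ of $G$ lying inside $N_G(H)$.

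Set $P_0 := P \cap H \cong C^w$ with $C$ cyclic of order $(q^d-1)_\ell$, so that $P = P_0.W_\ell$ with $W_\ell \le \fS_w$ a Sylow $\ell$-subgroup permuting the $w$ factors of $P_0$. A generator of any single $C$-factor acts on its $V_i$ with $\FF_q$-irreducible minimal polynomial of degree precisely $d$ (its order divides $q^d-1$ but not $q^{d'}-1$ for $d'<d$ by Lemma~\ref{lem:div}), so its centraliser in that $V_i$-block is $\GL_1(q^d)$; intersecting over all $w$ generators yields $C_G(P_0) = H \times \GL_r(q)$. Since $r<d$, no subgroup of $\GL_r(q)$ can be isomorphic to $\GL_1(q^d)$, so $H$ is the unique direct factor of $C_G(P_0)$ of type $\GL_1(q^d)^w$. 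Once I know $P_0$ is characteristic in $P$, any $g \in N_G(P)$ normalises $P_0$, hence also $C_G(P_0) = H \times \GL_r(q)$, and by the uniqueness just noted also $H$, as required.

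The main obstacle is the characteristic-ness of $P_0$ in $P$. For odd $\ell$ this follows from the iterated wreath-product description of a Sylow $\ell$-subgroup of $\fS_w$: at each level of the iteration the base subgroup is characteristic, and $P_0$ can typically be singled out as the unique abelian normal subgroup of $P$ of order $|C|^w$ and exponent $|C|$ (with small-parameter edge cases handled by ad hoc arguments using the faithful action of $W_\ell$ on $P_0$). For $\ell = 2$ the claim genuinely fails in small examples --- already in $\GL_2(3)$ one finds $P \cong D_8$ with the Klein-four base not characteristic --- which illuminates why the hypothesis $\ell > 2$ of Corollary~\ref{cor:hooks} is natural in this circle of ideas.
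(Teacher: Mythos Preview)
The paper states this proposition without proof, treating it as a known structural fact (the analogue for general classical groups is later referred to \cite[\S3.2]{BMM}). Your argument is correct in outline and supplies what the paper omits, but two steps deserve sharpening.

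First, the ``unique direct factor of type $\GL_1(q^d)^w$'' claim is not the right invariant: direct factors of $H\times\GL_r(q)$ need not respect the given decomposition, so this does not by itself pin down $H$. A cleaner route is to observe that $P_0$ acts fixed-point-freely on each $V_i$ ($i\ge1$) and trivially on $V_0$, so $V_0=(\FF_q^n)^{P_0}$ and $V_1\oplus\cdots\oplus V_w=[P_0,\FF_q^n]$ are both $N_G(P_0)$-stable. Hence $N_G(P_0)\le\GL_{wd}(q)\times\GL_r(q)$, and $H=C_G(P_0)\cap\GL_{wd}(q)$ is then automatically normalised by $N_G(P_0)$.

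Second, the characteristic-ness of $P_0$ in $P$ needs no ``edge cases'' for odd $\ell$; there is a uniform argument. Suppose $A\unlhd P$ is abelian with nontrivial image $\bar A$ in $W_\ell\le\fS_w$. Since $A$ is abelian it centralises $A\cap P_0$, so $A\cap P_0\le C_{P_0}(\bar A)$. If $\bar A$ has $t<w$ orbits on the $w$ coordinates then $|C_{P_0}(\bar A)|=\ell^{at}$, while $|\bar A|\le\ell^{(w-t)/(\ell-1)}$ (bounding $\bar A$ by the product of Sylow $\ell$-subgroups of the symmetric groups on its orbits). Thus
\[
|A|=|A\cap P_0|\cdot|\bar A|\le\ell^{at+(w-t)/(\ell-1)}<\ell^{aw}=|P_0|
\]
exactly when $a(\ell-1)>1$, which holds for every $a\ge1$ once $\ell\ge3$. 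So $P_0$ is the unique abelian normal subgroup of $P$ of its order, hence characteristic --- no ad hoc patching required. Your observation that this genuinely fails for $\ell=2$ (already in $\GL_2(3)$) is correct and accounts for the parity restriction.
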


The structure of a Sylow $\ell$-normaliser is quite complicated in general,
but by the Reduction Theorem~\ref{thm:IMN} we can instead consider the
intermediate group $M:=\big(\GL_1(q^d)\wr G(d,1,w)\big)\times\GL_r(q)$ which is
much closer to being a finite reductive group like $\GL_n(q)$ itself. Now
taking together \cite{Ma07} and the result of Sp\"ath \cite{Sp10} show:

\begin{thm}[Malle, Sp\"ath (2010)]   \label{thm:MS10}
 Let $G=\GL_n(q)$ and $M$ as above. There is a bijection
 $$\Irr_{\ell'}(G)\rightarrow\Irr_{\ell'}(M).$$
\end{thm}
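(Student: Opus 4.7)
The plan is to parametrize both sides explicitly -- the global side via Green's Jordan decomposition (Theorem~\ref{thm:Green}), the local side via Clifford theory -- and to match the two parametrizations through the combinatorics of $d$-cores and $d$-quotients of partitions.

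On the global side, by Proposition~\ref{prop:ell'GL} together with Corollary~\ref{cor:hooks}, the characters $\rho^{s,\ula}\in\Irr_{\ell'}(G)$ are those for which $s$ is semisimple with $C_G(s)$ containing a Sylow $\ell$-subgroup, and each component partition $\la^i$ has exactly the prescribed number of $d$-hooks (with $d=d_\ell(q)$ and $n=wd+r$). The explicit structure of the $\ell$-tori inside $\GL_n(q)$ forces such an $s$ to split, up to conjugacy, as $s=s_1\oplus t$ with $s_1$ concentrated in the $\GL_1(q^d)^w$-factor of a Sylow $\ell$-torus and $t\in\GL_r(q)$ semisimple, automatically of $\ell'$-order since $r<d$. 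Correspondingly, the combinatorial data of $\ula$ splits: each $\la^i$ is encoded via its $d$-core (attached naturally to $t$) and its $d$-quotient (attached to $s_1$).

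On the local side, the product decomposition $M=M_1\times\GL_r(q)$ with $M_1=\GL_1(q^d)\wr G(d,1,w)$ gives a natural bijection $\Irr(M)\longleftrightarrow\Irr(M_1)\times\Irr(\GL_r(q))$. To parametrize $\Irr(M_1)$ one applies Clifford theory over the abelian normal subgroup $A=\GL_1(q^d)^w\cong(\FF_{q^d}^\times)^w$: each $\theta\in\Irr(A)$ has inertia subgroup $I(\theta)\le M_1$ with $I(\theta)/A$ a subgroup of $G(d,1,w)=C_d\wr\fS_w$ determined by the orbit structure of $\theta$ under the coordinatewise Galois $C_d$-action (given by $x\mapsto x^q$) and the $\fS_w$-permutation action. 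Since $A$ is abelian and the relevant cohomology vanishes, Sp\"ath's extension results in \cite{Sp10} show that $\theta$ extends to $I(\theta)$, and the wreath-product structure then parametrizes $\Irr(M_1)$ by multi-partitions of $w$ indexed by the $\fS_w$-orbits of $C_d$-orbits on $\Irr(\FF_{q^d}^\times)$. Imposing the $\ell'$-condition cuts out the relevant subset.

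The desired bijection is now assembled by matching labels: $C_d$-orbits on $\Irr(\FF_{q^d}^\times)$ correspond by elementary Galois theory to monic irreducible polynomials in $\FF_q[X]$ of degree dividing $d$, which are precisely the irreducible factors appearing in the characteristic polynomial of $s_1$ on the global side; the multi-partition labels on both sides are matched by the classical bijection between partitions of prescribed $d$-weight and pairs consisting of a $d$-core and a $d$-quotient; and the $\GL_r(q)$-factor of $M$ matches the residual $\GL_r(q)$-block of $C_G(s)$ carrying $t$ together with the $d$-cores of the $\la^i$ (here the matching is essentially trivial, since $r<d$ makes the $\ell$-structure of $\GL_r(q)$ vanish). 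The main obstacle is not the combinatorial bookkeeping but the control of character extensions in $M_1$: one must verify that each $\theta\in\Irr(A)$ extends to $I(\theta)$ in a way producing irreducible characters of $M_1$ of the exact predicted degree and with the correct $\ell$-adic valuation, so that the $\ell'$-subset on the local side lines up precisely with the subset forced by Corollary~\ref{cor:hooks} on the global side. This is the technical content of Sp\"ath~\cite{Sp10}, which, combined with the explicit global parametrization of Malle~\cite{Ma07}, yields the theorem.
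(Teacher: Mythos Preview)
Your proposal is correct and follows essentially the same approach as the paper: the paper's proof consists of the single remark that ``the proof relies on combinatorial descriptions of the sets on both sides that coincide'', citing \cite{Ma07} for the global parametrization and \cite{Sp10} for the local one, and your outline fleshes out exactly this strategy via Jordan decomposition, $d$-core/$d$-quotient combinatorics, and Clifford theory for the wreath product. Your sketch is in fact considerably more detailed than what the paper provides; the only minor imprecision is the claim that $s$ literally splits as $s_1\oplus t$ --- the actual bookkeeping in \cite{Ma07} tracks how the Sylow $\ell$-torus sits inside each factor $\GL_{n_i}(q^{d_i})$ of $C_G(s)$ separately --- but this does not affect the correctness of the overall argument.
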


The proof relies on combinatorial descriptions of the sets on both sides that
coincide.

This sketch shows how to find a McKay bijection in the case of $\GL_n(q)$. A
very similar statement also holds for the general unitary groups, but the
proofs are different and more complicated.

Now the general linear groups are in general not quasi-simple; the right
groups to consider are the special linear groups $\SL_n(q)$, where
unfortunately the situation is much less transparent. Still, Cabanes and
Sp\"ath \cite{CS15b} showed how to descend the bijection from
Theorem~\ref{thm:MS10} to $\SL_n(q)$ and thus show that this group is McKay
good for all $\ell$.

\subsection{Groups of Lie type}   \label{subsec:lietypegroups}
We now turn to general groups of Lie type. A finite group $G$ is said to be of
Lie type if $ G=\bG^F$, where $\bG$ is a connected reductive  group  over an
algebraic closure of a finite field with a Frobenius map $F:\bG\rightarrow\bG$
(we refer to \cite{MT} for an introduction to the structure theory of these
groups). A subgroup $\bH $ of $\bG$ is said to be  $F$-stable if $F(h)\in\bH$
for all $h\in\bH$. If $\bH\leq\bG$ is $F$-stable, then $\bH^F$ is a finite
group. A \emph{torus of $\bG$} is a closed connected abelian subgroup of $\bG$
consisting of semisimple elements. The group $\bG$ acts on the  set of tori of
$\bG$;  maximal tori form a single $\bG$-orbit. The group $\bG^F$ acts on the
set of $F$-stable tori of $\bG$ but there is in general more than one
$\bG^F$-orbit  
of $F$-stable maximal tori of $\bG$ and $F$-fixed point subgroups of tori in
different $\bG^F$-classes have different orders. The  $\bG^F$ classes of
maximal tori can be described using Weyl groups. Fix an $F$-stable maximal
torus $\bT$ of $\bG$ and set $W(\bT)= N_\bG(\bT)/\bT$, the Weyl group of $\bT$.
The $F$-action on $\bT$ induces an $F$-action on $W(\bT )$. Elements $w$ and
$w'$ of $W(\bT)$ are said to be $F$-conjugate if $w'=x w F(x)^{-1}$ for some
$x\in W(\bT)$. The $\bG^F$-conjugacy classes of $F$-stable maximal tori are
in bijection with the $F$-conjugacy classes of $W$ which in turn can be
described combinatorially.

\begin{exa} \label{exa:generallineartori} 
Let $\bG =\GL_n(\overline{\FF}_q)$ and $F :\bG\to\bG$ be the standard Frobenius
map which sends every entry of a matrix in $\bG$ to its $q$-th power. Then
$G =\bG^F=\GL_n(q)$. Let $\bT$ be the subgroup of $\bG$ consisting of all
diagonal matrices. Then $\bT$ is an $F$-stable maximal torus of $\bG$. It is
easy to see that $N_\bG(\bT)$ consists of all monomial matrices and
$W(\bT)\cong\fS_n$. Further, since $F$ fixes every permutation matrix the
induced action of $F$ on $W(\bT)$ is trivial. So, the  $F$-conjugacy classes of
$W(\bT)$ are simply the conjugacy  classes of $ W(\bT)$. We obtain a bijection 
$$ \{\text{partitions of $n$}\}\stackrel{\rm {1:1}}{\longrightarrow}
  \{ \text{$F$-stable maximal tori of $\bG $}\} /\bG^F. $$
If $\la =(\la_1,\ldots,\la_s)\vdash n$ corresponds to the  $\bG^F$-class of the
maximal torus $\bT_{\la} $ then   
$$ |\bT^F_{\la}|  = ( q^{\la_1} -1)\cdots  (q^{\la_s}-1). $$ 
 \end{exa} 
 
In their seminal 1976 paper \cite{DeLu76}, Deligne and Lusztig showed how to
construct ordinary $\bG^F$-representations from the $\ell$-adic cohomology
spaces of certain algebraic varieties (now called Deligne--Lusztig varieties)
on which $\bG^F$ acts. For each  $F$-stable maximal torus $\bT$ of $\bG$,
they constructed a pair of $\ZZ$-linear maps
$$\RTG:\ZZ\Irr(\bT^F)\to\ZZ\Irr(\bG^F),\qquad
  \sRTG:\ZZ \Irr(\bG^F)\to\ZZ\Irr(\bT^F).$$
The maps $\RTG$ and $\sRTG$ are called the \emph{Deligne--Lusztig induction and
restriction maps}  respectively. These maps are adjoint to each other with
respect to the standard scalar product on the space of class functions of
$\bG^F$, that is for each $\chi\in\Irr(\bG^F)$, $\theta\in\Irr(\bT^F)$,
$$\langle\chi,\RTG(\theta)\rangle=\langle\sRTG(\chi),\theta\rangle.$$  

For an $F$-stable maximal torus $\bT$ of $\bG$ and $\theta $ an irreducible
character of $ \bT^F$, let $\cE(\bG^F|(\bT,\theta))$ be the subset of
$\chi\in\Irr(\bG^F)$ consisting of those $\chi$ such that
$\langle\chi,\RTG(\theta)\rangle\ne0$. The group $\bG^F$ acts by conjugation
on the set of pairs $(\bT,\theta)$ where $\bT$ is an $F$-stable maximal torus
of $\bG$ and $\theta$ is an irreducible character of $\bT^F$ and this action
preserves the sets $\cE(\bG^F|(\bT, \theta))$, that is for all $g\in\bG^F$,
$\bT$ an $F$-stable maximal torus of $\bG$ and $\theta\in\Irr(\bT^F)$, 
$$\cE(\bG^F|\,^g(\bT,\theta)) = \cE(\bG^F|(\bT,\theta)).$$
The virtual characters $\RTG(\theta)$ as $(\bT,\theta)$ runs over all pairs of
$F$-stable maximal tori $\bT$ of $\bG$ and irreducible characters $\theta$ of
$\bT^F$ ``trap'' all irreducible characters of $\bG^F$:

\begin{thm} [Deligne--Lusztig (1976)]  
 $$\Irr(\bG^F) = \bigcup_{(\bT,\theta)} \cE(\bG^F|(\bT, \theta) )$$
 as $(\bT,\theta)$ runs over the $\bG^F$ conjugacy classes of pairs
 $(\bT, \theta)$ where $\bT$ is an $F$-stable maximal torus of $\bG$ and
 $\theta $ is an irreducible character of $\bT^F$.
\end{thm}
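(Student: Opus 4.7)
The plan is to recast the theorem via adjointness as a spanning statement. By definition, $\chi\in\cE(\bG^F|(\bT,\theta))$ precisely when $\langle\chi,\RTG(\theta)\rangle\ne 0$; by the adjointness $\langle\chi,\RTG(\theta)\rangle=\langle\sRTG(\chi),\theta\rangle$, this is equivalent to $\theta$ occurring with nonzero multiplicity in the virtual torus character $\sRTG(\chi)$. Consequently, the theorem reduces to showing that for every $\chi\in\Irr(\bG^F)$ at least one $\sRTG(\chi)$ is nonzero, or equivalently that the virtual characters $\RTG(\theta)$, as $(\bT,\theta)$ ranges over all pairs, span the space $\CC\Irr(\bG^F)$ of class functions on $\bG^F$.

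To establish the spanning I would aim to exhibit the characteristic function $\gamma_g$ of each $\bG^F$-conjugacy class as an explicit $\CC$-linear combination of $\RTG(\theta)$'s; this forces every $\chi$ into the span. Writing $g=su$ for the Jordan decomposition, the construction of $\gamma_g$ combines two orthogonality phenomena. First, for $s$ lying in an $F$-stable maximal torus $\bT^F$, the sum $\sum_{\theta\in\Irr(\bT^F)}\theta(s^{-1})\theta$ is, by orthogonality of characters of the abelian group $\bT^F$, supported on $s$; applied to $\RTG$, this isolates the $s$-part. Second, on the unipotent side one uses the Green functions $Q^{\bT}_{C_\bG^\circ(s)}(u)$ entering the Deligne--Lusztig character formula, which by their own orthogonality relation extract the $u$-part. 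Averaging appropriately over $F$-stable maximal tori $\bT\subseteq C_\bG^\circ(s)$ then produces $\gamma_g$.

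Concretely, the strategy breaks into the following steps: (i) prove the Deligne--Lusztig character formula expressing $\RTG(\theta)(su)$ as a sum over $\bG^F$-translates of $s$ into $\bT^F$ of products of $\theta$-values and Green functions of $C_\bG^\circ(s)$; (ii) prove the Mackey formula for compositions of $\RTG$ and $\sRTG$, yielding both the intertwining formula
$\langle\RTG(\theta),R_{\bT'}^\bG(\theta')\rangle$
and an orthogonality relation for the Green functions; (iii) combine (i) and (ii) via the two orthogonalities above to produce $\gamma_g$ as an explicit linear combination of the $\RTG(\theta)$; (iv) conclude the spanning and hence the theorem.

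The main obstacle is the geometric input underlying (i) and (ii): both the character formula and the Mackey formula are the technical heart of \cite{DeLu76}, resting on Lefschetz fixed-point computations on the $\ell$-adic cohomology of the Deligne--Lusztig varieties attached to elements of the Weyl group, together with a careful analysis of the $\bG^F$-action on these varieties. Once these tools are in place, the combinatorics of step (iii) is essentially linear algebra, although some care is needed with the signs $\epsilon_\bG,\epsilon_\bT$ and with the $F$-conjugacy classes in $W(\bT)$ that index the $\bG^F$-classes of $F$-stable maximal tori.
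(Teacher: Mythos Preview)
The paper does not supply a proof of this theorem; it is quoted from \cite{DeLu76} as a foundational result. So there is no ``paper's own proof'' to compare against, and I assess the soundness of your outline instead.

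Your reformulation in the first paragraph contains an error that propagates through the rest. The statement that every $\chi\in\Irr(\bG^F)$ satisfies $\langle\chi,\RTG(\theta)\rangle\ne0$ for some pair $(\bT,\theta)$ is \emph{not} equivalent to the $\RTG(\theta)$ spanning $\CC\,\Irr(\bG^F)$. The former says only that no irreducible character is orthogonal to the span (the subspace of \emph{uniform} class functions); this is strictly weaker than spanning, and in fact the uniform functions form a proper subspace whenever some centraliser $C_\bG(g)$ is disconnected. For instance, in $\SL_2(q)$ with $q$ odd the two non-identity unipotent classes are exchanged by the outer diagonal automorphism and every $\RTG(\theta)$ takes the same value on both, so the characteristic function of either class individually is not uniform. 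Hence your step~(iii)---expressing $\gamma_g$ for arbitrary $g$ as a combination of $\RTG(\theta)$'s---cannot succeed as stated.

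The fix, which is what Deligne and Lusztig actually do in \cite[Cor.~7.5--7.7]{DeLu76}, is to aim lower: show only that the characteristic function $\gamma_1$ of the identity (equivalently the regular character of $\bG^F$) is uniform. This does follow from your ingredients (i) and~(ii), specialised to $g=1$, where the Green-function orthogonality collapses cleanly. Since every $\chi$ occurs in the regular character with multiplicity $\chi(1)>0$, taking inner products then gives $\langle\chi,\RTG(\theta)\rangle\ne0$ for some $(\bT,\theta)$, which is exactly the theorem. So your steps (i) and~(ii) are the right tools, but the target in~(iii) should be $\gamma_1$, not every $\gamma_g$.
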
 

\subsection{Characters of groups of Lie type}
Let $\bG$ be connected reductive with a Frobenius map $F:\bG\rightarrow\bG$.
If $\bG$ is simple of simply connected type (like, for example, $\bG=\SL_n$),
then $\bG^F$ is, apart from a few
exceptions, a finite quasi-simple group of Lie type. Moreover, all such groups,
except for the Ree and Suzuki groups for which a slightly more general setup is
needed, are obtained in this way. This turns out to be the right setting to
study the character theory of the families of groups of Lie type.

Recall that for $\bT\le\bG$ an $F$-stable maximal torus, and
$\theta\in\Irr(\bT^F)$ there is an associated virtual Deligne--Lusztig
character $\RTG(\theta)$. As for $\GL_n(q)$ the set of irreducible
characters of $G=\bG^F$ can be partitioned into Lusztig series, as follows.
Define a graph on $\Irr(G)$ by connecting two characters
$\chi,\chi'\in\Irr(G)$ if there exists a pair $(\bT,\theta)$ such that
$\langle\chi,\RTG(\theta)\rangle\ne0\ne\langle\chi',\RTG(\theta)\rangle$.
The connected components of this graph are the \emph{Lusztig series in
$\Irr(G)$}. This also defines an equivalence relation on the set of pairs
$(\bT,\theta)$ which seems a bit mysterious. Lusztig has shown that the
Lusztig series can instead also be parametrised by semisimple classes of
a group $G^*$ closely related to $G$. It is obtained from the Langlands
dual group $\bG^*$ of $\bG$ as fixed points under a Frobenius map that we will
also denote
by $F$. Here, the Langlands dual $\bG^*$ has root datum obtained from that of
$\bG$ by exchanging character group and cocharacter group. For example,
$$\GL_n^*=\GL_n,\ \SL_n^*=\PGL_n,\ \Sp_{2n}^*=\SO_{2n+1},\ E_8^*=E_8,\ldots$$
One usually writes $\cE(G,s)\subseteq\Irr(G)$ for the Lusztig series indexed by
$s\in G^*$.

\begin{exa}
Let $G=\GL_n(q)$, $s\in G^*=\GL_n(q)$ semisimple. Let $\bT\le C_\bG(s)$ be an
$F$-stable maximal torus. Then $s\in T=\bT^F$ corresponds to
some $\theta\in\Irr(T)$ under the isomorphisms $T\cong\Irr(T)$ induced by the
duality between $\bG$ and $\bG^*$. Then
$$\cE(G,s)=\bigcup_{T,\theta} \cE(\bG^F|(\bT,\theta)),$$
the union running over all such pairs $(T,\theta)$.
\end{exa}

In particular when $s=1$ then all tori $T$ contain~$s$, and $s$ corresponds to
the trivial character $1_T$ of $T$, so
$$\cE(G,1)=\bigcup_T\cE(\bG^F|(\bT,1_T))$$
and these are the \emph{unipotent characters of $G$}. Lusztig has shown that
they are parametrised independently of $q$ by suitable combinatorial data only
depending on the complete root datum (the type) of $(\bG,F)$. For example, we
had already seen that for $G=\GL_n(q)$, the unipotent characters are
parametrised by partitions of $n$, independently from $q$. As for $\GL_n(q)$
there is a Jordan decomposition, which we state here only in a special
situation, see \cite{LuB}:

\begin{thm}[Lusztig (1984)]   \label{thm:Jor}
 Assume that $s\in G^*$ is such that $C_{\bG^*}(s)$ is connected. Then there is
 a bijection
 $$J_s:\cE(G,s)\longrightarrow\cE(C_{G^*}(s),1),$$
 with
 $$\chi(1)=|G^*:C_{G^*}(s)|_{p'}\cdot J_s(\chi)(1).$$
\end{thm}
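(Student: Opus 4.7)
The plan is to leverage the parametrisation of Lusztig series via the dual group together with Lusztig's combinatorial classification of unipotent characters. Under the duality between $\bG$ and $\bG^*$ there is a canonical bijection between $G$-conjugacy classes of pairs $(\bT,\theta)$ (with $\bT$ an $F$-stable maximal torus of $\bG$ and $\theta\in\Irr(\bT^F)$) and $G^*$-conjugacy classes of pairs $(\bT^*,s')$. Fixing our semisimple $s$ and setting $\bH:=C_{\bG^*}(s)$, $H:=\bH^F$, the pairs relevant to $\cE(G,s)$ correspond to $F$-stable maximal tori $\bT^*$ of $\bG^*$ containing $s$, taken up to $H$-conjugacy. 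The hypothesis that $\bH$ is connected forces every such $\bT^*$ to lie in $\bH$; conversely $s\in Z(\bH)$ lies in every maximal torus of $\bH$. Hence these are precisely the $H$-conjugacy classes of $F$-stable maximal tori of $\bH$, which also parametrise the pairs $(\bT^*,1)$ responsible for $\cE(H,1)$.

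Building on this bookkeeping match, I would construct $J_s$ by identifying both $\cE(G,s)$ and $\cE(H,1)$ with the same combinatorial data. On the $H$-side, since $\bH$ is connected reductive, Lusztig's classification parametrises the unipotent characters of $H$ by data depending only on the root datum and $F$-action of $\bH$, organised into families equipped with non-abelian Fourier transform matrices attached to $W(\bH)$. On the $G$-side, Lusztig's decomposition of $\RTG(\theta)$ (with $(\bT,\theta)$ dual to $(\bT^*,s)$) into irreducibles in $\cE(G,s)$ expresses the multiplicities through exactly the same matrices attached to $W(\bH)$. Declaring $J_s(\chi)$ to be the unipotent character of $H$ carrying the same combinatorial label then gives the desired bijection.

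The degree formula falls out of the explicit Deligne--Lusztig degree $\RTG(\theta)(1)=\epsilon_\bG\,\epsilon_\bT\,|G:\bT^F|_{p'}$ and its analogue $R_{\bT^*}^{\bH}(1)(1)=\epsilon_\bH\,\epsilon_{\bT^*}\,|H:(\bT^*)^F|_{p'}$. Since the duality pairs $\bT$ with $\bT^*$ so that $|\bT^F|=|(\bT^*)^F|$, and since $|G|=|G^*|$, the ratio of these two Deligne--Lusztig degrees equals $|G|_{p'}/|H|_{p'}=|G^*:H|_{p'}$. As the multiplicities of $\chi$ and $J_s(\chi)$ in the respective virtual characters coincide by the very construction of $J_s$, the same ratio must hold between $\chi(1)$ and $J_s(\chi)(1)$.

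The principal obstacle is the passage from ``uniform'' information (sums of Deligne--Lusztig virtual characters) to a genuine bijection on individual irreducibles, since uniform class functions do not separate $\Irr(G)$ in general. Lusztig resolves this via a case-by-case determination of families and of the non-abelian Fourier transform matrices attached to each Weyl group type, and this classification is the technical heart of the argument. The hypothesis that $C_{\bG^*}(s)$ be connected is essential here: it ensures that $\bH$ is itself a connected reductive group, so that the unipotent character theory already developed applies to it directly, sidestepping the complications arising for disconnected centralisers (subsequently handled by Lusztig and Digne--Michel via regular embeddings and Clifford theory).
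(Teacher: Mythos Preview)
The paper does not give its own proof of this theorem: it is stated as a result of Lusztig, with the citation ``see \cite{LuB}'' immediately preceding the statement, and no argument is supplied. So there is no proof in the paper to compare your proposal against.

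That said, your sketch is a faithful high-level outline of how the proof in Lusztig's book actually proceeds: the duality bijection between $G$-classes of pairs $(\bT,\theta)$ and $G^*$-classes of pairs $(\bT^*,s')$; the observation that connectedness of $C_{\bG^*}(s)$ makes the relevant tori exactly the $F$-stable maximal tori of $\bH$; the matching of both sides to the same combinatorial data via Lusztig's families and non-abelian Fourier transform; and the degree identity from the Deligne--Lusztig degree formula together with $|G|=|G^*|$ and $|\bT^F|=|(\bT^*)^F|$. You also correctly flag that the passage from uniform functions to individual irreducibles is the hard part and is handled type-by-type. As a sketch this is fine; just be aware that what you have written is an outline rather than a proof, and the full argument occupies a substantial portion of \cite{LuB}.
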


Lusztig called this bijection the \emph{Jordan decomposition} of irreducible
characters. The assumption on $s$ is satisfied for example for all semisimple
elements in $\GL_n(q)$, and more generally for all semisimple elements in
groups $\bG$ with connected centre, like, for example, $\PGL_n$ or $E_8$.

\begin{exa}
Let $s\in G^*$ be such that $C_{\bG^*}(s)=\bT^*$ is a maximal torus of $\bG^*$.
The element $s$ is then called \emph{regular}. Regular semisimple elements are
dense in $\bG^*$, so this assumption is satisfied for ``most'' elements. In
this case
$|\cE(G,s)|=1$, and $\chi(1)=|G^*:T^*|_{p'}$ for $\{\chi\}=\cE(G,s)$.
\end{exa}

If $C_{\bG^*}(s)$ is disconnected, the situation is considerably more
complicated, but still Lusztig obtained an analogue of Jordan decomposition
\cite{Lu88}.

\begin{exa}
Let $G=\SL_2(q)$ with $q$ odd, so $G^*=\PGL_2(q)$. The semisimple elements in
$G^*$ are: the trivial element, two classes of elements of order~2 with
disconnected centraliser (one lying inside $\PSL_2(q)$, one outside), and all
other semisimple elements are regular with centraliser of order either $q-1$ or
$q+1$. Letting $s_1,s_2$ denote representatives of the two classes of
involutions we thus have
$$\Irr(G)=\cE(G,1)\cup\cE(G,s_1)\cup\cE(G,s_2)\cup
          \bigcup_{s: s^2\ne1}\cE(G,s),$$
where $|\cE(G,1)|=|\cE(G,s_i)|=2$, $|\cE(G,s)|=1$; here $s_1,s_2$ are
representatives of the two classes of involutions.
\end{exa}

\subsection{Towards McKay's conjecture for groups of Lie type}
Again it is straightforward from the Jordan decomposition to classify the
characters in $\Irr_{\ell'}(G)$:

\begin{prop}  \label{prop:ell'}
 Let $\chi\in\cE(G,s)$. Then $\chi\in\Irr_{\ell'}(G)$ if and only if $s$
 centralises a Sylow $\ell$-subgroup of $G^*$ and moreover
 $J_s(\chi)\in\cE(C_{G^*}(s),1)$ is contained in $\Irr_{\ell'}(C_{G^*}(s))$.
\end{prop}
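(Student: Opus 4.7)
The plan is to derive this directly from the degree formula for the Jordan decomposition, combined with the observation that the relevant prime $\ell$ differs from the defining prime~$p$.

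First I would invoke Theorem~\ref{thm:Jor} (and, where the centraliser is disconnected, Lusztig's extension from \cite{Lu88}) to obtain the degree formula
$$\chi(1)=|G^*:C_{G^*}(s)|_{p'}\cdot J_s(\chi)(1).$$
Since $s$ is semisimple, $|C_{G^*}(s)|$ has the same $\ell$-part as its $p'$-part for any prime $\ell\ne p$, and likewise for $|G^*|$; hence passing to $\ell$-parts in the formula above yields
$$\chi(1)_\ell=|G^*:C_{G^*}(s)|_\ell\cdot J_s(\chi)(1)_\ell.$$
Both factors on the right are positive integers dividing $\chi(1)$, so the condition $\chi\in\Irr_{\ell'}(G)$, i.e.\ $\chi(1)_\ell=1$, is equivalent to both factors being equal to~$1$.

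Next I would unpack these two conditions. The factor $|G^*:C_{G^*}(s)|_\ell$ equals~$1$ exactly when $|C_{G^*}(s)|_\ell=|G^*|_\ell$, that is, when $C_{G^*}(s)$ contains a full Sylow $\ell$-subgroup of $G^*$; by a Sylow argument this is equivalent to $s$ centralising \emph{some} Sylow $\ell$-subgroup of $G^*$. The factor $J_s(\chi)(1)_\ell$ equals~$1$ exactly when $J_s(\chi)\in\Irr_{\ell'}(C_{G^*}(s))$. Combining these two equivalences gives the proposition.

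The main obstacle is less mathematical than bookkeeping: one must invoke the right form of the Jordan decomposition. Theorem~\ref{thm:Jor} as stated requires $C_{\bG^*}(s)$ to be connected, which holds automatically when $\bG$ has connected centre, but the degree formula — and hence the entire argument above — goes through verbatim in the general case once one interprets $J_s(\chi)$ as a unipotent character of $C_{G^*}(s)$ in the sense of Lusztig. With that understood, the whole proof reduces to a one-line comparison of $\ell$-parts.
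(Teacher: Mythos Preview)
Your argument is correct and is precisely the ``straightforward from the Jordan decomposition'' computation the paper alludes to in introducing the proposition (no detailed proof is given there). Your handling of the disconnected-centraliser case via \cite{Lu88} is also appropriate, matching the paper's remark following Theorem~\ref{thm:Jor}.
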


So again our question is reduced to studying unipotent characters. Their
degrees are given by polynomial expressions in the field size~$q$, as we
already saw for $\GL_n(q)$ with the hook formula. It is combinatorially easy
to determine the $\ell'$-degrees from this for classical types; for
exceptional types this is just a finite task.

Now let's turn again to the local picture. We set $d=d_\ell(q)$, where we
recall that $d_\ell(q)$ denotes the order of $\ell$ modulo $q$. Assume  
for simplicity that $\ell\ne2$. We describe the picture for $\bG$ of classical
type, that is $G=G_n(q)=\Sp_{2n}(q)$, $\SO_{2n+1}(q)$ or $\SO_{2n}^\pm(q)$.
First assume that $d$ is odd and write $n=ad+r$ with $0\le r<d$. Then there is
a torus $T_d=\GL_1(q^d)\times\cdots\times\GL_1(q)$ ($w$ factors) of $G$ such
that
$$N_G(P)\le N_G(T_d)=T_d.(C_{2d}\wr\fS_w)\times G_r(q)$$
contains the normaliser of a Sylow $\ell$-subgroup $P$ of $G_n(q)$ (see
\cite[\S3.2]{BMM}).

\begin{exa}
Assume $d=1$ and $G=G_n(q)\ne\SO_{2n}^-(q)$. Then $T_d\cong C_{q-1}^n$ is a
maximally split torus, with $N_G(T_d)=T_d.W$, with $W$ the Weyl group of $G$.
\end{exa}

If instead $d=2e$ is even, then the same type of result holds, we just have to
replace the cyclic group $\GL_1(q^d)=C_{q^d-1}$ by the cyclic group
$\GU_1(q^e)=C_{q^e+1}$.

\begin{thm}[Malle, Sp\"ath (2010)]   \label{thm:MS10general}
 Let $\bG$ be simple of simply connected type, $\ell\ne p$ a prime, and
 $d=d_\ell(q)$. Then there
 exists a bijection $\Omega:\Irr_{\ell'}(G)\rightarrow\Irr_{\ell'}(N_G(T_d))$
 with $\Omega(\chi)(1)\equiv\pm\chi(1)\pmod\ell$ for all $\chi$, unless one
 of
 \begin{enumerate}
  \item[$\bullet$] $\ell=3$, $G=\SL_3(q)$, $\SU_3(q)$, or $G_2(q)$ with
   $q\equiv2,4,5,7\pmod9$, or
  \item[$\bullet$] $\ell=2$, $G=\Sp_{2n}(q)$ with $q\equiv3,5\pmod8$.
 \end{enumerate}
 This bijection can be chosen to preserve central characters.
\end{thm}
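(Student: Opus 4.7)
The plan is to parametrise both sides of the proposed bijection by identical combinatorial data and match them piece by piece, generalising the strategy used for $\GL_n(q)$ in Theorem~\ref{thm:MS10}. On the global side, by Jordan decomposition (Theorem~\ref{thm:Jor}, extended to disconnected centralisers via \cite{Lu88}) each $\chi\in\Irr_{\ell'}(G)$ is encoded by a pair $(s,\psi)$ with $s\in G^*$ a semisimple $\ell'$-element and $\psi\in\cE(C_{G^*}(s),1)$ a unipotent character. By Proposition~\ref{prop:ell'}, the condition of $\ell'$-degree forces $s$ to centralise a Sylow $\ell$-subgroup of $G^*$; in view of Lemma~\ref{lem:div}, this means $s$ lies in a Sylow $\Phi_d$-torus of $\bG^*$, and hence $C_{\bG^*}(s)$ contains the $d$-split Levi $\bL^*=C_{\bG^*}(\bS)$. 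Via the $d$-Harish-Chandra theory of Broué--Malle--Michel \cite{BMM}, the $\ell'$-unipotent character $\psi$ is then encoded by a pair $(\lambda,\eta)$, where $\lambda$ is a $d$-cuspidal unipotent character of a $d$-split Levi of $C_{\bG^*}(s)$ and $\eta$ is an irreducible character of the corresponding relative Weyl group; an analysis of unipotent degrees (for classical types via symbols, for exceptional types case-by-case) shows that outside the listed bad cases one may assume $\lambda$ is trivial, so the global datum reduces to a triple $(s,\eta)$.

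On the local side, the semidirect product structure $N_G(T_d)=T_d.W$ with $W$ a product of $C_{2d}\wr\fS_w$ and the Weyl group of $G_r(q)$ allows Clifford theory with respect to the abelian kernel $T_d$ to parametrise $\Irr(N_G(T_d))$ by pairs $(\theta,\eta)$ with $\theta\in\Irr(T_d)$ taken up to $W$-conjugacy and $\eta$ an irreducible character of the stabiliser $W_\theta$ (modulo an extension issue that is trivial for $\ell$ as above). The $\ell'$-degree condition translates to $\theta$ being stable under a Sylow $\ell$-subgroup of $W$ and $\eta$ itself of $\ell'$-degree. Under the duality $T_d\cong\Irr(T_d)$ coming from Langlands duality, $\theta$ corresponds to a semisimple element $s$ in the dual torus, and a direct check shows that $W_\theta$ coincides with the relative Weyl group $W_{C_{G^*}(s)}$ attached to $C_{G^*}(s)$ on the global side. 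I would then declare $\Omega$ by matching the global triple $(s,\eta)$ with the local pair $(\theta,\eta)$ (with $\eta$ identified on both sides); preservation of central characters follows since, under Lusztig duality, the central character of an irreducible of $G$ is determined by the coset $s\,Z(\bG^*)^F$, which is recorded equally by the restriction of $\theta$ to $Z(G)\le T_d$.

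The degree congruence then reduces to the polynomial identity
$$|G^*:C_{G^*}(s)|_{p'}\,\psi(1)\equiv\pm|W:W_\theta|\,\eta(1)\pmod\ell.$$
Writing both sides as products of evaluations of cyclotomic polynomials $\Phi_m(q)$, Lemma~\ref{lem:div} reduces all factors with $m\ne d$ to $\pm1$ modulo $\ell$, while the $\Phi_d$-contributions cancel because $C_{G^*}(s)$ and $W_\theta$ share the same maximal $\Phi_d$-torus by construction. The main obstacle, as usual, is the exceptional cases: when $\ell=3$ and $G\in\{\SL_3(q),\SU_3(q),G_2(q)\}$ with $q$ in the excluded residues, or $\ell=2$ with $G=\Sp_{2n}(q)$ and $q\equiv3,5\pmod 8$, the order of the relative Weyl group picks up an unexpected factor of $\ell$ and the simplification $\lambda=1$ fails, forcing $d$-cuspidal unipotent characters of positive $\ell$-defect into the count and breaking the clean Clifford-theoretic match. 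A secondary (and pervasive) difficulty is the case of $s$ with disconnected centraliser $C_{\bG^*}(s)$, where one must appeal to \cite{Lu88} and verify that the resulting cocycles are trivial on the relevant characters; this, together with the delicate extension of characters of $T_d$ to their inertia subgroups in $W$ when $\ell\mid|W|$, is where most of the technical work of the proof is concentrated.
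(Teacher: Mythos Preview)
Your proposal is correct and follows exactly the strategy the paper indicates: the one-line proof in the text reads ``The proof is obtained by parametrising both sides by the same combinatorial data,'' and your sketch is a faithful elaboration of this, with Jordan decomposition plus $d$-Harish-Chandra theory on the global side and Clifford theory for $T_d\unlhd N_G(T_d)$ on the local side, matched via duality. One minor caution: the reduction ``$\lambda$ is trivial'' is an oversimplification (non-trivial $d$-cuspidal unipotent characters do occur for classical types), but the combinatorial matching of $\ell'$-degree characters with $\Irr_{\ell'}$ of the relative Weyl group goes through regardless, as in \cite{Ma07} and \cite{Sp10}.
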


The proof is obtained by parametrising both sides by the same combinatorial
data.

For the listed exceptions $N_G(T_d)$ does not even contain a Sylow
$\ell$-subgroup; for example when $G=\Sp_2(q)\cong\SL_2(q)$ with
$q\equiv3,5\pmod8$ a Sylow 2-subgroup is quaternion and thus cannot be
contained in $N_G(T_d)$ which is an extension of a cyclic group of order
$q\pm1$ by a group of order~2.

Still the exceptions were shown to be McKay good \cite{Ma08b}. Note that the
group $N_G(T_d)$ only depends on $d$, but not on $\ell$.
Theorem~\ref{thm:MS10general} also
gives the Isaacs--Navarro refinement from Conjecture~\ref{conj:IN}.

Now, what's missing for proving McKay goodness? Equivariance and Clifford
theory!
Recall: for $G$ quasi-simple of Lie type, $\Out(G)$ is made up of diagonal,
graph and field automorphisms (see e.g.\cite[Thm.~24.24]{MT}):
\begin{enumerate}
\item diagonal automorphisms are induced e.g. by the embedding of $\SL_n(q)$
 into $\GL_n(q)$, or $\Sp_{2n}(q)$ into $\CSp_{2n}(q)$,
\item graph automorphisms come from the Dynkin diagram (e.g., the
 transpose-inverse automorphism for $\SL_n(q)$, $n\ge3$, or triality on
 $D_4(q)$),
\item field automorphisms come from the field $\FF_q$ over which $G$ is
 defined.
\end{enumerate}

\begin{exa}
The worst case, in the sense that the structure of the outer automorphism
group is most complicated, occurs for $G=\Spin_8^+(q)$ with $q\equiv1\mod2$;
here $\Out(G)=2^2.\fS_3.C_f$, where $q=p^f$. \par
Nice cases (with small outer automorphism group) are, by contrast,
$G=E_8(q)$ or $G=\Sp_{2n}(q)$ with $q$ even; here $\Out(G)=C_f$ is cyclic.
\end{exa}

\begin{thm}[Cabanes--Sp\"ath (2013)]   \label{thm:CS13}
 Let $S$ be simple of Lie type such that $\Out(S)$ is cyclic, then the
 bijection in Theorem~\ref{thm:MS10general} can be made equivariant. In
 particular, $S$ is then McKay good.
\end{thm}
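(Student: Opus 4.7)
The plan is to verify the four inductive McKay-good conditions (taking $M = N_G(T_d)$, or the replacement local subgroup of \cite{Ma08b} in the small-rank exceptional cases of Theorem~\ref{thm:MS10general}). Conditions (1) and (2) — the existence of a bijection $\Omega:\Irr_{\ell'}(G)\to\Irr_{\ell'}(M)$ and its compatibility with central characters — are already supplied by Theorem~\ref{thm:MS10general}. What remains is the equivariance condition (3) and the agreement of Clifford theories (4). The cyclicity of $\Out(S)$ is the ingredient that will reduce (4) to (3), so the bulk of the work is to arrange $\Aut(G)_P$-equivariance.

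For equivariance, I would work directly with the combinatorial parametrisations of both sides underlying the proof of Theorem~\ref{thm:MS10general}. Globally, Jordan decomposition parametrises $\Irr_{\ell'}(G)$ by pairs $(s,\mu)$ with $s$ a semisimple $\ell'$-class of $G^*$ centralising a Sylow $\ell$-subgroup of $G^*$ and $\mu$ a unipotent $\ell'$-character of $C_{G^*}(s)^F$; in the classical types the latter are labelled by $d$-cores and $d$-quotients of partitions. Locally, $\Irr_{\ell'}(N_G(T_d))$ is parametrised by pairs consisting of a linear character of $T_d$ and an irreducible character of the relative Weyl group $N_G(T_d)/T_d$, a wreath product involving $G(d,1,w)$. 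The three families generating $\Out(G)$ act compatibly on these data: field automorphisms raise $q$ and permute semisimple classes of $G^*$ and characters of $T_d$ in parallel; diagonal automorphisms act through the component group of $Z(\bG^*)^F$ and twist both sides by central characters; graph automorphisms act via the automorphism of the root datum. Since $\Out(G)$ is generated by a single element $\sigma$, it is enough to check compatibility of $\Omega$ with $\sigma$, which reduces to a combinatorial verification on the labelling sets; when an $\Aut(G)_P$-orbit is not matched, one may adjust $\Omega$ within that orbit without disturbing (1) or (2).

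Condition (4) is where the cyclicity hypothesis pays off. The quotient of $\Aut(G)_P$ modulo $\operatorname{Inn}(G)\cap\Aut(G)_P$ embeds in $\Out(G)$ and is therefore cyclic. Hence for any $\chi\in\Irr_{\ell'}(G)$ with stabiliser $A_\chi\le\Aut(G)_P$, the group $A_\chi G/G$ is cyclic, so $H^2(A_\chi G/G,\CC^\times)=0$, and $\chi$ extends to $A_\chi G$. Gallagher's theorem then puts $\Irr(A_\chi G\mid\chi)$ in bijection with $\Irr(A_\chi G/G)$ via twisting by linear characters. Equivariance gives $A_{\Omega(\chi)}=A_\chi$, and the same analysis applies to $\Omega(\chi)$ in $M$, so the Clifford theories above $\chi$ and above $\Omega(\chi)$ are each parametrised by $\Irr(A_\chi G/G)$. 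Matching them amounts to aligning the choice of extension; this can be done by a final twisting adjustment of $\Omega$ on each orbit, compatibly with the central-character condition already imposed.

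The main obstacle I anticipate is the explicit case-by-case verification of the combinatorial equivariance of $\Omega$ under graph and field automorphisms, particularly in the classical types of small rank and in the exceptional cases of Theorem~\ref{thm:MS10general} where the local subgroup is not $N_G(T_d)$; each of these requires a separate argument using the explicit combinatorial model in the relevant Lie type. Once this combinatorial matching is in place, the cyclicity of $\Out(S)$ absorbs every Clifford-theoretic subtlety, delivering the McKay-goodness of $S$.
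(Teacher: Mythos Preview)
The paper is a survey and does not itself prove Theorem~\ref{thm:CS13}; it simply states the result and attributes it to Cabanes--Sp\"ath. So there is no proof in the paper to compare against line by line. That said, the paper does give strong hints about the shape of the argument, and your proposal matches those hints well.

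Your reduction of condition~(4) to the cyclicity hypothesis is exactly what the paper signals: immediately after listing the four McKay-good conditions it remarks that condition~(4) ``is satisfied automatically for example if $\Out(G)$ is cyclic''. Your $H^2$-vanishing argument is the standard justification for this. Likewise, your identification of the equivariance condition~(3) as the genuine content is in line with the paper's discussion following Theorem~\ref{thm:CS13}, where the open problem of determining the $\Aut(G)$-action on $\Irr(G)$ is highlighted as the obstruction in the non-cyclic case. One small point: you should be explicit that $\Out(G)=\Out(S)$ for $G$ the universal covering group of $S$, so the cyclicity hypothesis on $\Out(S)$ transfers to $\Out(G)$; this is routine but worth stating. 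Beyond that, your outline is sound, and the honest acknowledgement that the equivariance verification is a case-by-case combinatorial check (with no uniform argument) reflects the actual state of the proof in the literature.
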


In general, we need to solve the following hard problem:

\begin{prob}
 For $G$ quasi-simple of Lie type, determine the action of $\Aut(G)$ on
 $\Irr(G)$.
\end{prob}
 
Partial results are available: Lusztig determined the action of diagonal
automorphisms: they leave $\chi\in\cE(G,s)$ invariant unless possibly when
$C_{\bG^*}(s)$ is disconnected.

Also, the action of all automorphisms is known on Lusztig series where
 $C_{\bG^*}(s)$ is connected.

\begin{exa}[Lusztig]
Consider the case of unipotent characters $\chi\in\cE(G,1)$. If $G$ is
quasi-simple then any automorphism of $G$ fixes all unipotent characters,
unless $G$ is of type $D_{2n}$, or $B_2$, $F_4$ in characteristic~2, or
type $G_2$ in characteristic~3 (see e.g. \cite{Ma08b}).
\end{exa}

Let $\bG\hookrightarrow\tilde\bG$ be a \emph{regular embedding}, that is
$\tilde\bG$ is a connected reductive group with a connected center and the same
derived subgroup as $\bG$. For example, the inclusion of $\SL_n$ in $\GL_n$
is a regular embedding. We assume that the Frobenius endomorphism $F$ extends
to a Frobenius morphism, also denoted $F$, of $\tilde\bG$. The action of
$\tilde G=\tilde\bG^F$ on $G$ is by inner-diagonal automorphisms. Denote by
$D$ the group of graph and field automorphisms of $\tilde G$. Then Sp\"ath
\cite{Sp12} showed the following:

\begin{thm}[Criterion of Sp\"ath]   \label{thm:crit}
 Assume there is an $\Aut(G)_P$-equivariant bijection
 $\tilde\Omega:\Irr_{\ell'}(\tG)\rightarrow\Irr_{\ell'}(\tM)$ compatible with
 multiplication by $\Irr(\tilde G/G)$. If
 \begin{enumerate}
  \item[$\bullet$] for every $\tchi\in\Irr_{\ell'}(\tG)$ there is
   $\chi\in\Irr_{\ell'}(G|\tchi)$ with
   $$(\tG\semidi D)_\chi=\tG_\chi\semidi D_\chi$$
   and $\chi$ extends to $(G\semidi D)_\chi$, and
  \item[$\bullet$] the analogous condition holds on the local side,
 \end{enumerate}
 then $G/Z(G)$ is McKay good for $\ell$.
\end{thm}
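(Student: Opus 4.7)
The plan is to use the hypotheses to descend the given $\tilde G$-bijection $\tilde\Omega$ to an analogous bijection for $G$ itself, and then verify the four defining properties of being McKay good for $S = G/Z(G)$; write $M \le G$ for the corresponding intermediate local subgroup (e.g.\ $M = \tilde M \cap G$, containing $N_G(P)$).

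As a first step I would exploit Clifford theory with respect to the abelian quotient $\tilde G/G$. Since $\tilde\Omega$ is compatible with multiplication by $\Irr(\tilde G/G)$, it descends to a bijection between $\Irr(\tilde G/G)$-orbits on $\Irr_{\ell'}(\tilde G)$ and on $\Irr_{\ell'}(\tilde M)$; via the standard Clifford correspondence these orbits are in bijection with the $\tilde G$-orbits on $\Irr_{\ell'}(G)$ and the $\tilde M$-orbits on $\Irr_{\ell'}(M)$, respectively. To upgrade this orbit bijection to an honest character bijection, for each $\tilde\chi \in \Irr_{\ell'}(\tilde G)$ I would invoke hypothesis~(1) to select a representative $\chi \in \Irr_{\ell'}(G|\tilde\chi)$ satisfying $(\tilde G \semidi D)_\chi = \tilde G_\chi \semidi D_\chi$ and extending to $(G \semidi D)_\chi$, and use the analogous local hypothesis to select a matching $\eta \in \Irr_{\ell'}(M|\tilde\Omega(\tilde\chi))$. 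Setting $\Omega(\chi) := \eta$ and extending to all of $\Irr_{\ell'}(G)$ by $\Aut(G)_P$-equivariance produces a well-defined map, the $\Irr(\tilde G/G)$-compatibility of $\tilde\Omega$ guaranteeing independence of the representative chosen inside each orbit.

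Conditions~(2) (preservation of central characters) and~(3) ($\Aut(G)_P$-equivariance) in the McKay good definition are then fairly direct: the former follows from the analogous property of $\tilde\Omega$ because $Z(G) \le Z(\tilde G)$ acts compatibly through restriction, and the latter is built into the construction, since $\Aut(G)_P$ is generated by inner-diagonal automorphisms (coming from $\tilde G$) and by $D$, both of which the selection respects. The main obstacle will be verifying condition~(4), namely that the 2-cocycle on $(\Aut(G)_P)_\chi / G_\chi$ governing extensions of $\chi$ coincides with that governing extensions of $\Omega(\chi) = \eta$. Here the splitting $(\tilde G \semidi D)_\chi = \tilde G_\chi \semidi D_\chi$ is decisive: it decomposes each cocycle into a $\tilde G_\chi / G_\chi$-part and a $D_\chi$-part. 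The extendibility of $\chi$ to $(G \semidi D)_\chi$ trivialises the $D_\chi$-part on the global side, and likewise for $\eta$ on the local side, while the $\tilde G / G$-parts agree by the $\Irr(\tilde G/G)$-compatibility of $\tilde\Omega$. Chasing through the resulting character-triple isomorphisms yields equality of the two cocycles, so that $G$, and hence $S = G/Z(G)$, is McKay good at $\ell$.
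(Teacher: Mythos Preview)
The paper does not actually prove this theorem: it is stated with attribution to Sp\"ath and a reference to \cite{Sp12}, and the text immediately following it (``So, for $G=\SL_n(q)$, for example, one uses the bijection for $\tG=\GL_n(q)$, then has to check the stabiliser condition and finally prove extendibility'') is an illustration of how the criterion is applied, not a proof. Hence there is no proof in the paper to compare your proposal against.

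That said, your outline is broadly the intended strategy behind Sp\"ath's criterion: pass from $\tilde\Omega$ to a bijection on $\Irr(\tilde G/G)$-orbits, use Clifford theory to identify these with $\tilde G$-orbits on $\Irr_{\ell'}(G)$ and $\tilde M$-orbits on $\Irr_{\ell'}(M)$, then use the stabiliser-splitting and extendibility hypotheses to choose compatible representatives and control the cocycles. Two points where your sketch is thinner than the actual argument in \cite{Sp12} (and the later streamlined treatment in \cite{Sp17b}): first, showing that the orbit sizes match (so that the orbit bijection lifts to a genuine bijection) requires the $\Irr(\tilde G/G)$-compatibility to be used more carefully than you indicate, since one must check that $\chi$ and $\eta$ have the same $\tilde G$-stabiliser; second, the verification of condition~(4) is the heart of the matter and in Sp\"ath's work is phrased via the machinery of ordered character triples and their isomorphisms rather than a bare cocycle comparison, precisely because the decomposition into a ``$\tilde G_\chi/G_\chi$-part'' and a ``$D_\chi$-part'' that you invoke is not automatic without the stabiliser-splitting hypothesis doing real work.
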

 
So, for $G=\SL_n(q)$, for example, one uses the bijection for $\tG=\GL_n(q)$,
then has to check the stabiliser condition and finally prove extendibility.
This leads to the following situation at the time of writing:
$S$ simple group is McKay good for all primes, unless possibly when $S$ is of
type $B_n(q)$, $\tw{(2)}D_n(q)$, $\tw{(2)}E_6(q)$ or $E_7(q)$.

There is one prime for which more can be said: $\ell=2$.

\begin{thm}[Malle--Sp\"ath (2016)]   \label{thm:MS17}
 Let $G$ be quasi-simple of Lie type, not of type $A$, and
 $\chi\in\Irr_{2'}(G)$. Then there exists a linear character $\theta\in\Irr(B)$
 where $B\le G$ is a Borel subgroup, such that $\chi$ is a constituent of
 $\Ind_B^G(\theta)$, unless some cases when $G=\Sp_{2n}(q)$ with
 $q\equiv3\pmod4$.
\end{thm}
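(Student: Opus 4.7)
The plan is to recast the theorem as a statement about principal-series characters, and then combine the Jordan decomposition with a $2$-local analysis of the dual group. For $G$ a non-toral quasi-simple group of Lie type, the Borel $B = UT$ has $U \subseteq [B,B]$, so every linear character of $B$ is the inflation of some $\theta \in \Irr(T)$, and the induced character $\Ind_B^G$ of such an inflation equals the Harish--Chandra induction $\RTG(\theta)$. Hence $\chi$ appears as a constituent of some $\Ind_B^G(\theta)$ with $\theta$ linear if and only if $\chi$ lies in the principal series of $G$, and the task is to prove that every $\chi \in \Irr_{2'}(G)$ lies in the principal series except in the listed symplectic cases.

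The first step is Jordan decomposition. By Theorem~\ref{thm:Jor} and Proposition~\ref{prop:ell'} applied with $\ell = 2$, a character $\chi \in \cE(G,s)$ has odd degree precisely when $|G^*:C_{G^*}(s)|$ is odd and $J_s(\chi) \in \cE(C_{G^*}(s),1)$ has odd degree. By Lusztig's description of Harish--Chandra series, $\chi$ then lies in the principal series of $G$ precisely when (I) $s$ is $G^*$-conjugate to an element of the maximally split torus $T^*$ of $G^*$, and (II) the unipotent character $J_s(\chi)$ lies in the principal series of $C_{G^*}(s)$.

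For (I), the element $s$ centralises some Sylow $2$-subgroup of $G^*$. When $q$ is even the statement is immediate (the Borel has odd index in $G$), so I assume $p$ odd. If $q \equiv 1 \pmod 4$ then $d_2(q) = 1$, the maximally split torus $T^*$ is a Sylow $2$-torus in the sense of Proposition~\ref{prop:ell-local}, the Sylow $2$-subgroups of $G^*$ normalise $T^*$, and the centraliser of such a subgroup sits in $T^*$; hence $s$ is conjugate into $T^*$. If $q \equiv 3 \pmod 4$ the Sylow $2$-tori are built from $\GU_1(q)$-factors, but for types $B_n$, $D_n^{\pm}$, $E_7$ and the exceptional types central involutions inside the Sylow $2$-subgroup still force $s$ into a $G^*$-conjugate of $T^*$. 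The dual group $G^* \cong \SO_{2n+1}(q)$ of $\Sp_{2n}(q)$ behaves differently for $q \equiv 3 \pmod 4$: it contains semisimple involutions whose centraliser is a product of two odd-dimensional orthogonal groups that does not embed into a maximally split torus, producing odd-degree characters outside the principal series and accounting for the listed exceptions.

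For (II), I would use Lusztig's parametrisation of unipotent characters of classical groups by symbols, together with direct inspection for the finitely many exceptional types, to verify that every odd-degree unipotent character of the reductive group $C_{G^*}(s)$ automatically sits in the principal series of $C_{G^*}(s)$; the odd-degree constraint on the unipotent label is sufficiently rigid to pin it to the principal-series family. The main obstacle is step (I): it requires a careful case analysis of Sylow $2$-subgroups and their centralisers in each Lie type, and it is precisely this analysis that isolates the $\Sp_{2n}(q)$ exceptions as the only obstruction.
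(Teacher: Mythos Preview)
Your reformulation --- linear characters of $B=UT$ are inflations from $T$, so the claim becomes ``every odd-degree character lies in the principal series'' --- and the plan to attack this via Jordan decomposition match the paper's indication that the proof ``uses Lusztig's Jordan decomposition and the degree formulas''.

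But your account of the symplectic obstruction is wrong on the group theory. An involution in $\SO_{2n+1}$ must have $(-1)$-eigenspace of \emph{even} dimension $2k$ (otherwise its determinant is $-1$), so its centraliser is $S(O_{2k}\times O_{2n+1-2k})$, never a product of two odd-dimensional orthogonal groups. The genuine source of the exceptions is that for $q\equiv3\pmod4$ the $2k$-dimensional $(-1)$-eigenspace can carry a quadratic form of minus type, giving $C_{\bG^*}(s)^\circ$ an $\SO_{2k}^-(q)$ factor; such an $s$ is \emph{not} conjugate into the split torus of $\SO_{2n+1}(q)$ (whose involutions all have hyperbolic $(-1)$-eigenspace), yet for suitable $k$ it has odd index. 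Your clean (I)/(II) dichotomy also leans on Theorem~\ref{thm:Jor} and Proposition~\ref{prop:ell'}, both stated under the hypothesis that $C_{\bG^*}(s)$ is connected, whereas $\SO_{2n+1}$ is not simply connected and exactly these involutions have disconnected centraliser; the compatibility of Jordan decomposition with Harish-Chandra series then needs the extended version of \cite{Lu88}. Finally, your step-(I) argument for the remaining types at $q\equiv3\pmod4$ (``central involutions inside the Sylow $2$-subgroup still force $s$ into $T^*$'') is too vague to explain why $C_n$ behaves differently from $B_n$, $D_n$, $E_7$; the paper's pointer to ``degree formulas'' suggests that the actual verification proceeds by direct inspection of Lusztig's explicit degree polynomials for unipotent characters (via symbols for classical types, tables for exceptional) rather than through an abstract Sylow analysis.
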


The proof, which is not too hard, uses Lusztig's Jordan decomposition and the
degree formulas.

But now $B=U.T$, with $T\le G$ a maximal torus, and $U\le\ker(\theta)$, so
in fact $\theta\in\Irr(T)$, and
$$\Ind_B^G(\theta)=\Ind_B^G(\Infl_T^B(\theta))=\RTG(\theta).$$
To check the criterion, we need to know the action of $\Aut(G)$ on the
constituents of $\RTG(\theta)$ with $T\le B$. But the decomposition of
$\RTG(\theta)$ is controlled by the Iwahori--Hecke algebra of the relative
Weyl group $W(\theta)=N_G(T,\theta)/T$:
$$\End_{\CC G}(\RTG(\theta))=\cH(W(\theta),q)\cong \CC W(\theta).$$
(As seen in the case of $\GL_n(q)$ above). This does allow us to compute
the action of $\Aut(G)$ on $\Irr_{2'}(G)$; more considerations are needed
on the local side, and extendibility has to be guaranteed.

\begin{thm}[Malle--Sp\"ath, 2016]   \label{thm:McK}
 The McKay Conjecture~\ref{conj:McK} holds for the prime $p=2$.
\end{thm}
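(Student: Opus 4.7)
The plan is to apply the Isaacs--Malle--Navarro reduction (Theorem~\ref{thm:IMN}): it suffices to verify that every finite non-abelian simple group $S$ is McKay good at $\ell=2$. The sporadic and alternating groups are known to be McKay good at every prime; so are Lie-type groups in their defining characteristic, which for $\ell=2$ permits us to assume $q$ odd. For type $A$ the conclusion is known from the Cabanes--Sp\"ath work on $\SL_n(q)$ and $\SU_n(q)$, and Theorem~\ref{thm:CS13} disposes of every remaining type whose outer automorphism group is cyclic. The real task is therefore Sp\"ath's inductive McKay condition (Theorem~\ref{thm:crit}) for the four open families $B_n(q)$, $\tw{(2)}D_n(q)$, $\tw{(2)}E_6(q)$ and $E_7(q)$, together with the exceptional $\Sp_{2n}(q)$ configurations ($q\equiv\pm3\pmod 8$) flagged in Theorems~\ref{thm:MS10general} and~\ref{thm:MS17}.

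For each such $G=\bG^F$ we start from the $\Aut(G)_P$-equivariant global--local bijection $\Omega:\Irr_{2'}(G)\to\Irr_{2'}(N_G(T_d))$ of Theorem~\ref{thm:MS10general} (with $d=d_2(q)\in\{1,2\}$) and lift it to a bijection $\tilde\Omega:\Irr_{2'}(\tG)\to\Irr_{2'}(\tM)$ through a regular embedding $\bG\hookrightarrow\tilde\bG$, compatibly with multiplication by $\Irr(\tG/G)$. By Theorem~\ref{thm:MS17}, every $\chi\in\Irr_{2'}(G)$ appears as a constituent of $\Ind_B^G(\theta)=\RTG(\theta)$ for some linear $\theta\in\Irr(B)$, and the Iwahori--Hecke isomorphism
$$\End_{\CC G}(\RTG(\theta))\cong\cH(W(\theta),q)\cong\CC W(\theta)$$
for the relative Weyl group $W(\theta)=N_G(T,\theta)/T$ identifies the constituents of $\RTG(\theta)$ with $\Irr(W(\theta))$. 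The action of $\Aut(G)$ on $\Irr_{2'}(G)$ can then be read off from its action on pairs $(T,\theta)$ and on the Weyl-group labels.

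Equipped with this description, for each $\tchi\in\Irr_{2'}(\tG)$ we select a constituent $\chi\in\Irr_{2'}(G|\tchi)$ coming from a $D$-stable pair $(T,\theta)$ and verify the two conditions of Theorem~\ref{thm:crit}. The stabiliser identity $(\tG\semidi D)_\chi=\tG_\chi\semidi D_\chi$ follows because the field and graph automorphisms act on the Hecke-algebra parametrisation by explicit Coxeter-theoretic symmetries, so that a $D_\chi$-stable choice of parametrising $W(\theta)$-character is available. Extendibility of $\chi$ to $(G\semidi D)_\chi$ is then obtained by equivariantly extending the Hecke algebra itself, via a Tits deformation argument together with an $H^2$-triviality check, and transporting the extension back to $\chi$. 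The same reasoning applied to $M\supseteq N_G(T_d)$ and $\tM$ settles the local side.

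The main obstacle lies in the non-cyclic $\Out(S)$ in the four open families: for $\chi\in\cE(G,s)$ with $C_{\bG^*}(s)$ disconnected, Lusztig's Jordan decomposition (Theorem~\ref{thm:Jor}) does not apply verbatim, and diagonal, graph and field automorphisms may interact non-trivially on the series, so extendibility can fail a priori. This is circumvented by selecting representatives of each Lusztig series inside Harish-Chandra series of $\RTG(\theta)$ with $\theta$ linear, bringing the Hecke-algebra combinatorics to bear on the whole picture. The residual exceptional symplectic configurations at $q\equiv\pm3\pmod 8$ must then be handled by direct case analysis, after which Sp\"ath's criterion applies uniformly and Theorem~\ref{thm:IMN} completes the proof.
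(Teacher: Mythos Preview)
Your outline is correct and follows essentially the same strategy the paper sketches: reduce via Theorem~\ref{thm:IMN} to the inductive McKay condition, dispose of sporadic, alternating, defining-characteristic and type~$A$ cases by prior work, and for the remaining types exploit Theorem~\ref{thm:MS17} to place every odd-degree character in a principal series $\RTG(\theta)$, then use the Iwahori--Hecke description $\End_{\CC G}(\RTG(\theta))\cong\CC W(\theta)$ to compute the $\Aut(G)$-action and verify Sp\"ath's criterion (Theorem~\ref{thm:crit}) on both the global and local sides. One minor point: the paper (and the underlying Malle--Sp\"ath argument) works primarily at the $\tilde G$ level and descends, rather than lifting from $G$ as you phrase it, and the symplectic exception in Theorem~\ref{thm:MS17} is $q\equiv3\pmod4$ rather than $q\equiv\pm3\pmod8$; but these do not affect the substance of your approach.
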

 
In order to generalise this to other primes, one needs to work with more
general Levi subgroups: Let $\bP\le\bG$ be an $F$-stable parabolic subgroup,
and $\bL\le\bP$ an $F$-stable Levi complement, with finite groups of fixed
points $L=\bL^F\le P=\bP^F\le G$. Then there is the functor of
Harish-Chandra induction
$$\RLG:\CC L\text{-mod}\rightarrow\CC G\text{-mod},\qquad
  M\mapsto\Ind_P^G\Infl_L^P(M).$$
The special case $L=T\le P=B$ was considered above.

Now $\la\in\Irr(L)$ is called \emph{cuspidal} if it does not occur as
constituent of $\RML(\mu)$ for any proper Levi subgroup $M<L$, $\mu\in\Irr(M)$.
Again the decomposition of $\RLG(\la)$, with $\la$ cuspidal, is controlled
by the Iwahori--Hecke algebra of a relative Weyl group (Howlett--Lehrer
\cite{HL80}). To apply the above argument, one needs to understand the action
of automorphisms on cuspidal characters.

\begin{thm}[Malle (2017)]   \label{thm:cusp}
 Let $G$ be quasi-simple of Lie type. Then the action of $\Aut(G)$ on the
 cuspidal characters of $G$ lying in quasi-isolated series is known.
\end{thm}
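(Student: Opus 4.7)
The plan is to combine Lusztig's (generalised) Jordan decomposition with the classification of quasi-isolated semisimple classes of $\bG^*$. Since Harish-Chandra induction is compatible with Jordan decomposition up to a central twist, a cuspidal character $\chi\in\cE(G,s)$ corresponds bijectively to a cuspidal unipotent character of the centraliser $C_{G^*}(s)^F$. It therefore suffices to determine, for each quasi-isolated class $[s]\subset G^*$: (a) how $\Aut(G)$ permutes the conjugacy class $[s]$ via the induced dual action on $\bG^*$, and (b) how the stabiliser of $[s]$ acts on the (usually very few) cuspidal unipotent characters of $C_{G^*}(s)^F$.

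First I would invoke the classification of quasi-isolated semisimple classes in simple algebraic groups, which for exceptional types $G_2, F_4, E_6, E_7, E_8$ is given by a short explicit list due to Bonnaf\'e, and for classical types reduces essentially to involutions. For each centraliser $C_{\bG^*}(s)$ arising in this list, the cuspidal unipotent characters of $C_{G^*}(s)^F$ are known by Lusztig, and typically there are only one or two. Field and graph automorphisms act on pairs $(s,\eta)$ by acting on $s$ through the dual automorphism of $\bG^*$ and on the unipotent cuspidal character $\eta$ through the induced automorphism of $C_{\bG^*}(s)$; apart from the short list of known exceptions (types $D_{2n}$, $B_2$ and $F_4$ in characteristic $2$, and $G_2$ in characteristic $3$, mentioned in the example preceding this theorem), unipotent cuspidal characters are fixed by all outer automorphisms.

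For inner-diagonal automorphisms the analysis is more delicate: they act on $\cE(G,s)$ by scalars governed by a Lusztig-type pairing with the component group $A(s)=C_{\bG^*}(s)/C_{\bG^*}(s)^\circ$. Once a Jordan decomposition is fixed, this pairing is explicit, and one can read off the permutation action on each cuspidal series separately. Combining the resulting pieces case by case, using the tables of cuspidal unipotent characters for the finite groups $C_{G^*}(s)^F$ obtained from Bonnaf\'e's list, then yields the full $\Aut(G)$-action on the cuspidal characters in each quasi-isolated series.

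The main obstacle will be the non-canonicity of Jordan decomposition when $C_{\bG^*}(s)$ is disconnected, which is precisely the quasi-isolated non-central situation of interest. Two admissible Jordan decompositions differ by a linear character of $A(s)$, so to pin down the $\Aut(G)$-action one must make a choice compatibly with each outer automorphism, and this reduces to the vanishing of certain cohomological obstructions on $A(s)$. I would handle this by working inside the Iwahori--Hecke algebra $\End_{\CC G}(\RLG(\la))$ of the relative Weyl group of a cuspidal pair $(\bL,\la)$, as in Howlett--Lehrer \cite{HL80}, on which $\Aut(G)$ acts tractably via its action on $(\bL,\la)$; this transports the compatibility question to an explicit computation inside a finite Coxeter group, which can then be carried out type by type through the short list of quasi-isolated cases.
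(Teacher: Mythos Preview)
The paper does not actually prove this theorem: it is a survey, and Theorem~\ref{thm:cusp} is stated as a result from Malle (2017) with no proof or proof sketch given in the text. There is therefore no proof in the paper against which your proposal can be compared.

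That said, a brief comment on your outline. The overall architecture --- pass to the dual side via Jordan decomposition, use Bonnaf\'e's finite list of quasi-isolated classes, and reduce to the action on cuspidal unipotent characters of $C_{G^*}(s)^F$ --- is the natural one and is broadly in line with how such results are obtained. Two points deserve more care. First, your assertion that cuspidality is preserved under Jordan decomposition (``a cuspidal $\chi\in\cE(G,s)$ corresponds to a cuspidal unipotent character of $C_{G^*}(s)^F$'') is not automatic when $C_{\bG^*}(s)$ is not a Levi subgroup of $\bG^*$, which is exactly the quasi-isolated situation; the compatibility of Jordan decomposition with Harish-Chandra series requires justification here and is part of the actual work. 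Second, the non-uniqueness of Jordan decomposition for disconnected centralisers, which you correctly flag, is the heart of the matter: your proposed remedy via the Howlett--Lehrer endomorphism algebra is reasonable in spirit, but pinning down the $\Aut(G)$-action on individual constituents (rather than on the induced module as a whole) still requires explicit character-value computations in the quasi-isolated series, and this case-by-case analysis is where the substance of the original proof lies.
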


This does, however, not yet solve the extension problem, and moreover the local
situation also needs to be studied.

\section{Blocks  and  characters of  finite  simple groups.}
As seen in the discussion around the McKay conjecture,  in order to make a
success of the reduction strategy  for the local-global conjectures one needs
very detailed knowledge both of the character theory of finite simple groups
as well as of their $p$-local structure. For the block-wise versions of these
conjectures we require  this information at a yet finer level.
A first step would be to obtain workable descriptions of block partitions of
irreducible characters and the corresponding defect groups. The block
distribution problem for finite (quasi and almost) simple groups falls
naturally into four cases: 
\begin{itemize}
\item  sporadic groups
\item  alternating groups
\item  finite groups of Lie type in describing characteristic
\item  finite groups of Lie type in non-describing characteristic
\end{itemize}
Of these the most difficult case is the last. We will discuss this case at
some length. Block distributions in sporadic groups can be worked out through
the  ATLAS character tables. The third  case, namely the blocks of  finite
groups of Lie type in defining characteristic is in some sense the easiest as
there are very few blocks (see Example~\ref{exa:defining}). In
Example~\ref{exa:Sn-local} we give a flavour of the first case by describing
the block distribution for finite symmetric groups. 

\subsection{Local Block Theory}   \label{sec:locblock}
In order to get started we need to recall some foundational results from local
block theory. As in Section~\ref{sec:lg} let $\cO\ge\ZZ_p$ be a large enough
extension. Let $k:=\cO/\fP$ be the residue field of $\cO$ and
$\bar{\ }:\cO\rightarrow k$, $\alpha\mapsto\bar\alpha:=\alpha+\fP$, the
natural epimorphism. The block decomposition
$$\cO G= B_1\oplus\ldots\oplus B_r $$
induces the unique decomposition
$$kG = \bar B_1\oplus\ldots\oplus \bar B_r$$
into a direct sum of minimal two-sided ideals of $kG$ where for any element
$a =\sum_{g \in G} \alpha_g g$ of $\cO G$ we denote $\bar a:=
\sum_{g \in G} \bar \alpha_g g$. These decompositions correspond to unique
decompositions  
$$1_{\cO G} = e_{B_1} + \ldots + e_{B_r},$$ 
$$1_{kG} = e_{ \bar B_1} + \ldots + e_{\bar B_r},$$
of $1_{\cO G}$ and of $1_{kG} $ into a sum of central primitive idempotents,
called \emph{block  idempotents} of $kG$. The expression for $ e_{\bar B}$ is
obtained by reducing coefficients modulo $p$. Thus we have  bijections
$B_i\leftrightarrow\bar B_i\leftrightarrow e_{\bar B_i}$ between the sets of
blocks of $\cO G$, blocks of  $kG$  and block idempotents of $kG$. By a defect
group of
$\bar B_i$, $e_{B_i}$ or $e_{\bar B_i}$ we mean a defect group of $B_i$.
Similarly  we may denote $\Irr(B_i)$ by $\Irr(\bar B_i)$, $\Irr(e_{B_i})$
or $\Irr(e_{\bar B_i})$.

Block idempotents can be read off the character table of $G$.
If $B$ is a block of $G$, then
$$e_B=\sum_{\chi\in\Irr(B)}\frac{\chi(1)}{|G|}\sum_{x\in G_{p'} }\chi(x)x^{-1},$$  
is the sum of central idempotents of $KG$ corresponding to the elements of
$\Irr(B)$.  

\begin{exa}   \label{exa:pconstrained}
For $G$ a finite group, $ O^p(G)$ denotes the smallest normal subgroup of $G$
with quotient a $p$-group and $O_p(G)$ denotes the largest normal $p$-subgroup
of $G$.
\par
(a) For any block $B$ of $G$, $e_B\in\cO O^p(G)$.
\par
(b) For any block $B$ of $G$ and any normal subgroup $N$ of $G$,
$e_B\in\cO C_G(N)$. In particular, if $C_G(O_p(G))\leq O_p(G)$, then the
principal block is the unique block of $G$. 
\par
(c) If $G =  G_1 \times G_2$ is a direct product then the block idempotents
of $\cO G$ are of the form $e_1e_2$ where $e_i$ is a block idempotent of
$\cO G_i$, $i=1,2$.
\end{exa}

Let $Q$ be a $p$-subgroup of $G$. For an element $a =\sum_{g\in G}\alpha_g g$
of $kG$ set
$$\Br_Q(a) = \sum_{g\in C_G(Q)}\alpha_g g\in kC_G(Q).$$
The Brauer map
$$\Br_Q : kG \to  kC_G(Q), \qquad  a \mapsto \Br_Q(a),$$ 
restricts to a multiplicative map on $Z(kG)$.

\begin{thm} [Brauer's first main theorem]   \label{thm:Br1}
 Let $D$ be a $p$-subgroup of $G$. The map 
 $$e \mapsto  \Br_D( e ) $$
 induces a bijection between block idempotents of $kG$ with defect group $D$
 and block idempotents  of $kN_G(D)$ with defect group $D$. If  $B$ is a
 $p$-block of $G$ with defect group $D$, then $ \Br_D ( e_{\bar B} )$ is the
 block idempotent of the Brauer correspondent of $B$ in $ kN_G(D)$.
\end{thm}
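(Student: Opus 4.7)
The strategy is to set up $\Br_D$ as an algebra homomorphism on a subalgebra containing $Z(kG)$, and then combine the classical defect-group detection principle with the orthogonality of block idempotents to extract the bijection.

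First I would verify that $\Br_D$ restricts to a surjective $k$-algebra homomorphism $(kG)^D \to kC_G(D)$, where $(kG)^D$ denotes the $D$-fixed subalgebra under conjugation. A natural $k$-basis of $(kG)^D$ is furnished by $D$-orbit sums on $G$; orbits of size divisible by $p$ lie in the kernel, while singleton orbits are precisely the elements of $C_G(D)$. Multiplicativity then reduces to a straightforward orbit-counting argument. Since $Z(kG) \subseteq (kG)^G \subseteq (kG)^D$, this restricts to an algebra map $Z(kG) \to kC_G(D)$. Its image is $N_G(D)$-invariant, because $\Br_D$ is equivariant under $N_G(D)$-conjugation, and any $N_G(D)$-invariant element of $kC_G(D)$ already lies in $Z(kN_G(D))$: the relation $nan^{-1}=a$ gives $na=an$ for every $n\in N_G(D)$. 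Thus $\Br_D$ sends idempotents of $Z(kG)$ to idempotents of $Z(kN_G(D))$, so every block idempotent of $kG$ maps to a sum of block idempotents of $kN_G(D)$ or to $0$.

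The crucial input is the Brauer-type characterisation: for a block $B$ of $kG$ with defect group $D_B$, one has $\Br_Q(e_B)\neq 0$ if and only if $Q$ is contained (up to $G$-conjugacy) in $D_B$. Combined with the parallel statement inside $N_G(D)$---where $D \trianglelefteq N_G(D)$ forces every block of $kN_G(D)$ with defect group exactly $D$ to be fixed by $\Br_D$---this yields, for $B$ with defect group $D$ and $\Br_D(e_B) = \sum_i e_{b_i}$, that each summand $b_i$ has a defect group both containing a conjugate of $D$ and sitting inside some defect group of $B$, hence of the same order as $D$. A $p$-permutation module argument, equivalently Green correspondence applied to the indecomposable summands of $kGe_B$ with vertex $D$, refines this to show that exactly one summand appears, defining a block $b$ of $kN_G(D)$ with defect group $D$. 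Injectivity of $e_B \mapsto \Br_D(e_B)$ on blocks of defect $D$ then follows from multiplicativity of $\Br_D$ and orthogonality $e_Be_{B'}=0$ for $B\neq B'$; surjectivity follows by applying $\Br_D$ to $1_{kG} = \sum_B e_B$ and noting that every block idempotent of $kN_G(D)$ of defect $D$ occurs in the resulting decomposition.

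The main obstacle is the refinement step---showing that $\Br_D(e_B)$ is a single block idempotent of defect exactly $D$, and not a strictly larger sum. The cleanest route is through the theory of Brauer pairs: one realises $(D, \Br_D(e_B))$ as a maximal $B$-Brauer pair inside $N_G(D)$, and invokes the transitivity of the $G$-action on maximal Brauer pairs attached to a fixed block to exclude further summands. Finally, identification of the resulting $b$ with the Brauer correspondent characterised by the class-function congruence stated earlier in the excerpt would proceed by expanding
$$e_B = \sum_{\chi\in\Irr(B)} \tfrac{\chi(1)}{|G|} \sum_{x\in G_{p'}} \chi(x)\,x^{-1},$$
applying $\Br_D$ coefficient-by-coefficient, and matching the result against the analogous expansion of $e_b$ on the $N_G(D)$-side.
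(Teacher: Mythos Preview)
The paper does not prove this statement at all: Theorem~\ref{thm:Br1} is quoted as a classical result in this survey and the text immediately moves on to Brauer pairs. There is therefore no ``paper's own proof'' to compare your proposal against.

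That said, your outline largely follows the standard textbook route and the first two paragraphs are sound. The point that deserves care is your refinement step. You propose to realise $(D,\Br_D(e_B))$ as a maximal $B$-Brauer pair and then invoke transitivity of $G$ on maximal $B$-Brauer pairs (the Alperin--Brou\'e theorem, stated later in the paper as Theorem~\ref{thm:AlpBro}). Two issues arise. First, a Brauer pair requires the second component to be a \emph{block} idempotent of $kC_G(D)$, and at this stage you only know that $\Br_D(e_B)$ is a central idempotent, possibly a sum of several; so the object $(D,\Br_D(e_B))$ is not yet known to be a Brauer pair. Second, in the paper's logical order Theorem~\ref{thm:AlpBro} comes after Theorem~\ref{thm:Br1}, and in most developments the identification of maximal Brauer pairs with defect groups uses the first main theorem or equivalent input, so appealing to it here risks circularity.

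The classical non-circular argument avoids Brauer pairs altogether: one characterises the defect group $D$ of $B$ by $e_B\in\mathrm{Tr}_D^G\big((kG)^D\big)$ but $e_B\notin\mathrm{Tr}_Q^G\big((kG)^Q\big)$ for $Q<D$, and then uses the compatibility of $\Br_D$ with relative trace (via a Mackey-type decomposition) to transport this to $N_G(D)$. This yields directly that $\Br_D(e_B)$ is primitive in $Z(kN_G(D))$ with defect group $D$, and the bijectivity then follows from your orthogonality and $1=\sum_B e_B$ arguments as you wrote them.
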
 

A \emph{$G$-Brauer pair} (also known as \emph{subpair}) is a pair $(Q,e)$
where $ Q\leq G$ is a $p$-subgroup of $G$ and $e$ is a block idempotent of
$kC_G(Q)$. We denote by $\cP(G)$ the set of $G$-Brauer pairs and for a block
$B$ of $G$ we denote by $\cP(B)$ the subset of $\cP(G)$ consisting of Brauer
pairs $(Q,e)$ such that $\Br_Q( e_{\bar B})e\ne 0$, the elements of $\cP(B)$
are called \emph{$B$-Brauer pairs}. It is easily seen that there is a partition
$$\cP(G)={\mathcal  P}(B_1)\sqcup\ldots\sqcup\cP(B_r)$$
where $ B_1, \ldots, B_r $ are  the blocks of $G$.

We would like to relate the block decomposition of $\Irr(G)$ with the block
decomposition of $\cP(G)$. Brauer's second main theorem gives us a way of doing
this. For $x$ a $p$-element of $G$ and  $\chi\in \Irr(G)$, we let
$d^x\chi: C_G(x)\to\cO$ be the function defined by
$$d^x\chi (y)  =  \begin{cases}   \chi (xy)& \text{if   $y \in G_{p'} $, }\\
                            0& \text{if  $y\notin G_{p'}$.}\end{cases}$$

\begin{thm}[Brauer's second main theorem]   \label{thm:Br2}
 Let $ B$ be a  block   of $ G$,  $x \in G $  a $p$-element and $C$ a block
 of $C_G(x)$. Suppose that there exists $\chi \in\Irr(B) $, $ \psi \in\Irr(C)$
 such that
 $$\langle d^x\chi,\psi\rangle:= \sum_{y\in C_G(s)_{p'} }\chi(xy)\psi(y)\ne0.$$
 
 Then $(\langle x\rangle,e_C)\in\cP(B)$. In particular, if $d^x\chi\ne 0$,
 then $\cP(B)$ contains an element of the form $(\langle x\rangle,e)$.
\end{thm}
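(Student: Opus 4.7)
The plan is to translate the desired conclusion into central-character language, and then recognise the sum $\langle d^x\chi,\psi\rangle$ inside the resulting formula using Brauer's section decomposition of the block idempotent.

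First I would reformulate: by definition of $\cP(B)$, the condition $(\langle x\rangle, e_C)\in\cP(B)$ is equivalent to the non-vanishing of $\Br_{\langle x\rangle}(\bar e_B)\cdot e_{\bar C}$ in $Z(kC_G(x))$. Since both factors are central idempotents (the first because $\Br_{\langle x\rangle}$ is multiplicative on $Z(kG)$), this is in turn equivalent to $\omega_C\bigl(\Br_{\langle x\rangle}(\bar e_B)\bigr)\ne 0$ in $k$, where $\omega_C$ denotes the central character of the block $C$. Fixing a representative $\psi\in\Irr(C)$, $\omega_C$ is the reduction modulo $\fP$ of the ordinary central character $\omega_\psi\colon Z(\cO C_G(x))\to\cO$ given by $\omega_\psi(z)=\psi(z)/\psi(1)$, so it suffices to compute $\omega_\psi$ on an $\cO$-lift of $\Br_{\langle x\rangle}(\bar e_B)$ and show the result lies outside $\fP$.

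The decisive input is Brauer's decomposition of the block idempotent into $p$-section components
\[
e_B=\sum_{[s]}e_B^{(s)},
\]
where $[s]$ runs over $G$-conjugacy classes of $p$-elements and $e_B^{(s)}\in\cO G$ is supported on the $p$-section of $s$. Since $\Br_{\langle x\rangle}$ vanishes off $C_G(x)$, only the $x$-component contributes: $\Br_{\langle x\rangle}(\bar e_B)=\Br_{\langle x\rangle}(\bar e_B^{(x)})$. The explicit formula for $e_B^{(x)}$ features the values $\chi(xy)$ with $y\in C_G(x)_{p'}$, and substituting it into $\omega_\psi(z)=\psi(z)/\psi(1)$ yields, after rearrangement,
\[
\omega_\psi\bigl(\Br_{\langle x\rangle}(\bar e_B)\bigr)\equiv u\cdot\sum_{\chi'\in\Irr(B)}\chi'(1)\,\langle d^x\chi',\psi\rangle\pmod{\fP},
\]
where $u\in\cO$, essentially $|G|_p/(|C_G(x)|\psi(1))$, is a $p$-adic unit.

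To bridge the gap between non-vanishing of a single $\langle d^x\chi,\psi\rangle$ and non-vanishing of the whole sum over $\Irr(B)$, I would run the same calculation block-wise. Applied to every block $B'$ of $G$ and to each $\phi\in\IBr(C)$, the resulting identities, together with the orthogonality of columns of the generalised decomposition matrix indexed by distinct blocks of $G$, show that $(\langle x\rangle,e_C)\notin\cP(B)$ would force $d^x_{\chi,\phi}=0$ for every $\chi\in\Irr(B)$ and every $\phi\in\IBr(C)$. In view of the expansion $\chi(xy)=\sum_\phi d^x_{\chi,\phi}\phi(y)$ on $C_G(x)_{p'}$ and the block orthogonality of Brauer characters of $C_G(x)$, this forces $\langle d^x\chi,\psi\rangle=0$ for every $\chi\in\Irr(B)$ and $\psi\in\Irr(C)$, contradicting the hypothesis. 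The ``in particular'' statement then follows by expanding $d^x\chi$ in the basis of irreducible characters of $C_G(x)$ and applying what has been proved to each constituent.

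The main obstacle is the section decomposition $e_B=\sum_{[s]}e_B^{(s)}$ together with the tracking of $p$-adic integrality that guarantees $u\notin\fP$; once this is in place, the remainder is routine bookkeeping with central characters and standard orthogonalities.
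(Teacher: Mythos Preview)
The paper does not supply its own proof of this statement: Theorem~\ref{thm:Br2} is quoted as Brauer's classical second main theorem, one of several foundational results from local block theory assembled in Section~\ref{sec:locblock} for later use. So there is no argument in the paper against which to compare your proposal.

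Your sketch, however, contains a genuine error at the pivotal step. You decompose $e_B=\sum_{[s]}e_B^{(s)}$ with $e_B^{(s)}$ supported on the $p$-section of $s$, and then claim that $\Br_{\langle x\rangle}(\bar e_B)=\Br_{\langle x\rangle}(\bar e_B^{(x)})$ because ``$\Br_{\langle x\rangle}$ vanishes off $C_G(x)$''. Both halves of this are wrong. First, the Brauer map $\Br_{\langle x\rangle}$ projects onto $kC_G(x)$, and $C_G(x)$ meets many $p$-sections of $G$ (it contains $1$, $x$, $x^2$, and all $p'$-elements commuting with $x$), so there is no reason only the $x$-section component should survive. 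Second, and fatally, by Osima's theorem the block idempotent $e_B$ is supported entirely on $p$-regular elements; this is precisely the formula for $e_B$ displayed in the paper just before Example~\ref{exa:pconstrained}. Hence $e_B^{(s)}=0$ for every nontrivial $p$-element $s$, and your section decomposition is trivially $e_B=e_B^{(1)}$. Consequently $\Br_{\langle x\rangle}(\bar e_B)$ involves only the values $\chi(y^{-1})$ for $y\in C_G(x)_{p'}$, never $\chi(xy)$, and the displayed congruence featuring $\sum_{\chi'}\chi'(1)\,\langle d^x\chi',\psi\rangle$ cannot be obtained in this way.

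Standard character-theoretic proofs of Brauer's second main theorem do use $p$-sections, but in a different place: one works with the class function $\chi^{(x)}$ on $G$ (equal to $\chi$ on the $p$-section of $x$ and zero elsewhere) rather than with a putative section piece of $e_B$, and the link to $\Br_{\langle x\rangle}(\bar e_B)e_{\bar C}$ is made via the block-induction map $C\mapsto C^G$ and its description through central characters. The later orthogonality bookkeeping you outline is in the right spirit, but it rests on the incorrect formula, so the argument as written does not go through.
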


The assignment $\chi\mapsto d^x\chi$ extends by linearity to a map $d^x$
from the set of $\cO$-valued class functions on $G$ to the set of $\cO$-valued
class functions on $C_G(x)$. The map $d^x$ is called the \emph{generalised
decomposition map} with respect to $x$. When we want to emphasise  the 
underlying group $G$, the generalised decomposition map is denoted $d^{x,G}$.

The set $\cP(B)$ has a nice description when $B$ is the principal block.

\begin{thm}[Brauer's third main theorem]   \label{thm:Br3}
 Let $B_0$ be the principal block of $G$ and let $(Q,e)\in\cP(G)$. Then
 $(Q,e)\in\cP(B_0)$ if and only if $e$ is the idempotent of the principal
 block of $C_G(Q)$. 
\end{thm}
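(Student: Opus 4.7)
The plan is to establish the equivalent algebraic identity $\Br_Q(e_{\bar B_0(G)}) = e_{\bar B_0(C_G(Q))}$ in $Z(kC_G(Q))$; the stated biconditional then follows by orthogonality of distinct primitive central idempotents.

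The central tool is an \emph{augmentation identity}: for the augmentation map $\varepsilon_H \colon kH \to k$, $h \mapsto 1$, of any finite group $H$, one has
\[
\varepsilon_G(a) \;=\; \varepsilon_{C_G(Q)}\bigl(\Br_Q(a)\bigr) \qquad \text{for every } a \in (kG)^Q.
\]
Indeed, writing $a = \sum_g \alpha_g g$, the $Q$-invariance of $a$ forces $\alpha_g$ to be constant on the orbits of $Q$ acting on $G$ by conjugation; a $Q$-orbit has size one precisely when its unique element lies in $C_G(Q)$, while every larger orbit has cardinality a positive power of $p$ and hence contributes $0$ to $\sum_g \alpha_g$ in $k$. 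Combined with the standard fact that a primitive central idempotent $e$ of $kH$ satisfies $\varepsilon_H(e) \in \{0,1\}$, with equality $\varepsilon_H(e)=1$ characterising $e = e_{\bar B_0(H)}$ (since the trivial $kH$-module factors through the principal block only), this immediately yields one direction: applying the identity to $e_{\bar B_0(G)} \in Z(kG) \subseteq (kG)^Q$ gives $\varepsilon_{C_G(Q)}(\Br_Q(e_{\bar B_0(G)})) = 1$; decomposing $\Br_Q(e_{\bar B_0(G)}) = \sum_i e_i$ into distinct primitive central idempotents of $kC_G(Q)$ and applying $\varepsilon_{C_G(Q)}$ forces exactly one summand $e_i$ to equal $e_{\bar B_0(C_G(Q))}$. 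So $(Q, e_{\bar B_0(C_G(Q))}) \in \cP(B_0(G))$.

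For the converse one must rule out the appearance of any non-principal block idempotent of $kC_G(Q)$ in this decomposition. The cleanest route is to invoke the standard Brauer-pair-theoretic result that the set of block idempotents $e$ of $kC_G(Q)$ with $(Q,e) \in \cP(B)$ forms a single $N_G(Q)$-orbit under conjugation, a consequence of Theorem~\ref{thm:Br1} applied along chains of Brauer pairs. Since the principal block idempotent $e_{\bar B_0(C_G(Q))}$ is fixed by every automorphism of $C_G(Q)$ (being characterised by containing the trivial character), its $N_G(Q)$-orbit is a singleton, forcing $(Q, e) \in \cP(B_0(G))$ to imply $e = e_{\bar B_0(C_G(Q))}$.

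The heart of the argument is the augmentation identity, which encodes elegantly why the Brauer map preserves the principal block in characteristic $p$. The main obstacle lies in the converse: the $N_G(Q)$-transitivity of Brauer pair configurations goes slightly beyond the machinery developed explicitly in this survey. As an alternative, one can establish the algebraic identity by induction on $|Q|$, using the factorisation $\Br_Q^G = \Br_Q^{C_G(R)} \circ \Br_R^G$ for a central subgroup $R \trianglelefteq Q$ of order~$p$: this reduces to a strictly smaller $p$-subgroup inside the typically smaller centraliser $C_G(R)$, with the anomalous case $R \le Z(G)$ (where $C_G(R) = G$) handled separately by passing to the quotient $G/R$.
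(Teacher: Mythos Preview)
The paper does not prove this theorem; it is recorded as a classical result of Brauer, so there is nothing in the text to compare your argument against and the proposal must be assessed on its own terms.

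Your augmentation argument for the forward direction---that $e_{\bar B_0(C_G(Q))}$ occurs as a summand of $\Br_Q(e_{\bar B_0(G)})$---is correct and is the standard opening move.

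The converse is where the substance lies, and your primary route contains a genuine gap. You invoke as a ``standard Brauer-pair-theoretic result'' that for \emph{any} block $B$ the idempotents $e$ with $(Q,e)\in\cP(B)$ form a single $N_G(Q)$-orbit, citing Theorem~\ref{thm:Br1}. But Theorem~\ref{thm:AlpBro} gives $G$-transitivity only on \emph{maximal} $B$-Brauer pairs, and Brauer's first main theorem addresses only blocks whose defect group is exactly $Q$; neither yields the claimed transitivity for an intermediate $Q$. In fact this transitivity fails for general blocks: the $N_G(Q)$-orbits on $\{e:(Q,e)\in\cP(B)\}$ correspond to blocks of $N_G(Q)$ lying over $B$ under block induction, and there may be several of these. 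The transitivity \emph{does} hold for the principal block---but that is precisely the statement being proved, so appealing to it here is circular.

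Your alternative inductive argument is the textbook approach and is sound in outline, but the induction is not on $|Q|$ as you write. Factoring $\Br_Q^G=\Br_Q^{C_G(R)}\circ\Br_R^G$ for $R\le Z(Q)$ of order~$p$ reduces to the pair $(C_G(R),Q)$ with $Q$ of unchanged order; what actually shrinks is $|G|$ (when $R\not\le Z(G)$, so that $C_G(R)\lneq G$), or again $|G|$ after passing to $G/R$ (when $R\le Z(G)$, using that central idempotents of $kC_G(Q)$ are determined modulo the nilpotent kernel of $kC_G(Q)\to k[C_G(Q)/R]$). With the induction set up on $|G|$ the argument goes through.
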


The set $\cP(G)$ is a $G$-set  via 
$$^x(Q,e) = (\,^x Q,\,^ x e),\qquad\text{for all $x\in G$, $(Q,e)\in\cP(G)$}$$
where $^x a := xax^{-1}$ for $x\in G$, $a\in kG$. The subset $\cP(B)$ is
$G$-invariant for $B$ a block of $G$. In \cite{AB79} Alperin and Brou\'e
endowed $\cP(G)$ and  $\cP(B)$   with  a  $G$-poset  structure.   Let $(Q, e), (R, f) \in\cP(G)$.     We say that    $(Q,e) $ is \emph{normal in $(R,f)$}  and write   $ (Q, e) \unlhd  (R, f) $   if  $Q \unlhd R $,  $ \, ^x(Q, e) =  (Q,e)  $ for all $x \in R$ and $ \Br_R(e) f \ne 0 $.  We say that  $ (Q, e) \leq    (R, f) $   if there exists a chain  of   normal inclusions 
$$  (Q, e)=: (Q_0, e_0 ) \unlhd  \ldots  \unlhd (Q_n, e_n) := (R, f) $$ 
in ${\cP }(G) $ starting at $(Q, e)$ and ending at $ (R, f) $.

\begin{thm} [Alperin--Brou\'e (1979)]   \label{thm:AlpBro}
 $(\cP(G),\leq)$ is a $G$-poset. For any $(R,f)\in\cP(G)$  and any $Q \leq R $,
 there exists a unique block $e$ of $kC_G(Q)$ such that  $(Q,e)\leq(R, f)$.
 The sets $\cP(B)$ as $B$ runs over the blocks of $G$ are the connected
 components of $(\cP (G),\leq)$. For a block $B$ of $G$, 
 \begin{enumerate}  [\rm(a)]
  \item $\cP(B)$ is $G$-invariant and $(1, e_{\bar B})$ is the unique minimal
   element  of ${\cP(B)} $.
  \item $G$ acts transitively on the set of maximal elements  of  $\cP(B) $
   and an element  $(D,d) $ of  $\cP(B) $   is maximal  if and only if $D$ is a
   defect group of $B$.
 \end{enumerate}
\end{thm}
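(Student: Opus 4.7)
The plan is to reduce everything to a single technical lemma asserting that for any $p$-subgroups $Q \leq R$ of $G$ and any block idempotent $f$ of $kC_G(R)$, there is a \emph{unique} block idempotent $e$ of $kC_G(Q)$ with $(Q,e) \leq (R,f)$. Granted this, all four assertions of the theorem follow. That $(\cP(G), \leq)$ is a $G$-poset is routine: reflexivity, transitivity and $G$-equivariance of $\leq$ are immediate from the definitions, while antisymmetry holds because any nontrivial normal inclusion strictly enlarges the first coordinate.

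For the key lemma, I would first handle a single normal inclusion $Q \unlhd R$. The map $\Br_R$ restricts to an algebra homomorphism $(kC_G(Q))^R \to kC_G(R)$. If $e$ is a block idempotent of $kC_G(Q)$ and $h \in R$, then $\Br_R({}^h e) = \Br_R(e)$ (the coefficient of $g \in C_G(R)$ in ${}^h e$ equals that of $h^{-1}gh = g$ in $e$), so the orbit sum of any non-singleton $R$-orbit of blocks maps to $[R{:}R_e]\,\Br_R(e) = 0$ in characteristic $p$. Consequently
$$1 = \Br_R(1) = \sum_{\text{$R$-stable $e$}} \Br_R(e)$$
is a decomposition of $1 \in kC_G(R)$ into pairwise orthogonal central idempotents (orthogonality from $\Br_R(e)\Br_R(e') = \Br_R(ee') = 0$). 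Since $f$ is primitive in $Z(kC_G(R))$, exactly one $R$-stable $e$ satisfies $\Br_R(e) f \ne 0$. For general $Q \leq R$, every subgroup of a $p$-group is subnormal, so existence of $e$ with $(Q, e) \leq (R, f)$ follows by iterating the normal case along any subnormal chain. The main obstacle -- and the genuine technical heart of the Alperin--Brou\'e argument -- is showing uniqueness, i.e.\ that the resulting $e$ is independent of the chosen chain; I would prove this by induction on $|R{:}Q|$, passing between two chains via a common refinement obtained from normal closures of intermediate terms and applying the normal-inclusion case at each refining step.

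With the lemma in hand, part (a) follows by applying it with $Q = 1$: since $\Br_1$ is the identity on $kG$ and $e_{\bar B}$ is primitive in $Z(kG)$, the unique element of $\cP(B)$ lying below any $(Q, e) \in \cP(B)$ at level $1$ must be $(1, e_{\bar B})$. For the connected-component claim, suppose $(Q, e) \in \cP(B)$ and $(Q, e) \leq (R, f)$. Using $\Br_R \circ \Br_Q = \Br_R$ on $Z(kG)$ (both sides pick out the support on $C_G(R) \subseteq C_G(Q)$), together with the decomposition $\Br_Q(e_{\bar B}) = \sum_{(Q,e') \in \cP(B)} e'$ into $kC_G(Q)$-blocks and the orthogonality of these $e'$, one computes
$$\Br_R(e_{\bar B}) f = \Br_R(\Br_Q(e_{\bar B})) f = \Br_R(e) f = f \ne 0,$$
so $(R,f) \in \cP(B)$. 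Combined with part (a) and the partition $\cP(G) = \sqcup_B \cP(B)$, each $\cP(B)$ is exactly one connected component of $(\cP(G), \leq)$.

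For (b), the standard characterisation of the defect group as a $p$-subgroup $D$ maximal among those with $\Br_D(e_{\bar B}) \ne 0$ shows that any maximal element $(R,f)$ of $\cP(B)$ has $R$ a defect group: any proper normal overgroup $R \unlhd R'$ would yield $\Br_{R'}(e_{\bar B}) = \Br_{R'}(\Br_R(e_{\bar B})) \ne 0$ (using that $\Br_R(e_{\bar B})f = f$), enabling an extension of $(R,f)$. Conversely, for $D$ a defect group and $d$ any block of $kC_G(D)$ with $\Br_D(e_{\bar B}) d \ne 0$, no such extension exists, so $(D, d)$ is maximal. Transitivity of $G$ on maximal elements of $\cP(B)$ then follows by combining the $G$-conjugacy of defect groups with Brauer's first main theorem (Theorem~\ref{thm:Br1}), which provides a canonical bijection between $B$-Brauer pairs of the form $(D, \cdot)$ and block idempotents of $kN_G(D)$ with defect group $D$, hence pins down $d$ up to $N_G(D)$-conjugacy.
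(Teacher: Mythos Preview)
The paper does not prove this theorem. It is quoted as a result of Alperin and Brou\'e \cite{AB79} and then used as background for the discussion of $\ell$-blocks of finite reductive groups; no argument is given. So there is nothing in the paper to compare your proposal against.

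Your outline is nonetheless close in spirit to the original Alperin--Brou\'e argument: the key technical point is exactly the uniqueness lemma you isolate, and your treatment of the normal-inclusion case via the algebra homomorphism $\Br_R:(kC_G(Q))^R\to kC_G(R)$ and the vanishing of non-trivial $R$-orbit sums is correct and standard. The deductions of part~(a) and of the connected-component statement from the lemma are fine (in the latter, you should explicitly run along a chain of normal inclusions rather than invoke $\Br_R\circ\Br_Q=\Br_R$ directly, since multiplicativity of $\Br_R$ on arbitrary elements of $kC_G(Q)$ is not available, but the fix is routine).

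There is, however, a genuine gap in your argument for part~(b). From $\Br_R(e_{\bar B})f=f$ you cannot conclude that $\Br_{R'}(e_{\bar B})\ne0$ for an arbitrary normal $p$-overgroup $R\lhd R'$; what you actually need is $\Br_{R'}(f)\ne0$, which can fail. Even if you choose $R'$ so that $\Br_{R'}(e_{\bar B})\ne0$ (which is possible via the characterisation of defect groups as maximal $p$-subgroups with nonvanishing Brauer image of $e_{\bar B}$), you then only obtain some $(R',f')\in\cP(B)$, and the unique $(R,f_0)\le(R',f')$ furnished by your lemma need not have $f_0=f$. So maximality of $(R,f)$ is not contradicted. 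The usual route is the reverse of yours: one first proves transitivity of $G$ on maximal $B$-Brauer pairs (this is where the relative trace map and the defect-class argument enter), and then uses that at least one maximal pair has a defect group as first component to conclude that they all do.
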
 

The  following is sometimes known as  Brauer's extended first main theorem.

\begin{thm} [Recognition of maximal Brauer pairs]  \label{thm:recog}
 Let $ (Q, e) \in\cP(G)$. Then $(Q,e)$ is maximal if and only if there exists
 $\theta \in \Irr(e) $ such that  
 \begin{itemize}
  \item $ Z(Q) \leq \ker (\theta) $,
  \item as a character of $C_G(Q)/Z(Q) $, $\theta  $ is of  defect $0$, and
  \item $ N_G(Q, e)/ Q C_G(Q) $  is a $p'$-group.
 \end{itemize} 
\end{thm}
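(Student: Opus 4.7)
The plan is to combine Theorem~\ref{thm:AlpBro} with Brauer's first main theorem~\ref{thm:Br1} and the covering theory of blocks for the normal subgroup $C_G(Q)\unlhd N_G(Q,e)$, reducing the three conditions to a single equality of defect groups.

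First, I would reformulate (i) and (ii). Since $Z(Q)\le Z(C_G(Q))$ is a central $p$-subgroup of $C_G(Q)$, it is contained in every defect group of every block of $C_G(Q)$, in particular in that of $e$. The existence of $\theta\in\Irr(e)$ which factors through $C_G(Q)/Z(Q)$ and has defect zero there is equivalent to saying that $e$ deflates to a block of defect zero of $C_G(Q)/Z(Q)$, which in turn is equivalent to $e$ having defect group exactly $Z(Q)$ (the matching of central characters on $Z(Q)$ being part of what~(i) records). Thus (i)$\wedge$(ii) reduces to the single statement \emph{$e$ has defect group $Z(Q)$}.

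Next, by Theorem~\ref{thm:AlpBro}, $(Q,e)$ is maximal iff $Q$ is a defect group of the unique block $B$ of $G$ with $(Q,e)\in\cP(B)$. Set $T:=N_G(Q,e)$. By Fong--Reynolds, the $T$-stable block $e$ of $C_G(Q)\unlhd T$ is covered by a unique block $b'$ of $T$ whose defect groups coincide with those of the Brauer correspondent $b$ of $B$ in $N_G(Q)$; by Theorem~\ref{thm:Br1}, these are the defect groups of $B$ itself. Hence ``$Q$ is a defect group of $B$'' translates to ``$Q$ is a defect group of $b'$''. Let $D$ be a defect group of $b'$. Since $Q\unlhd T$ is a normal $p$-subgroup, $Q\le O_p(T)\le D$; the standard Clifford-theoretic block-covering lemma gives moreover, after suitable $T$-conjugation, that $D\cap C_G(Q)$ is a defect group of $e$ and $DC_G(Q)/C_G(Q)\in\Syl_p(T/C_G(Q))$. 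Using the identity $|D|=|D\cap C_G(Q)|\cdot|DC_G(Q)/C_G(Q)|$, the equality $D=Q$ reduces to $D\cap C_G(Q)=Z(Q)$ together with $QC_G(Q)/C_G(Q)$ being Sylow in $T/C_G(Q)$, i.e., to $e$ having defect group $Z(Q)$ together with $T/QC_G(Q)$ being a $p'$-group. Combining with the reformulation of (i)$\wedge$(ii) yields the equivalence.

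The main obstacle in this plan is the Clifford-theoretic block-covering lemma invoked in the last paragraph: for $b'$ a block of $T$ covering a $T$-stable block $e$ of the normal subgroup $C_G(Q)$, a defect group $D$ of $b'$ can be chosen so that $D\cap C_G(Q)$ is a defect group of $e$ and $DC_G(Q)/C_G(Q)\in\Syl_p(T/C_G(Q))$. This is standard in block theory but not entirely trivial; once it is in hand, the theorem follows from the order arithmetic above.
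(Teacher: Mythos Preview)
The paper does not give a proof of this theorem; it is stated as a classical result (``sometimes known as Brauer's extended first main theorem'') and then used. So there is nothing in the paper to compare your argument against, and your plan has to be assessed on its own.

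Your outline is essentially the standard proof of the extended first main theorem and is correct. The reduction of (i)$\wedge$(ii) to ``$e$ has defect group $Z(Q)$'' is right: since $Z(Q)$ is a central $p$-subgroup of $C_G(Q)$, the blocks of $C_G(Q)$ and of $C_G(Q)/Z(Q)$ are in canonical bijection with defect groups related by $D_e/Z(Q)=D_{\bar e}$, and the required $\theta$ is exactly the inflation of the unique character of the defect-zero block $\bar e$. (Your parenthetical about (i) ``recording central characters'' is slightly misleading: condition~(i) is automatic from the block bijection and is only there to make~(ii) meaningful.)

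Two small caveats. First, the phrase ``the Brauer correspondent $b$ of $B$ in $N_G(Q)$'' presupposes that $Q$ is a defect group of $B$, which is what you are trying to decide. The clean way around this is to note that, since $Q\unlhd N_G(Q)$, every block idempotent of $kN_G(Q)$ lies in $Z(kC_G(Q))$, whence there is a \emph{unique} block $\hat b$ of $N_G(Q)$ covering $e$; one checks $\hat b^G=B$, and then $Q$ is a defect group of $B$ iff of $\hat b$ (one direction is Brauer's first main theorem, the other is its converse). Fong--Reynolds then passes to $T=N_G(Q,e)$ as you say, and uniqueness of $b'$ follows from uniqueness of $\hat b$.

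Second, in the covering lemma the statement $DC_G(Q)/C_G(Q)\in\Syl_p(T/C_G(Q))$ is \emph{not} true for an arbitrary block covering $e$; it holds here precisely because $b'$ is the unique such block (equivalently $e_{b'}=e$ in $Z(kT)$). With that in hand, your order computation $|D|=|D\cap C_G(Q)|\cdot|DC_G(Q):C_G(Q)|$ gives $D=Q$ iff $D\cap C_G(Q)=Z(Q)$ and $|T:QC_G(Q)|$ is prime to $p$, and the equivalence follows.
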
 

\begin{exa}   \label{exa:Sn-local}
As discussed in Example~\ref{exa:first}, an irreducible character $\chi^\la$
of $\fS_n$ lies in a $p$-block of defect zero if and only if $\lambda $ is a
$p$-core. By the Nakayama  conjecture, posed in  \cite{Na} and proved by
Brauer and Robinson \cite{BrRo}, given partitions $\la$, $\la'$ of $n$, the
corresponding characters $\chi^\la$ and $\chi^{\la'}$ lie in the same
$p$-block of $\fS_n$ if and only if $\la $ and $\la'$  have the same $p$-core.

Puig \cite{Pu86} showed how the  block distribution of irreducible characters
matches up with the block distribution of $\cP(\fS_n)$.    
Let $Q$ be a $p$-subgroup  of $\fS_n$. Then $n =  m+ pw $, where $Q$  fixes $m$
points  and moves $pw$ points in the natural permutation representation of $\fS_n$,
and
 $$C_{\fS_n}(Q)  =  \fS_{m} \times  C_{\fS_{pw}}(Q), \qquad
  N_{\fS_n}(Q)  = \fS_{m}  \times  N_{\fS_{pw}}(Q).$$
The  action of $Q\leq\fS_{pw}$ is fixed-point free and it is not hard to show
that this implies  that 
$$C_{C_{\fS_{pw}}(Q)}(O_p(C_{\fS_{pw}}(Q))) \leq O_p(C_{\fS_{pw}}(Q)).$$
By Example~\ref{exa:pconstrained}(b), the principal block  is the unique block
of $C_{\fS_{{pw}}}(Q)$. Thus by Example~\ref{exa:pconstrained}(c) every Brauer
pair with first component $Q$ is of the form $ ef $ where $e$ is a block
idempotent of $k\fS_m$ and $f$ is the principal block idempotent (in fact the
identity element) of $k C_{\fS_{pw}}(Q)$.   

Next, we describe the inclusion of Brauer pairs. This is a difficult and subtle
step and is carried out inductively --- a crucial ingredient is the
Murnaghan--Nakayama rule which is an inductive  combinatorial rule for
calculating  values of irreducible characters of symmetric groups. 
Let $ Q'$ be a $p$-subgroup of $\fS_n$ containing $Q$ and suppose that $Q'$
moves $pw'$ points and fixes $m'$ points. Since $ Q\leq Q'$, $w' \geq  w$ and
$m' \leq m $. Let $e' f'$ be a block of $C_{\fS_n} (Q')$ with $e'$ a block of
$\fS_{m'}$ and $f'$ the principal block idempotent of 
$k C_{\fS_{pw'}}(Q')$. Then one can show that $(Q,ef)\leq(Q,e' f')$ if and
only if $\la $ is obtained from $\mu$ by adding a sequence of $p$-hooks for
$\chi^\lambda\in\Irr(e)$, $\chi^\mu\in\Irr(e')$. 

Finally, we describe the maximal pairs. Applying Theorem~\ref{thm:recog} in
both directions, one sees that $(Q,e f_Q)$ is a maximal $G$-Brauer pair if and
only if $(1,e)$ is a maximal Brauer pair for $\fS_m$ and $(Q,f)$ is a maximal
$\fS_{pw}$-Brauer pair. By the inclusion rule described above, $(1,e)$ is a
maximal Brauer pair for $\fS_m$ if and only if $e$ is the block idempotent of
a block of $\fS_m$ of defect zero, that is, a block whose unique irreducible
character is of the form $\chi^\la$ where $\la$ is a $p$-core. Since $f$ is a
principal block idempotent, by Brauer's third main theorem $(Q,f)$ is a maximal
$\fS_{pw}$-Brauer pair if and only if $Q$ is a Sylow $p$-subgroup of $\fS_{pw}$. 
Thus, the $G$-conjugacy classes of maximal $\fS_n$-Brauer pairs are in
bijection with pairs $(\mu, w)$ where $w$ is a non-negative integer such that
$ pw \leq n$ and $\mu  $ is a partition of  $n-pw$ which is a $p$-core.
By Theorem~\ref{thm:AlpBro} the $G$-conjugacy classes of maximal $G$-Brauer
pairs are in one-to-one correspondence  with the blocks of $G$. Hence we obtain
a bijection  
between the set of blocks of $\fS_n $ and pairs as  above; the
character $\chi^\la\in\Irr(\fS_n)$ lies in the block indexed by the pair
$(\mu,w)$ if and only if $\mu$ is the $p$-core of $\la$. If a block $B$ is
indexed by the pair $(\mu, w)$, then $w$ is called the \emph{weight of $B$}
and $\mu $ is called the \emph{core of $B$}.

If $B$ has weight $w$ and core $\mu $, then a Sylow $p$-subgroup $P$ of
$\fS_{pw}$ is a defect group of $B$ and the Brauer correspondent of $B$ in 
$$N_{\fS_n}(P)=\fS_{m}\times N_{\fS_{pw}}(P)\qquad\text{where $\mu\vdash m$}$$     
has the form
$$B_\mu  B_w$$
where $B_\mu$ is the block of $\fS_m$ indexed by the pair $(\mu,0)$
and $B_w$ is the principal block of $N_{\fS_{pw}}(P)$. The irreducible
characters in the Brauer correspondent are of the form 
$$ \chi^{\mu}. \eta, \qquad \text{where $ \eta \in \Irr(C_w) $}. $$   
From this, it is easy to check that the map $ \chi^\mu. \eta \mapsto\eta$
is a height preserving bijection between the set of irreducible characters of
the Brauer correspondent of $B$ and the set of irreducible characters of $C_w$.
Thus we obtain:
\begin {enumerate}[\rm(I)]
\item  Given any non-negative integer $w$, there is a height preserving
 bijection between the irreducible characters of a Brauer correspondent of a
 weight $w$-block of a symmetric group and the principal block of
 $N_{\fS_{pw}}(P)$, where $P\leq\fS_{pw}$ is a Sylow  $p$-subgroup. In
 particular, there is a height preserving bijection between the irreducible
 characters of the Brauer correspondents of any two weight $w$ blocks of
 (possibly different) symmetric groups.  
\end{enumerate}
In \cite{E90}  Enguehard showed that the global analogue of the above
statement also holds, namely:
\begin{enumerate}[\rm(II)]
\item Given any non-negative integer $w$, there is a height preserving
 bijection between the irreducible characters of any two weight $w$ blocks of
 (possibly different) symmetric groups.  
\end{enumerate}

Thus the problem of checking a desired local-global statement for blocks of
symmetric groups can often be reduced to checking it for a single block of any
given weight~$w$.  

Let us consider Brauer's height zero conjecture (Conjecture~\ref{conj:BHZ})
for $p=2$. Since blocks with the
same weight have isomorphic defect groups, in order to prove the height zero
conjecture for blocks of symmetric groups it suffices to prove that it holds
for the principal block $B$ of $\fS_{2w}$.
The defect groups of $B$ are the Sylow $2$-subgroups of $S_{2w}$. Hence $B$
has abelian defect groups if and only if $w=1$. On the other hand,   
$$\Irr(B)=\{\chi^{\la}\mid\la\vdash 2w\text{ and $\la$ has empty $2$-core}\}.$$
Since $B$ is the principal block, $\Irr_0(B) = \Irr(B)\cap\Irr_{2'}(\fS_{2w})$.
Thus we are reduced to checking the following statement:
$$\text{$w\geq 2$ if and only if there is $\la \vdash n$ such that $\la$
  has empty $2$-core and $2$ divides  $\chi^{\lambda}(1)$.}$$
The backward implication is  immediate as the only partitions of $2$ are $(2)$
and $(1,1)$. Now suppose that $ w \geq 2$. Then $ \la = (2w-1,1)$ has empty
$2$-core --- we first remove successively $w-1$ horizontal hooks of length $2$
from the first part of the Young diagram,  then remove the remaining  vertical
$2$-hook. The hook length formula (see Example~\ref{exa:Sn}) easily yields that
$\chi^\lambda$ has even degree.

The local-local and global-global bijections described in~(I) and~(II) above
are shadows of deeper categorical equivalences. It is quite easy to deduce
from the above discussion that any two Brauer correspondents of blocks of
symmetric groups with the same weight are Morita equivalent. Much harder is
the analogous global version proven by Chuang and Rouquier \cite{CR}: any two
$p$-blocks of symmetric groups of the same weight are derived equivalent.
\end{exa}

\begin{exa} \label{exa:defining}
Groups of Lie type in characteristic $p$ have very few $p$-blocks. The main
structural reason for this is the Borel--Tits theorem. As in
Section~\ref{subsec:lietypegroups}, let $\bG$ be a simple algebraic group over
$\overline{\FF}_p$ with a Frobenius endomorphism $F:\bG\rightarrow\bG$ and let
$ G=\bG^F$. Suppose that $\bG$ is simply connected and $Z(G)=1$. Then the
Borel--Tits theorem \cite[Thm.~26.5]{MT} implies that if $Q$ is a non-trivial
$p$-subgroup of $G$, then 
$$C_{N_G(Q)}(O_p(N_G(Q)))\leq O_p(N_G(Q)).$$
By Example~\ref{exa:pconstrained}(b) applied to $N_G(Q)$, the principal block    
is the unique block of $N_G(Q)$. Put another way, the identity element is the
unique central idempotent of $kN_G(Q)$. Now suppose that $f$ is a block
idempotent of $kC_G(Q)$. Then it is easy to see that the sum $f'$ of the
distinct $N_G(Q)$-conjugates of $f$ is a central idempotent of $kN_G(Q)$. Hence
$f'$ is the identity element of $kN_G(Q)$. From this it follows that $f=f'$ is
the identity element of $kC_G(Q)$, and consequently $f$ is the principal block
of $kC_G(Q)$. In other words, the only $G$-Brauer pair with first component
$Q$ is the pair $(Q,f)$, where $f$ is the principal block idempotent of
$kC_G(Q)$. Now Brauer's third main theorem (Theorem~\ref{thm:Br3}) gives that
if $(Q, e)\in\cP(G)$ with $Q\ne 1$, then $(Q,e)\in\cP(B_0)$, where $B_0$ is
the principal block of $G$.  We conclude that the irreducible characters
$\chi$ of $G$ lying outside the principal block are all of defect zero. It
turns out that there is only one character of defect zero, namely the the
Steinberg character \cite[Thm.~8.3]{Hu06}. Thus $G$ has precisely two blocks:
the principal block and the block containing the Steinberg character.   

If the assumption that $Z(G)=1 $ is dropped, then we obtain more blocks, but
the extra blocks are in bijection with the non-trivial elements of $Z(G)$.
More precisely, we have the following \cite[Thm.~8.3]{Hu06}: Suppose that
$\bG$ is simple and simply connected. The blocks of non-zero defect of $G$ are
in bijection with the elements of $Z(G)$ and all have the Sylow $p$-subgroups
of $G$ as defect groups. There is exactly one block of zero defect, namely
the block containing the Steinberg character of $G$. 
\end{exa}

\subsection{Blocks of groups of Lie type in non-defining characteristic.}
We continue in the setting and notation of Section~\ref{subsec:lietypegroups},
so $\bG$ is a connected reductive group over $\overline{\FF}_p$ with a
Frobenius endomorphism $F:\bG\rightarrow\bG$ and $G=\bG^F$.
Let $\ell$ be a prime different from $p$. Our aim is to  give a broad idea of
how  the $\ell$-block partition of $\Irr(G)$ can be described in terms of
Lusztig's parametrization of $\Irr(\bG^F)$. For notational simplicity for any
$F$-stable subgroup $\bH$ of $\bG$ or of  the dual group $\bG^*$, we will
denote by $H$ the $F$-fixed point subgroup $\bH^F$. For a character $\chi$ of
$G$ and $x\in G$ an $\ell$-element denote by $d^{x,G}\chi : C_G (x) \to \cO$
the function $d^{x}\chi$ as defined for Brauer's second main theorem (see
Section~\ref{sec:locblock}).

A key starting point is the following result which relates generalised
decomposition maps to Deligne--Lusztig induction and restriction. Let $\bT$ be
an $F$-stable maximal torus of $\bG$, $T:=\bT^F$, and let $x\in T_\ell$.
Set $\bH:=C_\bG^\circ(x)$, the connected component of the centraliser of $x$
in $\bG $. The group $\bH$ is again a connected reductive group which is
$F$-stable  and $H:=\bH^F$ is a normal subgroup of $C_G(x)$ which may be proper
(equality holds for example if $C_\bG(x)$ is itself connected). However,
since $x$ is an $\ell$-element, the general structure theory of connected
reductive groups gives that the index of $H$ in $C_G(x)$ is a power of $\ell$.
By definition, for any character $\chi$ of $G$, $d^{x,G}\chi$ is a function
which takes zero values on $\ell$-singular elements. Since $H$ contains all
$\ell$-regular elements of $C_G(x)$, we may regard $d^{x,G}\chi$ as a
function from $H$ to $\cO$. 
Note that $\bT$ is a maximal torus of $\bH$ so $\RTH$ and $\sRTH$ are defined. 

\begin{thm}   \label{thm:commut}
 $$\sRTH(d^{x,G}\chi)=d^{x,T}(\sRTG(\chi))\qquad
   \text{for all $\chi\in\Irr(G) $}, $$
 that is, Deligne--Lusztig restriction commutes with generalised decomposition
 maps.
\end{thm}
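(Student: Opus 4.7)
The plan is to reduce to $\chi = R_{\bS}^{\bG}(\theta)$ for $\bS$ an $F$-stable maximal torus of $\bG$ and $\theta \in \Irr(\bS^F)$, and then to unwind both sides using the Deligne--Lusztig character formula. Both sides of the identity are $\ZZ$-linear in $\chi$, and the virtual characters $R_{\bS}^{\bG}(\theta)$ span $\QQ\otimes\ZZ\Irr(G)$ (a standard strengthening of the Deligne--Lusztig theorem quoted above), so this reduction loses nothing.

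For such a $\chi$, I would first compute $d^{x,G}\chi(h) = \chi(xh)$ for $h \in H_{\ell'}$. Since $\ell \ne p$, the element $x$ is semisimple; writing $h = h_s h_u$ in Jordan form, $xh$ has semisimple part $xh_s$ and unipotent part $h_u$. The Deligne--Lusztig character formula gives
\[
\chi(xh) = \frac{1}{|C_\bG^\circ(xh_s)^F|}\sum_{g\in G\,:\,xh_s\in\tw{g}\bS} Q_{\tw{g}\bS}^{C_\bG^\circ(xh_s)}(h_u)\,\theta\bigl(g^{-1}(xh_s)g\bigr).
\]
The key geometric observation is that $h_s$ commutes with $x$, so $C_\bG^\circ(xh_s)\subseteq C_\bG^\circ(x) = \bH$; hence every Green function and every torus $\tw{g}\bS$ contributing is intrinsic to $\bH$. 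After reorganising the sum, $d^{x,G}\chi$ becomes a $\ZZ$-linear combination of virtual characters of the form $R_{\bS'}^{\bH}(\theta_g')$, where $\bS'$ ranges over certain $F$-stable maximal tori of $\bH$ and each $\theta_g'$ is built from $\theta$ via conjugation and the scalar $\theta(g^{-1}xg)$.

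Applying $\sRTH$ and then using the Mackey formula in $\bH$ collapses this to a sum indexed by double cosets relating $\bT$ to the various $\bS'$. The same Mackey calculus applied to $\sRTG R_{\bS}^{\bG}(\theta)$, evaluated at $xy$ with $y\in T_{\ell'}$, produces $\sRTG(\chi)(xy) = d^{x,T}\sRTG(\chi)(y)$, and the two double-coset sums match under the identification of $F$-stable maximal tori of $\bH$ with their images in $\bG$. The main obstacle will be the combinatorial bookkeeping of these two Mackey expansions, in particular showing that the $\bG$-side contributions to the value at $xy$ come exactly from those double cosets with representatives in $H$, so that the $\bG$-Mackey formula collapses to the $\bH$-Mackey formula. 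A more conceptual alternative avoids this combinatorics by working with Deligne--Lusztig varieties: the $x$-fixed subvariety of $Y_{\bT}^{\bG}$ is, up to the finite $\ell$-group $C_G(x)/H$, the Deligne--Lusztig variety $Y_{\bT}^{\bH}$, and the Lefschetz trace formula for the commuting actions of $x$ and $\bT^F$ delivers the commutativity directly.
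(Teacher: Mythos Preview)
The paper does not give a proof of this theorem; it is quoted as a known ``key starting point'' for the block-distribution results that follow, so there is no proof in the paper to compare against. Nevertheless, let me assess your proposal on its own.

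Your reduction step contains a genuine error. The virtual characters $R_\bS^\bG(\theta)$ do \emph{not} span $\QQ\otimes\ZZ\Irr(G)$ in general: their $\QQ$-span is the space of \emph{uniform} class functions, which is typically a proper subspace (already for $\SL_2(q)$ with $q$ odd the two irreducible characters of degree $(q-1)/2$ are not individually uniform). The Deligne--Lusztig theorem asserts only that every $\chi\in\Irr(G)$ is a \emph{constituent} of some $R_\bS^\bG(\theta)$, which is strictly weaker than spanning. So the reduction to $\chi=R_\bS^\bG(\theta)$ is illegitimate, and the subsequent Mackey-formula bookkeeping, even if carried out correctly, would only prove the identity on uniform functions.

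The reduction is in fact unnecessary. The explicit character formula for Lusztig restriction (the adjoint of the Deligne--Lusztig character formula; see e.g.\ \cite[Prop.~12.2]{DM91}) gives, for any class function $f$ on $G$ and any $t\in T$,
\[
(\sRTG f)(t)=|C_\bG^\circ(t)^F|^{-1}\sum_{u}Q_\bT^{C_\bG^\circ(t)}(u)\,f(tu),
\]
the sum running over unipotent elements $u$ of $C_\bG^\circ(t)^F$. Apply this formula to both sides of the claimed identity at a point $t\in T_{\ell'}$ (both sides vanish for $t\notin T_{\ell'}$). Since $x$ and $t$ are commuting semisimple elements of coprime order, $C_\bG^\circ(xt)=C_\bH^\circ(t)$, and as $u$ is a $p$-element the product $tu$ is $\ell$-regular, so $(d^{x,G}\chi)(tu)=\chi(xtu)$. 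The two expressions are then visibly equal, for arbitrary $\chi$. Your closing geometric sketch via $x$-fixed points on the Deligne--Lusztig variety is also a valid route and is essentially what underlies this character formula.
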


For an $F$-stable maximal torus $\bT$ of $\bG$ let $\Irr(T)_{\ell'}$ denote the
subset of irreducible characters of $T$ of $\ell'$-order, that is
$\Irr(T)_{\ell'}$ consists of those irreducible characters $\theta$ such that
$T_\ell\leq \ker(\theta)$. Let $\cE(G,\ell')$ denote the subset of $\Irr(G)$
consisting of those $\chi$ such that $\langle\RTG(\theta),\chi\rangle\ne 0$
for some $F$-stable maximal torus $\bT$ of $\bG$ and some
$\theta\in\Irr(T)_{\ell'}$. The following theorem illustrates how Brauer's
local block theory and the theory of Deligne--Lusztig characters come together.

\begin{thm}   \label{thm:BrLu}
 Let $\bT$ be an $F$-stable maximal torus of $\bG$, $\theta\in\Irr(T)_{\ell'}$.
 Suppose that     
 $$C_\bG(T_\ell) =\bT.\eqno{(*)} $$   
 Then all elements of $\cE (G|(\bT, \theta))$ lie in  the same $\ell$-block
 $B$ of $G$. Further, if $e$ is the block idempotent of $kT =  k C_G(T_\ell)$
 containing $\theta$, then $ (T_\ell, e ) $ is a $B$-Brauer pair.  
\end{thm}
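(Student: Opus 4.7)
The plan is to identify the Brauer pair $(T_\ell, e)$ with the $\ell$-block $B$ of $G$ containing any $\chi \in \cE(G|(\bT,\theta))$ via Brauer's Second Main Theorem (Theorem~\ref{thm:Br2}), bridged by the commutation formula in Theorem~\ref{thm:commut}. First I would fix $\chi \in \cE(G|(\bT,\theta))$, let $B$ denote its $\ell$-block, and observe that since $T$ is abelian the block $b$ of $T$ containing $\theta$ consists precisely of those $\theta' \in \Irr(T)$ agreeing with $\theta$ on $T_{\ell'}$, equivalently of the set $\{\theta\lambda : \lambda \in \Irr(T_\ell)\}$; in particular $b$ has defect group $T_\ell$, and its block idempotent is $e$.

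Next I would choose an $\ell$-element $x \in T_\ell$ with $C_\bG(x) = \bT$. Since $\bT$ is connected, this gives $\bH := C_\bG^\circ(x) = \bT$ and $H = T$, so $\sRTH$ is the identity. Theorem~\ref{thm:commut} then collapses to
$$d^{x,G}\chi = d^{x,T}\bigl(\sRTG(\chi)\bigr)$$
as class functions on $T$. Expanding $\sRTG(\chi) = \sum_{\theta'} n_{\theta'} \theta'$ with $n_\theta \neq 0$ and using multiplicativity of characters of the abelian group $T$, a direct calculation yields for every $\psi \in \Irr(T)$
$$\langle d^{x,G}\chi, \psi\rangle_T = \frac{1}{|T_\ell|} \sum_{\theta' \in b_\psi} n_{\theta'}\,\theta'(x),$$
where $b_\psi$ is the $\ell$-block of $T$ containing $\psi$. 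Restricted to $\psi \in b$, this pairing is $|T_\ell|^{-1}\theta(x) \sum_\lambda n_{\theta\lambda}\lambda(x)$, a non-trivial character sum on $T_\ell$ (since $n_\theta \neq 0$) and in particular not identically zero. If a single $x \in T_\ell$ realises both ``$C_\bG(x) = \bT$'' and ``this block sum does not vanish at $x$'', then Brauer's Second Main Theorem applied to any $\psi \in b$ gives $(\langle x\rangle, e) \in \cP(B)$.

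To extend from $(\langle x\rangle, e)$ to $(T_\ell, e)$ I would verify the normal inclusion of Brauer pairs: $\langle x\rangle \unlhd T_\ell$ since $T_\ell$ is abelian; $T_\ell$ stabilises $e$ because $e$ lies in the centre of the commutative algebra $kT$; and $\Br_{T_\ell}(e) = e$ because $e$ is supported on $T = C_T(T_\ell)$. Theorem~\ref{thm:AlpBro} then promotes $(\langle x\rangle, e)\in\cP(B)$ to $(T_\ell, e) \in \cP(B)$. Since the Brauer pair $(T_\ell, e)$ so obtained depends only on $(\bT, \theta)$ through the block $b$ of $T$, and each Brauer pair determines a unique $\ell$-block of $G$, all characters in $\cE(G|(\bT, \theta))$ must belong to the same block $B$, which settles both assertions.

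The main obstacle will be the simultaneous existence statement underlying the choice of $x$: an element $x \in T_\ell$ with $C_\bG(x) = \bT$ exists iff $x$ avoids all root kernels $\ker(\alpha|_{T_\ell})$, and hypothesis $(*)$ is precisely what guarantees that each such kernel is a proper subgroup of $T_\ell$; however for non-cyclic $T_\ell$ these proper subgroups can cover $T_\ell$ set-theoretically, so no single regular $x$ exists. In that case I would replace the single-element argument by an induction along a chain of $\ell$-subgroups $1 < Q_1 < \cdots < Q_s = T_\ell$ with $Q_{i+1}/Q_i$ cyclic, applying the more general form of Theorem~\ref{thm:commut} (with $\bH_i := C_\bG^\circ(Q_i)$ possibly strictly larger than $\bT$) to propagate the Brauer pair identification along the chain, terminating at $(T_\ell, e)$; alternatively, one can argue directly with the Brauer morphism $\Br_{T_\ell}$ and establish the idempotent identity $\Br_{T_\ell}(e_{\bar B})\,e \neq 0$ from the character-sum data above.
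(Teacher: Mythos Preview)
Your fallback argument---an inductive chain $Q_1 < Q_2 < \cdots < Q_s = T_\ell$ along generators, applying Theorem~\ref{thm:commut} at each stage with $\bH_i = C_\bG^\circ(Q_i)$---is exactly the route the paper takes (its Step~2), and for the reason you identify: no single regular $\ell$-element of $T$ need exist. So the proposal is correct and ultimately converges to the paper's proof.

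Two remarks on where you diverge. First, the paper does not attempt the regular-element shortcut at all; it goes straight to the chain. What makes the chain step clean is a fact you do not invoke: because $\chi \in \cE(G|(\bT,\theta))$ with $\theta \in \Irr(T)_{\ell'}$, \emph{every} constituent of $\sRTG(\chi)$ lies in $\Irr(T)_{\ell'}$ (this is the disjointness of rational Lusztig series). Consequently $n_{\theta\lambda} = 0$ for every nontrivial $\lambda \in \Irr(T_\ell)$, your ``block sum'' collapses to $n_\theta$, and the pairing $\langle d^{x,T}(\sRTG\chi),\theta\rangle = n_\theta/|T_\ell|$ is nonzero for \emph{every} $x \in T_\ell$. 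So the vanishing worry you flag is a red herring; the only genuine obstruction to the one-step argument is that $C_\bG(x)$ may be strictly larger than $\bT$, and that is precisely what the chain handles. Second, in the paper's chain argument the block $f_i$ at level $Q_i$ is pinned down not by an idempotent calculation but by the condition $\cE(C_G(Q_i)|(\bT,\theta)) \cap \Irr(f_i) \neq \emptyset$, which is what propagates from one level to the next; only at the final step $Q_m = T_\ell$ does hypothesis~$(*)$ force $C_G(Q_m) = T$ and hence $f_m = e$.
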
 
 
The proof of the above  theorem  goes the following way (details may be found
in  \cite[Props.~2.12,~2.13,~2.16]{KM13}). By general structure theory the
group $C_\bG(Q)$ is a reductive algebraic group for any $Q \leq T_\ell$. For
simplicity we will assume that $C_\bG (Q)$ is also connected. 
\begin{enumerate}
\item[Step 1:] Let $\chi\in\cE(G|(\bT,\theta))$. By the adjointness of
 Deligne--Lusztig induction and restriction, $\theta$ is a constituent of the
 virtual character $\sRTG(\chi)$. Another key property of these maps is that
 since $\chi$ is a constituent of $\RTG(\theta)$ and $\theta\in\Irr(T){\ell'}$,
 all irreducible constituents of $\sRTG(\chi)$ belong to $\Irr(T)_{\ell'}$.
 Write
 $$\sRTG(\chi) = a_{\theta}\theta + \sum_{\tau\in\Irr(T)_{\ell'}\setminus \{\theta \}} a_{\tau} \tau, \qquad  \text{with   $a_{\theta}, a_{\tau}  \in \ZZ$,   and   $a_{\theta}  \ne 0$}. $$  
Let $ x \in T_\ell $.  Since $T$ is  an abelian group, it follows  easily  from  the definition of  generalised decomposition maps    that  if $\theta_1, \theta_2  \in \Irr(T)_{\ell'} $  then 
$$ \langle  d^{x, T} \theta_1, \theta_2 \rangle   = \frac{1}{|T_\ell|} \langle  \theta_1, \theta_2 \rangle. $$
Applying this to  the  above expression for $\sRTG(\chi)$  gives
$$ \langle d^{x, T} ( \sRTG (\chi)),   \theta   \rangle   = \frac{1}{|T_\ell|} a_{\theta} \ne 0 . $$
By the commutation property in Theorem~\ref{thm:commut},
$$ \langle d^{x, T} ( \sRTG (\chi)),   \theta   \rangle =  \langle \sRTH ( d^{x, G} \chi ),   \theta   \rangle =\langle d^{x, G} \chi,   \RTH (\theta)   \rangle $$ where $ \bH=C_\bG(x) $ and where the second equality holds by adjointness.
Now  Brauer's Second  Main Theorem~\ref{thm:Br2} implies that there exists a
$B$-Brauer pair $(\langle x \rangle , f) $ such that 
$$ \cE (C_G(x)   | (\bT, \theta)) \cap \Irr (f) \ne 0 . $$   

\item[Step 2:]  Let  $ \{ x_1,  \ldots, x_m \} $   be a generating set  of  $T_\ell$  and let $ Q_i =\langle  x_1, \ldots, x_i \rangle $, $ 1\leq  i \leq m $.   Applying  Step 1  repeatedly  with $G$  replaced   by $  C_G(Q_i) $, we  obtain a   sequence of inclusions of   $B$-Brauer pairs
$$ (Q_1, f_1)  \leq   \ldots \leq  (Q_m,  f_m )  $$   such that  
$$ \cE (C_G(Q_i)   | (\bT, \theta)) \cap \Irr (f_i) \ne 0  \qquad\text{for all $i$}. $$ 
 Since $Q_m =   T_\ell $, the hypothesis (*) implies that   $ C_G(Q_m) = T $. Since $\theta $ is in $\Irr(e) $ as well as  in  $\Irr(f_m)$, $f_m = e $, and
the uniqueness of inclusion of Brauer pairs allows us to conclude.  
\end{enumerate}

\begin{exa} \label{exa:GLBrLu}
(a) Let $\bG =\GL_n$, $G=\GL_n(q)$. If $\ell$ divides $q-1$, then the 
$F$-stable maximal torus $\bT$ of diagonal matrices of $\bG$ satisfies the
hypothesis~(*) of Theorem~\ref{thm:BrLu}. 
Thus, for any $\theta \in \Irr(T)_{\ell'}$,  all constituents of $ \RTG (\theta) $ lie in the same $\ell$-block.     By contrast, if $\ell$ does not divide $q-1$, then    $\bT^F$    has trivial $\ell$-part. \par
(b) Suppose that $\bG$ is simple of classical type $A$, $B$, $C$, or $D$ and
$\ell = 2$. For any $F$-stable maximal torus  $\bT $ of $ \bG$, all elements
of $\cE(G|(\bT,1))$ lie in the principal 2-block of $G$ \cite{CE93}.
The key property is that $T_2$ is non-trivial for all $F$-stable maximal tori
$\bT$  (of all $F$-stable Levi subgroups) of $\bG$. One  applies Step~(1) of
the proof of Theorem~\ref{thm:BrLu}  to some  non-trivial   $x$ in $T_2 $  and then proceeds by induction on the dimension (as algebraic group) of $C_\bG(x)$. 
\end{exa}

The hypothesis (*) of Theorem~\ref{thm:BrLu} does not hold often enough to
obtain satisfactory control of  block distribution of characters. The strategy
to get around this is to replace $F$-stable maximal tori by a certain class of
well behaved $F$-stable Levi subgroups.  

Let $ \bP \leq \bG $ be a parabolic subgroup and let $ \bL \leq  \bP$ be a Levi complement.  If $\bL$    is $F$-stable, then we  have a pair of  
mutually adjoint linear maps, called Lusztig induction and restriction,
$$\RLG:\ZZ\Irr(\bL^F)\to\ZZ\Irr(\bG^F),\qquad
  \sRLG:\ZZ\Irr(\bG^F)\to\ZZ  \Irr(\bL^F), $$
enjoying many of the same properties as  the maps $\RTG $ and $\sRTG$.  The
construction involves the parabolic subgroup $\bP$,  and hence strictly speaking  the notation for  $\RLG$ should include $\bP$. However, we take the liberty of omitting this as in almost all situations it is known that  the construction is independent of the choice of $\bP$.   If $\bP$ is also $F$-stable  then Lusztig  induction  of an irreducible character $\chi$   of $L$  is   the same as Harish-Chandra induction    of $\chi $  as considered in the previous section.

For $\la\in\Irr(L)$ we let $\cE( G|(\bL,\la))$ denote the set of irreducible
constituents of   $\RLG (\la) $.       We have an analogue of Theorem \ref{thm:BrLu}  which we state under an assumption on the prime $\ell $ being ``large enough".  This   assumption 
can be replaced by other conditions, e.g. $|\cE ( L, \ell' )\cap\Irr (e)|=1$.

\begin{thm}   \label{thm:BrLuLevi}
 Suppose that $\ell\geq 7$. Let $\bL$ be an $F$-stable Levi subgroup of $\bG$
 and let  $\la \in \cE (L, \ell' )$.  Suppose that
 $$ C_\bG (Z(L)_\ell )=\bL.\eqno{(**)} $$      
 Then  all elements of $\cE (G  | (\bL, \la)) $ lie in  the same $\ell$-block
 $B$ of $G$. Further, if $e$ is the block idempotent of $kL$ containing
 $\theta$, then $(Z(L)_\ell,e)$ is a $B$-Brauer pair.   
\end{thm}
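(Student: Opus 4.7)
The plan is to generalise the proof of Theorem~\ref{thm:BrLu} from tori to Levi subgroups, replacing $\bT$ by $\bL$ and $T_\ell$ by $Z(L)_\ell$. The overall scheme is again (i) establish a commutation of Lusztig restriction with generalised decomposition maps, (ii) use it together with Brauer's second main theorem (Theorem~\ref{thm:Br2}) to produce a $B$-Brauer pair with first component a cyclic $\ell$-subgroup of $Z(L)_\ell$, and (iii) iterate along a generating set of $Z(L)_\ell$ to reach $(Z(L)_\ell,e)$.

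First, for each $x \in Z(L)_\ell$ set $\bH := C_\bG^\circ(x)$. Since $\bL = C_\bG(Z(L)_\ell) \le C_\bG(x)$ and $\bL$ is connected, $\bL \le \bH$; in fact $\bL$ is an $F$-stable Levi subgroup of $\bH$. The key technical input is the Levi analogue of Theorem~\ref{thm:commut},
$$
{}^*\!R_\bL^\bH\bigl(d^{x,G}\chi\bigr) \;=\; d^{x,L}\bigl(\sRLG(\chi)\bigr),
$$
which, under the good-prime hypothesis $\ell \ge 7$, follows by a Mackey-type argument essentially as in the torus case, the independence-of-parabolic question causing no obstruction here.

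Next, fix $\chi \in \cE(G|(\bL,\lambda))$, so by adjointness $\lambda$ occurs with a nonzero coefficient in $\sRLG(\chi)$. The hypothesis $\lambda \in \cE(L,\ell')$, combined with the compatibility of $\sRLG$ with Lusztig series, forces every constituent $\tau$ of $\sRLG(\chi)$ to lie again in $\cE(L,\ell')$. Since $x \in Z(L)_\ell$ is central in $\bL$, one has $d^{x,L}(\tau)(y) = \omega_\tau(x)\,\tau(y)$ for every $\ell'$-element $y \in L$; summing over $x \in Z(L)_\ell$ weighted by $\overline{\omega_\lambda(x)}$ and invoking orthogonality of central characters extracts at least one $x$ for which $d^{x,L}(\sRLG\chi)$ has a nonzero pairing with some irreducible constituent $\psi$ of $R_\bL^\bH(\lambda)$. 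Combining this with the commutation identity above and with adjointness inside $\bH$ yields
$$
\bigl\langle d^{x,G}\chi,\,\psi\bigr\rangle_{C_G(x)} \;\ne\; 0,
$$
so Theorem~\ref{thm:Br2} produces a $B$-Brauer pair $(\langle x\rangle,f)$ with $\psi \in \Irr(f)$, and the bound $\ell \ge 7$ guarantees that this block $f$ corresponds, via the Brauer correspondence from $\bH$ down to $\bL$, to the block $e$ containing $\lambda$.

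Finally, I would iterate: choose generators $x_1,\ldots,x_m$ of $Z(L)_\ell$, put $Q_i := \langle x_1,\ldots,x_i\rangle$, and apply the preceding argument inductively inside $C_G(Q_{i-1})$ to obtain an ascending chain of $B$-Brauer pairs $(Q_1,f_1) \le \cdots \le (Q_m,f_m)$ whose top block $f_m$ is a block of $kC_G(Q_m) = kL$ (by hypothesis~$(**)$) containing $\lambda$, hence equals $e$; the uniqueness in Theorem~\ref{thm:AlpBro} then delivers $(Z(L)_\ell,e) \in \cP(B)$. I expect the hardest ingredients to be step~(i) together with the block-compatibility statement used at the end of step~(ii): tracking the $\ell$-block through Lusztig induction $R_\bL^\bH$ of a character in $\cE(L,\ell')$ is exactly the subtle point which forces the good-prime bound $\ell\ge 7$, and it is the reason why the stated alternative hypothesis $|\cE(L,\ell')\cap\Irr(e)|=1$ suffices, since it would collapse that block-matching step to a single line.
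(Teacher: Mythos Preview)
The paper does not give a standalone proof of Theorem~\ref{thm:BrLuLevi}; it presents the result as the Levi-subgroup analogue of Theorem~\ref{thm:BrLu}, whose two-step proof (commutation plus Brauer's second main theorem, then iteration along generators) is spelled out just before, and remarks that $\RLG$, $\sRLG$ are ``harder to work with'' than $\RTG$, $\sRTG$. Your proposal follows exactly that template, transported from tori to Levi subgroups, which is the intended approach; your closing remark on the alternative hypothesis $|\cE(L,\ell')\cap\Irr(e)|=1$ also matches the paper's own comment preceding the statement.

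One point to tighten in your step~(ii). The central-character averaging over $Z(L)_\ell$ shows only that the \emph{sum} over $x$ is nonzero, hence that some individual $x$ gives a nonzero pairing; but for the iteration in step~(iii) you need, at each stage, to land in a block $f_i$ of $C_G(Q_i)$ that still meets $\cE(C_G(Q_i)\mid(\bL,\lambda))$, exactly paralleling the conclusion of the paper's Step~1 in the torus case (``$\cE(C_G(x)\mid(\bT,\theta))\cap\Irr(f)\ne0$''). Your phrase ``corresponds, via the Brauer correspondence from $\bH$ down to $\bL$, to the block $e$'' is not quite the right formulation: what you actually need is the nonvanishing of $\cE(H\mid(\bL,\lambda))\cap\Irr(f)$, and it is here that the hypothesis $\ell\ge7$ (or its replacement) enters, since it guarantees that $\cE(L,\ell')$ behaves well enough under $d^{1,L}$ --- essentially the basic-set property --- for the argument to go through. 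You have correctly located this as the crux; just state the inductive invariant in the form that feeds directly into the next application of Step~1.
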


The advantage of Theorem~\ref{thm:BrLuLevi} over Theorem~\ref{thm:BrLu} is that
there are many  Levi subgroups satisfying Condition~(**)-the disadvantage is that $ \RLG$ and $ \sRLG $ are   harder to work with than $ \RTG$ and $ \sRTG $.   It  is  known that every Levi subgroup of $\bG$   is   of the form $\bL=C_\bG(\bS)$ where $\bS \leq \bG $ is a (not
necessarily maximal) torus  $S$. Clearly,  if $\bS$ is  $F$-stable    then so is $\bL$. The class of Levi subgroups that  is  well adapted to     Condition (**) are centralisers of particular $F$-stable tori  which we now describe.

To every  $F$-stable torus  $\bS$ of $\bG$ is associated  a  monic polynomial  $P_{\bS}(x) $ with integer coefficients  called the \emph{polynomial order of   $\bS$}   such that $$ |\bS^{F^m} | =  P_{\bS} (q^m)\qquad \text{for infinitely many integers $m$.}$$ 
The polynomial order of  $\bS $ is uniquely   defined  and  is a   product  of cyclotomic polynomials  $ \Phi_d(x) $, $ d \in {\mathbb N}$.  If the polynomial order of $\bS$ is a  power of  $\Phi_d (x)  $  for  a  single integer $d$, then we say that   $\bS $ is a 
\emph{$\Phi_d $-torus}.  If $\bL$ is the centraliser in $\bG$ of a $\Phi_d $-torus, then $\bL$ is  said to be a \emph{$d$-split Levi subgroup of $\bG$}.      

The following    theorem   of Cabanes and Enguehard which we state under some
simplifying hypotheses shows that  the class of $d$-split Levi subgroups (for
a particular $d$) satisfies (**).

\begin{thm} [Cabanes-Enguehard (1999)]  \label{thm:BrLucond}
 Suppose that $Z(\bG)$ is connected, $[\bG,\bG]$ is simply connected and
 $\ell\geq 7$. Let $d= d_{\ell}(q)  $ be the  order of $q$ modulo  $\ell$. Then every  $d$-split Levi subgroup of
 $\bG$ satisfies condition (**) of Theorem~\ref{thm:BrLuLevi}.  
\end{thm}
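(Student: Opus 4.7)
The strategy is to verify the identity $C_\bG(Z(L)_\ell)=\bL$ by reducing it to a root-theoretic statement about the defining $\Phi_d$-torus of the Levi subgroup, and then to verify that statement by a dimension-counting argument which is exactly where the hypothesis $\ell\ge 7$ enters. The inclusion $\bL\subseteq C_\bG(Z(L)_\ell)$ is immediate, since $Z(L)_\ell\le Z(L)\le Z(\bL)$ is centralised by $\bL$. I would therefore concentrate on the reverse inclusion. Since $\bL$ is $d$-split we write $\bL=C_\bG(\bS)$ with $\bS$ a $\Phi_d$-torus. Then $\bS\le Z(\bL)$, hence $\bS^F\le Z(\bL)^F\le Z(L)$, and consequently $\bS^F_\ell\le Z(L)_\ell$. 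This gives $C_\bG(Z(L)_\ell)\subseteq C_\bG(\bS^F_\ell)$, so it is enough to prove the sharper equality
$$C_\bG(\bS^F_\ell)\;=\;C_\bG(\bS)\;=\;\bL. \qquad(\star)$$

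To establish $(\star)$ I would fix a maximal torus $\bT$ of $\bG$ with $\bS\le\bT$, and denote by $\Phi$ the root system of $\bG$ relative to $\bT$. The hypothesis that $[\bG,\bG]$ is simply connected ensures that centralisers in $\bG$ of semisimple subgroups of $\bT$ are connected, so both sides of $(\star)$ admit the standard root-subgroup description: $C_\bG(\bS)$ is generated by $\bT$ together with the $\bU_\alpha$ for which $\alpha|_\bS=1$, and $C_\bG(\bS^F_\ell)$ is generated by $\bT$ together with the $\bU_\alpha$ for which $\alpha|_{\bS^F_\ell}=1$. Therefore $(\star)$ reduces to the assertion that for every $\alpha\in\Phi$,
$$\alpha|_{\bS^F_\ell}=1 \iff \alpha|_\bS=1.$$
The backward implication is tautological, and the forward implication amounts, for each $\alpha\in\Phi$ with $\alpha|_\bS\ne 1$, to producing an element $s\in\bS^F_\ell$ with $\alpha(s)\ne 1$.

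This last step is where I expect the main obstacle. The inputs are: since $\bS$ is a $\Phi_d$-torus of dimension $m:=\dim\bS$, the polynomial order of $\bS$ is $\Phi_d(x)^m$, so $|\bS^F_\ell|=\Phi_d(q)_\ell^m$, and this is at least $\ell^m$ because $d=d_\ell(q)$ and Lemma~\ref{lem:div} yields $\ell\mid\Phi_d(q)$. On the other hand, for $\alpha\in\Phi$ with $\alpha|_\bS\ne 1$, the subgroup $\ker(\alpha|_\bS)\le\bS$ has dimension $m-1$. After replacing $\alpha$ by the product over its $F$-orbit one obtains an $F$-stable subgroup of $\bS$ of codimension one, and a direct calculation shows that the $\ell$-part of its $F$-fixed points is bounded above by $\Phi_d(q)_\ell^{\,m-1}$ times a constant that depends only on the root datum of $\bG$. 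The bound $\ell\ge 7$ is exactly what is needed: it exceeds the torsion primes of every irreducible root system (all at most~$5$), so it dominates that universal constant and yields the required strict inequality, furnishing an $s\in\bS^F_\ell$ on which $\alpha$ is non-trivial.

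The genuinely delicate point — and the place where a detailed argument has to be carried out rather than sketched — is the counting in the previous paragraph when $\alpha|_\bS$ is not $F$-stable: then $\ker(\alpha|_\bS)$ itself is not $F$-stable, and one must replace it by the $F$-stable subgroup cut out by the $F$-orbit of $\alpha$ without losing too much of the $\ell$-part, for which the lower bound $\ell\ge 7$ is essential. The hypotheses that $Z(\bG)$ is connected and $[\bG,\bG]$ is simply connected are used consistently throughout to guarantee connectedness of $Z(\bL)$ and of the relevant centralisers, so that the root-subgroup description above applies and no component-group pathologies intervene.
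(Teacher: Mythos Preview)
The paper does not give a proof of this theorem: it is stated as a result of Cabanes--Enguehard with a reference to \cite{CE99}, and the text moves directly on to an example. So there is no argument in the paper against which to compare your proposal.

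Your outline is nonetheless along the standard lines and is essentially how the cited result is obtained: pass from $Z(L)_\ell$ to $\bS^F_\ell$ for the defining $\Phi_d$-torus $\bS$, use Steinberg connectedness (via the simply connected hypothesis on $[\bG,\bG]$) to describe both centralisers by root subgroups relative to a common $F$-stable maximal torus, and reduce to the claim that no root $\alpha$ with $\alpha|_\bS\ne1$ can vanish on all of $\bS^F_\ell$. Two remarks. First, a slip: you set $m:=\dim\bS$ and then assert that the polynomial order of $\bS$ is $\Phi_d(x)^m$; since the degree of the polynomial order equals $\dim\bS$, the exponent must be $\dim\bS/\varphi(d)$, not $\dim\bS$. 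Second, your ``dimension-counting'' paragraph is, as you yourself flag, only a sketch: you have not said what the ``universal constant depending only on the root datum'' actually is, nor why $\ell\ge7$ dominates it, and this is precisely where the substance of the Cabanes--Enguehard argument lies. In their treatment (see also \cite{CE}) one does not estimate orders directly; rather one shows, using that $\ell$ is good for $\bG$, that $C_\bG(\bS^F_\ell)$ is itself an $F$-stable Levi subgroup of $\bG$ containing $\bL$, and then identifies it with $\bL$ by comparing the $\Phi_d$-parts of the centres. Your proposal locates the difficulty correctly and has the right architecture, but the step you single out as ``genuinely delicate'' is left unresolved.
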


\begin{exa}
Let $\bG =\GL_n(\overline{\FF}_q)$, $G =\GL_n(\FF_q)$. If $\bL$ is an $F$-stable
Levi subgroup of $\bG$, then 
$$ L \cong  \GL_{a_1}(q^{m_1})  \times  \cdots \times \GL_{a_r}(q^{m_r})$$
for some positive integers $a_i $ and $m_i$, $1\leq i \leq r$, such that
$\sum_i a_i m_i =n$. The group $\bL$ is $d$-split if and only if $m_i = d $
for all $1\leq i \leq r $.
\end{exa}

\subsection{Lusztig series and Bonnaf\'e--Rouquier reduction.}   \label{subsec:luboro}
Another key feature of block theory in non-defining characteristic is that the
subset $\cE(G,\ell')$ controls the $\ell$-block distribution of irreducible
characters. This is made precise in the  following theorem. Note that
$\cE(G,\ell')$ is the union of the Lusztig series $\cE(G,s)$ as $s$ runs over
conjugacy classes of semisimple elements of $\ell'$-order in the dual group
$G^*$.

\begin{thm}[Hiss (1989), Brou\'e-Michel (1988)]   \label{thm:broue-michel-hiss}
 Let $B$ be an $\ell$-block of $G$. There exists a semisimple $\ell'$-element
 $s$ of $G^*$, unique up to conjugacy in $G^*$ such that 
 $$ \Irr(B)\cap\cE(G,s)\ne\emptyset.$$
 If $t$ is a semisimple element of $G^*$ such that
 $\cE(G,t)\cap\Irr(B)\ne\emptyset$, then $t_{\ell'}$ is conjugate in $G^*$ to
 $s$.
\end{thm}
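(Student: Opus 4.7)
The plan is to establish that each $\ell$-block of $G$ is a union of \emph{rational Lusztig series} indexed by semisimple $\ell'$-classes of $G^*$. For each semisimple $\ell'$-element $s$ of $G^*$, define
$$\cE_\ell(G, s) := \bigsqcup_{t}\cE(G, st),$$
where $t$ runs over $C_{G^*}(s)$-classes of semisimple $\ell$-elements of $C_{G^*}(s)$. The uniqueness of the Jordan decomposition $u = u_{\ell'}\cdot u_\ell$, combined with the partition of $\Irr(G)$ into Lusztig series, implies that the $\cE_\ell(G, s)$ partition $\Irr(G)$ as $s$ ranges over $G^*$-classes of semisimple $\ell'$-elements. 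Consequently, once one knows the containment statement of the theorem, the uniqueness of $s$ (up to $G^*$-conjugacy) and the second assertion about $t_{\ell'}$ become automatic.

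The core is to show that every $\ell$-block $B$ lies inside a single $\cE_\ell(G, s)$. Fix $\chi\in\Irr(B)$ and write $\chi\in\cE(G|(\bT, \theta))$ for some $F$-stable maximal torus $\bT$ and $\theta\in\Irr(T)$. Using $T = T_{\ell'}\times T_\ell$, decompose $\theta = \theta_{\ell'}\cdot\theta_\ell$, and let $s\in(\bT^*)^F$ be the $\ell'$-element dual to $\theta_{\ell'}$, so $\cE(G|(\bT, \theta_{\ell'}))\subseteq\cE(G, s)$. I would argue that $\chi$ lies in the same $\ell$-block as any member of $\cE(G|(\bT, \theta_{\ell'}))$. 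The key ingredients are: (i) Brauer's second main theorem (Theorem~\ref{thm:Br2}) applied to an appropriate nontrivial $\ell$-element $x\in T_\ell$; (ii) the commutation identity $\sRTH(d^{x,G}\chi) = d^{x,T}(\sRTG(\chi))$ of Theorem~\ref{thm:commut}, which reduces the computation of $d^{x,G}\chi$ to a calculation inside the abelian group $\Irr(T)$, isolating a $\theta_{\ell'}$-coefficient; and (iii) Theorem~\ref{thm:BrLu}, which under hypothesis~$(*)$ identifies the $\ell$-block containing $\cE(G|(\bT,\theta_{\ell'}))$ via the Brauer pair $(T_\ell, e)$ with $e$ the block idempotent of $kT$ containing $\theta_{\ell'}$. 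Since $\theta_\ell$ has $\ell$-power order it is trivial on $T_{\ell'}$, so $\theta$ and $\theta_{\ell'}$ agree on $T_{\ell'}$ and lie in the same block $e$ of $kT$; the Brauer-pair identification then forces $\chi$ into that block.

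When no maximal torus satisfying $(*)$ is available for $\chi$, I would replace $\bT$ by an $F$-stable $d$-split Levi subgroup $\bL$ (with $d = d_\ell(q)$) and run the analogous argument with $\RLG$ in place of $\RTG$. Theorem~\ref{thm:BrLucond} guarantees that hypothesis~$(**)$ of Theorem~\ref{thm:BrLuLevi} holds for $d$-split Levis. Concretely, realising $\chi$ as a constituent of $\RLG(\lambda)$ for $\lambda\in\Irr(L)$ and factoring off the $\ell$-part of the central character of $\lambda$ yields a character $\lambda_{\ell'}\in\cE(L,\ell')$ whose Lusztig induction produces an element of $\cE(G, s)$ that shares a common Brauer pair $(Z(L)_\ell, e)$ with $\chi$, hence lies in the same $\ell$-block.

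The main obstacle is the transition from these idealised settings to the fully general case: when $\ell$ is small, when $C_{\bG^*}(s)$ is disconnected, or when $\chi$ is supported on a quasi-isolated Levi where the cuspidal structure is not conveniently $d$-split, Theorems~\ref{thm:BrLu} and~\ref{thm:BrLuLevi} do not directly apply, and the Brauer pair control must be engineered differently. The standard resolution, which is the very theme of Section~\ref{subsec:luboro}, is Bonnaf\'e--Rouquier reduction: one transfers the block theoretic problem from $\cE(G,s)$ to the smaller group $C_{\bG^*}(s)^F$, where $s$ becomes central and the preceding techniques apply cleanly.
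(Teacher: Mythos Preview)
The paper does not prove this theorem; it is quoted as a result of Hiss and Brou\'e--Michel and then used. So your proposal must stand on its own.

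There is a genuine logical gap. You fix one $\chi \in \Irr(B)$, read off the $\ell'$-part $s$ of its Lusztig label, and argue that $\chi$ shares its block with some member of $\cE(G,s)$. That gives \emph{existence} of such an $s$, but not \emph{uniqueness}: nothing in your argument prevents another $\chi' \in \Irr(B)$ from lying in $\cE(G,s't')$ with $s'$ a non-conjugate $\ell'$-element. What is actually required is that each $\cE_\ell(G,s)$ is a \emph{union of blocks}, and the mere fact that the $\cE_\ell(G,s)$ partition $\Irr(G)$ does not give this---a block could a priori straddle several pieces of the partition. Your opening paragraph silently conflates these two statements. Furthermore, your fallback to Bonnaf\'e--Rouquier is circular: the very formulation of Theorem~\ref{thm:BR} presupposes that $\cE_\ell(G,s)$ is a union of $\ell$-blocks, which is precisely what you are trying to establish; and Theorems~\ref{thm:BrLuLevi}--\ref{thm:BrLucond} only apply for $\ell\ge 7$, so even the part of your argument that does make sense leaves the small primes uncovered.

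The standard Brou\'e--Michel argument bypasses all of this by showing directly that the central idempotent $e_s := \sum_{\chi \in \cE_\ell(G,s)} e_\chi$ has all its coefficients in $\cO$, and is therefore a sum of block idempotents of $\cO G$. The integrality is obtained by expressing $e_s$ in terms of Deligne--Lusztig characters and using the explicit character formula for $\RTG(\theta)$; no hypothesis of the shape~$(*)$ or~$(**)$ is needed, and the argument works uniformly for all primes $\ell \ne p$.
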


For a semisimple $\ell'$-element $s$ of $G^*$, let $\cE_\ell(G, s)$ be the
union of Lusztig series $\cE(G,t)$ where $t$ runs over all semisimple elements
of $G^*$ whose $\ell'$-part is $G^*$-conjugate to $s$. The above theorem
implies that $\cE_\ell(G,s)$ is a union of $\ell$-blocks of $G$. Thus, the
$\ell$-block  distribution problem can be broken down as follows. For each
(conjugacy class of)   semi-simple  $\ell'$-element   $s$ of $G^* $ describe:
 \begin{enumerate}[\rm(I)]
  \item  The $\ell$-block distribution of $\cE(\bG,s)$.
  \item  For each non-trivial semisimple $\ell$-element $t$ in $C_{\bG^*}(s)$
   describe the $\ell$-block distribution of $\cE(\bG,st)$.
\end{enumerate}
This approach is compatible with Theorem~\ref{thm:BrLuLevi} since if $s$ is a
semisimple $\ell'$-element of $L^*$ for some $F$-stable Levi subgroup $\bL $ of
$\bG$, and $\la\in\cE(L,s)$, then all elements of $\cE(G|(\bL, \la))$ lie in
$\cE(G,s)$.

The following powerful theorem of Bonnaf\'e and Rouquier \cite{BR03} allows
for a dramatic shrinking of the magnitude of the problem.

\begin{thm}[Bonnaf\'e--Rouquier (2003)]   \label{thm:BR}
 Let $s\in G^*$ be semisimple such that $C_{\bG^*}(s)\le\bL^*$ for some
 $F$-stable Levi subgroup $\bL^*$ of $\bG^*$. Then the product of $\ell$-block
 algebras in $\cE_\ell(G,s)$ is Morita equivalent to the product of
 $\ell$-block algebras in $\cE(L,s)$.
\end{thm}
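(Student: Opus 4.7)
My approach would be to realise the equivalence explicitly via the $\ell$-adic cohomology of a Deligne--Lusztig variety, lifting Lusztig induction from the level of virtual characters up to the level of module categories. First I would fix a parabolic subgroup $\bP \le \bG$ with $F$-stable Levi complement $\bL$ and unipotent radical $\bU$, and form the Deligne--Lusztig variety
$$ \bY_\bU := \{ g\bU \in \bG/\bU \mid g^{-1}F(g) \in \bU \cdot F(\bU) \}, $$
which carries commuting actions of $G$ on the left and $L$ on the right. The complex $C := R\Gamma_c(\bY_\bU, \cO)$ is then a bounded complex of $(\cO G, \cO L)$-bimodules realising $\RLG$ at the virtual character level. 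Let $b_s^G \in Z(\cO G)$ and $b_s^L \in Z(\cO L)$ be the central idempotents cutting out the product of $\ell$-blocks meeting $\cE(G,s)$ and $\cE(L,s)$ respectively; they exist by Theorem~\ref{thm:broue-michel-hiss}, and the corresponding sums of blocks are precisely $\cE_\ell(G,s)$ and $\cE_\ell(L,s)$. The candidate Morita bimodule is
$$ M := b_s^G \cdot H^d_c(\bY_\bU, \cO) \cdot b_s^L,\qquad d := \dim \bY_\bU. $$

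The heart of the proof is a concentration statement: $b_s^G \cdot H^i_c(\bY_\bU, \cO) \cdot b_s^L = 0$ for all $i \ne d$. The hypothesis $C_{\bG^*}(s) \leq \bL^*$ enters here crucially, via a refinement of Lusztig's Jordan decomposition: under this hypothesis $\pm \RLG$ induces a sign-preserving bijection $\cE(L,s) \to \cE(G,s)$ preserving multiplicities, so that in particular $\langle \RLG(\lambda), \RLG(\lambda') \rangle_G = \langle \lambda, \lambda' \rangle_L$ for all $\lambda,\lambda' \in \cE(L,s)$. Combining this bijection with the disjointness of distinct Lusztig series (which ensures that characters outside $\cE_\ell(L,s)$ do not contribute after multiplication by $b_s^L$), one sees that the alternating sum forced by Lusztig's character formula admits no cancellations; a weight argument on the Frobenius eigenvalues acting on $H^*_c(\bY_\bU, \cO)$ then promotes this to individual vanishing of all non-top cohomology groups on the idempotent-truncated bimodule.

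Granting concentration, the Morita equivalence checks are comparatively formal. Bi-projectivity of $M$ follows because $C$ is quasi-isomorphic to a bounded complex of bi-projective bimodules, so concentration in a single degree after idempotent truncation forces $M$ to be a direct summand of a bi-projective bimodule. The natural map
$$ \cO L b_s^L \longrightarrow \operatorname{End}_{\cO G b_s^G}(M)^{\mathrm{op}} $$
is then an $\cO$-algebra map between free $\cO$-modules of the same rank (by the orthogonality relation above for characters in $\cE(L,s)$), and injectivity follows from the faithfulness of Lusztig induction; hence it is an isomorphism. The remaining condition, that $M \otimes_{\cO L b_s^L} -$ induces a bijection on simples, reduces in the usual way to the character bijection. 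The main obstacle is the concentration step: at the character level one only controls the alternating sum of the $H^i_c$, and ruling out cancellations between cohomology groups of different parities requires genuinely geometric input --- Deligne-style purity of $H^*_c(\bY_\bU)$, or equivalently Rouquier's construction of a suitable Rickard-style truncation of $C$ whose cohomology is concentrated by design. This is the technical core where the geometry of Deligne--Lusztig varieties is unavoidable, and it is what ties the theorem specifically to the non-defining characteristic setting.
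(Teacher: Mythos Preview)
The paper does not prove this theorem; it is quoted with attribution to Bonnaf\'e and Rouquier \cite{BR03} and then used as a tool, so there is no in-paper proof against which to compare your proposal.

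That said, your outline is a faithful sketch of the actual argument in \cite{BR03}: the Morita bimodule is indeed obtained from the $\ell$-adic cohomology of the variety $\bY_\bU$ cut by the relevant block idempotents, and the decisive step is concentration of $b_s^G\cdot H^*_c(\bY_\bU,\cO)\cdot b_s^L$ in a single degree. You are right that the hypothesis $C_{\bG^*}(s)\le\bL^*$ enters through Lusztig's bijection $\cE(L,s)\to\cE(G,s)$ via $\pm\RLG$, and that Frobenius eigenvalues on cohomology are what upgrade the alternating-sum information to degree-by-degree vanishing. One place where your sketch is slightly loose: cohomological concentration by itself does not force bi-projectivity of $M$. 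In \cite{BR03} the argument is carried out at the level of the complex: one replaces $b_s^G\,C\,b_s^L$ by a homotopy-equivalent complex of bi-projectives with no contractible summands and shows that \emph{this complex} is concentrated in a single term, which is a strictly stronger statement than concentration of cohomology and is what yields bi-projectivity directly. Your final paragraph acknowledges that this geometric input is the crux, which is accurate.
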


So, inductively, we only need to study the  blocks in Lusztig series
$\cE_\ell(G,s)$ such that $s$ is {\it quasi-isolated} in $\bG$, that is, such
that $C_{\bG^*}(s)$ is not contained in any proper Levi subgroup of
$\bG$. The blocks in non-quasi-isolated series can be "recovered" from
blocks of groups of Lie type where the underlying  algebraic group is of
smaller dimension than that of $\bG$. There is a price to be paid here: since
Levi subgroups of a simple algebraic group are not simple, while working in
the inductive set-up we cannot restrict ourselves to only considering simple
algebraic groups. We need to take into account all Levi subgroups as well.

Recently, Bonnaf\'e, Dat and Rouquier \cite{BDR} have given an improvement of
Theorem~\ref{thm:BR} which in most situations reduces the set of semisimple
elements that need to be considered even further, namely to \emph{isolated}
elements. These are elements $s$ such that the connected component of
$C_{\bG^*}(s)$ is not contained in a proper Levi subgroup of $\bG$.

\subsection{Unipotent blocks  and $d$-Harish-Chandra  theory.}
The best understood class of blocks are the \emph{unipotent} blocks. These are
the blocks in $\cE_\ell(G,1)$. By Theorem~\ref{thm:broue-michel-hiss}, the
unipotent blocks are precisely the blocks which contain a unipotent character.
In \cite{BMM}, Brou\'e, Malle and Michel generalised the Harish-Chandra
theory of Howlett and Lehrer to the context of $d$-split Levi subgroups (see
the discussion before Theorem~\ref{thm:cusp}). This $d$-Harish-Chandra theory
is an important ingredient in the solution of the block distribution problem.

For $d$ a positive integer, let $\cU_d(G)$ denote the set of all pairs
$(\bL,\la)$ such that $\bL$ is a $d$-split Levi subgroup of $\bG$ and $\la$
is an irreducible unipotent character of $L$. We regard $\bG$ as a $d$-split
Levi subgroup of itself, so $(\bG,\chi)\in\cU_d(G)$ for any irreducible
unipotent character $\chi$ of $G$. The set $\cU_d(G)$ is a $G$-set via
$$\,^g(\bL,\la)=(\,^g\bL,\,^g\la),\quad
  \text{for $g\in G,(\bL,\la)\in \cU_d (G)$}. $$
There is  also  an inclusion relation  on $\cU_d(G)$  which is defined as follows: 
$$(\bL,\la)\leq(\bM,\mu)\quad\text{if $\bL\leq\bM$ and $\la$ is a constituent
  of $\RLM (\mu)  $}. $$
The pair $(\bM, \mu )$ is  said to be a \emph{unipotent $d$-cuspidal pair of
$G$} if there does not exist a  unipotent  $d$-cuspidal  pair  $(\bL,\la) \leq(\bM, \mu)$ with $\bL$
proper in $\bM$. 

For $(\bL,\la)\in\cU_d( G)$ and $\bM$ an $F$-stable Levi subgroup of $\bG$
containing $\bL$, we denote by $N_{M}(\bL, \la) $ the stabiliser in $M$ of
the pair $(\bL,\la)$ and we denote by $W_M(\bL,\la)$ the group
$N_M(\bL,\la)/L$, the \emph{relative Weyl group of $(\bL, \la)$}.

\begin{thm}[Brou\'e--Malle--Michel (1993)]   \label{thm:bmm}  Let $d$ be a positive integer.
 \begin{enumerate}[\rm(a)]
  \item  $(\cU_d(G),\leq)$ is a $G$-poset. The connected components of
   $(\cU_d(G),\leq)$ are precisely the sets $\cE(G|(\bL,\la))$ as $(\bL,\la)$
   runs  over  a  set of representatives  of  $G$-conjugacy classes of
   $d$-cuspidal  pairs of  $G$.
  \item Let $(\bL,\la)$ be  a $d$-cuspidal pair of $G$  and let $ \bM $ be a
   $d$-split Levi subgroup of $\bG$ containing $\bL$. There exists an
   isometry
   $$\ZZ \cE(M|(\bL,\la))\cong\ZZ\Irr(W_{\bM} (\bL,\la))$$
   intertwining  $\RMG $ with $\Ind_{W_M(\bL,\la)}^{W_G(\bL,\la)}$.
 \end{enumerate}
\end{thm}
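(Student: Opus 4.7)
The plan is to handle parts~(a) and~(b) separately. Both rest on formal properties of Lusztig induction, but (b) requires a more delicate analysis of the endomorphism algebra of $\RLG(\la)$.

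For part~(a), I would begin by checking the poset axioms. Reflexivity is immediate: $\la$ is the unique constituent of $R_\bL^\bL(\la)=\la$. Antisymmetry is built in, since $(\bL,\la)\le(\bM,\mu)$ and $(\bM,\mu)\le(\bL,\la)$ force $\bL=\bM$ as closed subgroups of $\bG$, whence $\la=\mu$. The substantive point is transitivity, which reduces to the identity $R_{\bL_1}^{\bL_3}=R_{\bL_2}^{\bL_3}\circ R_{\bL_1}^{\bL_2}$ for any chain $\bL_1\le\bL_2\le\bL_3$ of $d$-split Levi subgroups of $\bG$. This identity, together with the (non-trivial) fact that Lusztig induction between $d$-split Levi subgroups is independent of the choice of ambient parabolic, is available in \cite{BMM} for unipotent characters; given this, if $\la_1$ is a constituent of $R_{\bL_1}^{\bL_2}(\la_2)$ and $\la_2$ of $R_{\bL_2}^{\bL_3}(\la_3)$, then $\la_1$ is a constituent of $R_{\bL_1}^{\bL_3}(\la_3)$. $G$-equivariance is clear. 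For the description of connected components I would show (i)~every $(\bM,\mu)\in\cU_d(G)$ admits a $d$-cuspidal pair below it, by descending induction on $\dim\bM$, and (ii)~two $d$-cuspidal pairs in the same component are $G$-conjugate; the latter is read off from the structural statement of (b), because (b) forces the connected component of $(\bL,\la)$ to coincide with $\cE(G|(\bL,\la))$, and distinct $G$-orbits of $d$-cuspidal pairs therefore give disjoint components.

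For part~(b), the target is an isometry $f\colon\ZZ\cE(M|(\bL,\la))\to\ZZ\Irr(W_M(\bL,\la))$ intertwining $\RMG$ with $\Ind_{W_M(\bL,\la)}^{W_G(\bL,\la)}$. The strategy has three steps. Step~1: compute the endomorphism algebra $\End_{\CC M}(\RLM(\la))$ and identify it with the group algebra $\CC W_M(\bL,\la)$. This yields a canonical bijection $\cE(M|(\bL,\la))\leftrightarrow\Irr(W_M(\bL,\la))$ preserving multiplicities in $\RLM(\la)$, and hence the isometry $f$. Step~2: invoke the Mackey formula for Lusztig induction (available here because we work with $d$-split Levi subgroups and unipotent characters) to express $\langle\RMG(\chi_1),\RMG(\chi_2)\rangle$ in terms of double cosets in $W_M(\bL,\la)\backslash W_G(\bL,\la)/W_M(\bL,\la)$. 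Step~3: compare this with the Frobenius reciprocity expansion of $\langle\Ind f(\chi_1),\Ind f(\chi_2)\rangle$; the two coincide term by term, yielding the intertwining property.

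The main obstacle is Step~1, the identification of $\End_{\CC M}(\RLM(\la))$ with $\CC W_M(\bL,\la)$. In the classical Harish-Chandra setting ($d=1$) this is the Howlett--Lehrer theorem, whose proof exploits the $BN$-pair structure to produce a Hecke algebra that specialises at $q=1$ to $\CC W_M(\bL,\la)$ via Tits' deformation theorem. For general $d$ no such parabolic machinery is available, so the proof must combine a uniform ingredient---using adjunction, transitivity, and the Mackey formula to reduce the computation of $\langle\RLM(\la),\RLM(\la)\rangle$ to the order of $W_M(\bL,\la)$---with a case-by-case verification. For $\bG$ of classical type the combinatorics of symbols and $d$-cores furnishes the required decomposition; for $\bG$ of exceptional type one exploits the known tables of unipotent characters, generic degrees, and relative Weyl groups, which turn out to be complex reflection groups. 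Once Step~1 is in place, Steps~2 and~3 are essentially formal, and the two parts of the theorem fit together: the isometry globally encodes $\cE(G|(\bL,\la))$ as the representations of $W_G(\bL,\la)$.
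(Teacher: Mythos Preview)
Your treatment of transitivity in part~(a) contains a genuine gap. You deduce from the identity $R_{\bL_1}^{\bL_3}=R_{\bL_2}^{\bL_3}\circ R_{\bL_1}^{\bL_2}$ that if $\la_1$ is a constituent of the virtual character indexed by $\la_2$ and $\la_2$ of that indexed by $\la_3$, then $\la_1$ is a constituent of the composite. But Lusztig induction sends characters to \emph{virtual} characters, so the contribution of $\la_1$ arising via $\la_2$ may be cancelled by contributions from other constituents of $R_{\bL_2}^{\bL_3}(\la_3)$ that occur with opposite sign. Transitivity of the functor does not imply transitivity of the relation ``occurs as a constituent''. The paper explicitly singles out transitivity of $\le$ as the delicate point for exactly this reason, and the actual proof in \cite{BMM} does not resolve it formally: it is obtained case by case, via the combinatorics of symbols and $d$-cores for classical types and explicit tables for exceptional types. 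In effect the case analysis that you rightly identify as necessary for Step~1 of part~(b) is already required for the poset axioms in~(a).

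A related difficulty infects your Step~1 itself. For $d>1$, $\RLM(\la)$ is only a virtual character, so there is no actual $\CC M$-module whose endomorphism ring you can compute, and the Howlett--Lehrer\,/\,Tits deformation mechanism has no direct analogue. Knowing that $\langle\RLM(\la),\RLM(\la)\rangle=|W_M(\bL,\la)|$ (which does follow from a Mackey-type argument) does not by itself produce the isometry: one must also pin down the signed multiplicities of the individual constituents and match them with character degrees of $W_M(\bL,\la)$. In \cite{BMM} this is again done type by type rather than through a uniform endomorphism-algebra argument. So while your overall architecture is reasonable, the two places where you claim a formal reduction---transitivity, and the identification of the ``endomorphism algebra''---are precisely where the paper resorts to explicit case-by-case computation.
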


The proof of Theorem~\ref{thm:bmm} is on a case by case basis and relies
heavily on
the combinatorics associated to unipotent characters. An especially delicate
point is the transitivity of $\leq$ since Lusztig induction sends characters
to virtual characters. A by-product of part~(b) of the theorem is an explicit
description of $d$-cuspidal pairs $(\bL,\la)$ and the sets $\cE(\bG|(\bL,\la))$.
For classical  groups, this  description is in terms of the combinatorial
``yoga" associated to partitions and  symbols labelling unipotent characters and is 
in terms of tables for exceptional groups.

The set $\cE( G|(\bL,\la))$ for a given $d$-cuspidal pair $(\bL,\la)$ is
called the \emph{ $d$-Harish-Chandra series above $(\bL, \la)$}.
Thus, for any $d\ge1$, Theorem~\ref{thm:bmm} provides a partition  of the set
of unipotent characters   into $d$-Harish Chandra  series.
It turns out that when $ d$  is the order of $q$ modulo $\ell$  and provided that   $\ell$ is sufficiently large  the  partition   into  $d$-Harish-Chandra series coincides  with  the   block partition of $\cE(\bG,  1) $, and is also closely linked with the  $\ell$-block partition of  $\mathcal{P}(G)$.
The following theorem, which  makes this  more precise, was  proved  by   Cabanes and Enguehard \cite{CE94}.  For   very large $\ell$  it is due to  Brou\'e, Malle and Michel \cite{BMM}. The first assertion  is  covered  by  Theorem ~\ref{thm:BrLuLevi}  and Theorem~\ref{thm:BrLucond}.

\begin{thm}[Brou\'e--Malle--Michel (1993), Cabanes--Enguehard (1994)]  \label{thm:blocks}
 Suppose  that  $\ell \geq 7 $ and let $ d =d_{\ell}(q) $.
\begin{enumerate} [\rm (a) ]\item   For any   unipotent   $d$-cuspidal  pair $ (\bL, \la )  $ of $\bG$ there exists a  unique $\ell$-block   $B_{G} (\bL, \la) $   of $G$ containing   all elements   of   $\cE  ( G |  (\bL, \la ) )$.
\item  The map  $ (\bL, \la ) \mapsto  B_{G} (\bL, \la) $  induces a bijection between the $G$-classes of     unipotent  $d$-cuspidal  pairs  and  the set of unipotent blocks   of $G$.
\item   $  \Irr(B_G(\bL, \la)  )\cap \cE (\bG, 1) =\cE (G | (\bL, \la)   $  for all  unipotent   $d$-cuspidal  pairs $ (\bL, \la )  $ of $\bG$.
\item  Suppose that $[\bG, \bG]$ is simply connected.  The map $(\bM, \mu) \mapsto  (Z(M)_\ell, e (\mu) )$  is an order reversing  isomorphism from $(\cU_d (G),  \leq   ) $ onto a subset  of $(\mathcal{P} ( G), \leq ) $ where $e(\mu)$   denotes the block idempotent of $kC_G(Z(M)_{\ell} )$ associated to $\mu$.
\item  There exists  a maximal  $B_G(\bL, \la) $-Brauer pair  $ (D, e)$ such that  
 \begin{itemize}
  \item $(Z(\bL)_\ell, e(\la)  ) \unlhd (D, e)$;
  \item $C_D(Z(\bL)_\ell)  \leq  Z(L)_\ell $; and
  \item $D/Z(L)_\ell $ is isomorphic to a subgroup of $W_G(\bL, \la)$.
 \end{itemize}
\end{enumerate} 
\end{thm}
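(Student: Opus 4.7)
The plan is to develop the five parts in a single chain, with part~(a) being direct, parts~(d), (b), (c) intertwined, and part~(e) at the end.

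Part~(a) falls out of the hypotheses: a unipotent $d$-cuspidal pair $(\bL,\la)$ has $\bL$ a $d$-split Levi so that Theorem~\ref{thm:BrLucond} grants condition~(**), and $\la\in\cE(L,1)\subseteq\cE(L,\ell')$. Theorem~\ref{thm:BrLuLevi} then delivers the unique $\ell$-block $B_G(\bL,\la)$ containing $\cE(G|(\bL,\la))$, and simultaneously produces the Brauer pair $(Z(L)_\ell,e(\la))$ needed to launch part~(d).

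For part~(d), I would first show that the assignment $(\bM,\mu)\mapsto(Z(M)_\ell,e(\mu))$ is order-reversing. Suppose $(\bL,\la)\le(\bM,\mu)$ in $\cU_d(G)$, so $\bL\le\bM$ and (reading the poset relation via Frobenius reciprocity) $\mu\in\cE(M|(\bL,\la))$. The inclusion $\bL\le\bM$ gives $Z(M)_\ell\le Z(L)_\ell$. Applying Theorem~\ref{thm:BrLuLevi} inside $M$ to $(\bL,\la)$ yields that $(Z(L)_\ell,e(\la))$ is a Brauer pair of the $M$-block of $\mu$, i.e.\ $\Br_{Z(L)_\ell}(e(\mu))\cdot e(\la)\ne0$, which is precisely the Brauer-pair inclusion $(Z(M)_\ell,e(\mu))\le(Z(L)_\ell,e(\la))$ in $\cP(G)$. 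Injectivity of the map on $G$-classes follows since $\bL=C_\bG(Z(L)_\ell)$ (intrinsic to $d$-split Levis), while $d$-cuspidality of $\la$ ensures that $\la$ is the unique unipotent character in the block of $L$ with idempotent $e(\la)$, so $(\bL,\la)$ is recovered from $(Z(L)_\ell,e(\la))$.

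With (d) in hand, parts (b) and (c) are immediate. Surjectivity in (b): a unipotent block $B$ contains some unipotent $\chi$, and by Theorem~\ref{thm:bmm}(a), $\chi\in\cE(G|(\bL,\la))$ for a unique $G$-class of $d$-cuspidal pairs, whence $B=B_G(\bL,\la)$ by~(a). Injectivity: if $B_G(\bL,\la)=B_G(\bL',\la')$, then $(Z(L)_\ell,e(\la))$ and $(Z(L')_\ell,e(\la'))$ both lie in $\cP(B_G(\bL,\la))$, and the injectivity from (d) then forces $G$-conjugacy of the two pairs. For (c), any unipotent $\chi\in\Irr(B_G(\bL,\la))$ belongs to some $\cE(G|(\bL'',\la''))$ by Theorem~\ref{thm:bmm}(a), and (a) gives $B_G(\bL'',\la'')=B_G(\bL,\la)$, so (b) forces $(\bL'',\la'')\sim_G(\bL,\la)$ and hence $\chi\in\cE(G|(\bL,\la))$. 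For part~(e), I would pick a maximal $B_G(\bL,\la)$-Brauer pair $(D,e)$ containing $(Z(L)_\ell,e(\la))$ (possible by Theorem~\ref{thm:AlpBro}). The relative Weyl group $W_G(\bL,\la)=N_G(\bL,\la)/L$ controls $N_G(Z(L)_\ell,e(\la))/LC_G(Z(L)_\ell)$, and Brauer's first main theorem applied in the Brauer correspondent inside $N_G(D)$ yields the embedding $D/Z(L)_\ell\hookrightarrow W_G(\bL,\la)$. The condition $C_D(Z(L)_\ell)\le Z(L)_\ell$ follows from Theorem~\ref{thm:recog} applied inside $L=C_G(Z(L)_\ell)$, using that $d$-cuspidality makes $\la$ of defect zero modulo $Z(L)_\ell$ in the relevant quotient. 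The main obstacle, I expect, is this last defect-group analysis: verifying $D/Z(L)_\ell\hookrightarrow W_G(\bL,\la)$ is where the bad-prime exclusion $\ell\ge 7$ and the type-by-type Cabanes--Enguehard analysis really bite, and the extraction of $\la$ from $e(\la)$ in step~(d) rests likewise on the full polynomial-order machinery of $d$-Harish-Chandra theory rather than on any soft argument.
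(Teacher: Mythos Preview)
The paper does not prove this theorem beyond part~(a): it simply records that~(a) is covered by Theorems~\ref{thm:BrLuLevi} and~\ref{thm:BrLucond} and cites \cite{BMM,CE94} for the rest. Your derivation of~(a) matches exactly, and your further sketch is a reasonable outline of the Cabanes--Enguehard strategy, with the right identification of the key non-obvious input (that a $d$-cuspidal unipotent $\la$ lies in a block of $L$ of central $\ell$-defect, so that $Z(L)_\ell$ is the full defect group of $e(\la)$ in $L$).

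There is, however, a genuine gap in your deduction of injectivity in~(b). Knowing that $(Z(L)_\ell,e(\la))$ and $(Z(L')_\ell,e(\la'))$ both lie in $\cP(B)$, together with set-theoretic injectivity of the map from~(d), does \emph{not} force $G$-conjugacy of $(\bL,\la)$ and $(\bL',\la')$: two distinct connected components of $\cU_d(G)$ could in principle land inside the same connected component of $\cP(G)$. What is actually needed is that, after conjugating both Brauer pairs under a common maximal pair $(D,e)$ (via Theorem~\ref{thm:AlpBro}), the self-centralising property $C_D(Z(L)_\ell)=Z(L)_\ell$ from~(e) pins down $Z(L)_\ell$ uniquely inside $D$ (as the unique maximal normal abelian subgroup, say), and then $e(\la)$ is determined by uniqueness of Brauer-pair inclusion. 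This is exactly where the structural analysis in \cite{CE94} is required, and your sketch skips it. Relatedly, your injectivity argument for the map in~(d) only treats $d$-cuspidal pairs; for a general $(\bM,\mu)\in\cU_d(G)$ the block $e(\mu)$ of $kM$ may well contain several unipotent characters, so recovering $\mu$ from $e(\mu)$ needs the full poset structure (via the order-reversal and an inductive comparison with the $d$-series of $M$), not a one-line remark.
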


Theorem~\ref{thm:blocks}   provides  a complete solution to  the  block distribution  problem   for unipotent characters. In  other words, it completes    Part  (I)  of the programme  outlined in Section~\ref{subsec:luboro} for $s=1$.  In fact,    Cabanes and Enguehard also give  a solution to  Part  (II). We describe this briefly.  For simplicity, we assume that    $Z(\bG)  $ is connected.  Let  $  r $ be an $\ell $-element   of  $G^*$. The  assumption   that  $\ell \geq 7 $   implies    that    the  centraliser  of $r$  in $\bG^*$ is a Levi subgroup  of $\bG^*$, necessarily $F$-stable.  Duality    between $\bG$ and $\bG^*$  yields a corresponding  $F$-stable Levi subgroup   $\mathbf {C} (r)  \leq  \bG$  and  a  linear character $\hat r$  of $C(r)$.   By Lusztig's parametrisation of characters,   the elements of $\cE(\bG, r)$,  are  precisely  the characters    
$$\epsilon R_{\mathbf{C}(r)}^{\bG}  (\hat r \otimes  \eta), \qquad  \eta   \in \cE (C(r), 1),$$   for some  $\epsilon \in \{ \pm 1 \}$.
Cabanes and Enguehard   show that  to   each    unipotent $d$-cuspidal pair  $ (\bL', \la')$  of $C(r)$   is associated  a   unipotent $d$-cuspidal pair $(\bL,  \la)$ of $G$  such that 
$$[\bL, \bL] = [\bL', \bL'] \qquad  \text{and} \qquad \text{Res}^{\bL}_{[\bL, \bL]^F} \la= \text{Res}^{\bL'}_{[\bL', \bL']^F} \la. $$ Then  $ \epsilon R_{\mathbf{C}(r)}^{\bG}  (\hat r \otimes  \eta) $ belongs  to the block $B_{(\bL, \la)} $ if and only if $\eta  $ lies in the   $d$-Chandra series  of $C(r)$  above   a   $d$-cuspidal pair $(\bL',\la')$   associated to  $(\bL, \la)$.

The condition  $\ell \geq 7 $  in the  above  theorem  can be replaced  by the  weaker condition:  $\ell $ is odd,  good for $\bG$  and  $\ell \geq 5 $    if  $\bG $  has  a  simple component of type $D_4 $ which contributes the  triality  group   $\,^3D_4 (q) $  to $\bG^F$.
In  \cite{E00}  Enguehard   treated     the  unipotent  $\ell$-blocks   for  the  remaining primes.  One obtains   a slightly weaker analogue  of  Theorem~\ref{thm:blocks}.  The main difference is that    the  assignment in  part (b) of the theorem, while still onto,  is no longer one-to-one. In order to  obtain a bijection   one replaces  the set of unipotent $d$-cuspidal pairs    with  a slightly smaller set, namely   the set  of  unipotent $d$-cuspidal pairs  with central  $\ell$-defect.  Enguehard  also  does Part (II)  of the problem  but  only in the case that  the center of $\bG$  is connected.    For    disconnected center groups   the problem is  still open.

\subsection{General Blocks.} 
There has also been a lot of work done to generalise the results of the
previous section to non-unipotent blocks. There are two inter-connected
approaches to this generalisation: (i)  develop  a   non-unipotent $d$-Harish Chandra  theory  (ii) use  Jordan decomposition    to  carry over  unipotent $d$-Harish Chandra theory  to the non-unipotent case.

In  \cite{CE99},    using  a  hybrid  of the  two approaches,   Cabanes and Enguehard  proved an analogue  of  Parts (a)   and (b)  of Theorem~\ref{thm:blocks} for  non-unipotent blocks  as well as a  weak analogue of (c) provided  $\ell $ is odd,  good for $\bG$  and  $\ell \geq 5 $    if  $\bG $  has  a  simple component of type $D_4 $.  They    also  describe defect  group structure.  In  \cite{E08}, Enguehard  showed that    provided  that  $\ell \geq 7$  and the center of $\bG$  is connected, then  block  distribution  of irreducible characters is highly  compatible with   Jordan decomposition.  In particular,  for any semisimple $\ell'$-element  $s$ of  $G^*$,  there is a bijection   $  B \mapsto B_s $    between the blocks of  $\bG$  in $\cE_{\ell} (\bG, s)  $     and the unipotent  blocks of $C_{\bG^*}(s)$   such that  there is a height preserving  bijection between the irreducible characters  of $B$ and those of $B_s$   and   such that  $B$ and $B_s$ have isomorphic
   defect groups (in fact  $B$ and $B_s$  have isomorphic  Brauer categories). In the same paper, Enguehard  also  describes the blocks for classical groups   when $\ell=2 $.  
The paper  \cite{KM13} described  the  block distribution  of $\cE_{\ell} (G, s)$      for $\bG$ simple of exceptional type  and $\ell $  a bad prime.  Combining  all of the  previous   results   with the theorem of Bonnaf\'e and Rouquier,    a uniform  parametrisation  of       blocks  for  all $G$ such that $\bG$ is simple    was given in  \cite{KM15}.

\section{On the other conjectures; open problems}
To end this survey, let us briefly comment on the status of the inductive
conditions for the other local-global conjectures beyond the McKay
conjecture and on some related open problems.

The work on block parametrisation described in these sections has been enough
to verify Brauer's height zero conjecture for quasi-simple groups \cite{KM13}, \cite{KM15}.
For the remaining conjectures, we will need to do much more. For the immediate
future, the two main open problems which need to be resolved for blocks of
finite quasi-simple groups of Lie type are:

\begin{prob}
 Complete the description of non $\ell'$-characters in $\ell$-blocks of
 finite groups of Lie type for small (bad) primes $\ell$.
\end{prob}

\begin{prob}
 Describe the relationship between the $\ell$-block distribution of $\cP(G)$
 and Theorem~\ref{thm:blocks}.
\end{prob}

For the Alperin--McKay Conjecture~\ref{conj:AM} we still do not have
control over the global nor over the local situation in general.
The hope is that we can prove a reduction of the necessary conditions to
so-called quasi-isolated blocks, in the spirit of the Bonnaf\'e--Rouquier
Theorem~\ref{thm:BR}: While this result gives some kind of reduction for the
global situation, we are still missing an analogous local result.

For the Alperin weight conjecture, the following cases have been dealt with so
far, see \cite{Ma14} and \cite{Sch15}:

\begin{thm}[Malle (2014), Schulte (2016)]
 The groups $\fA_n$, $\tw2B_2(q^2)$, $^2G_2(q^2)$, $\tw2F_4(q^2)$,
 $\tw3D_4(q)$ and $G_2(q)$ are AWC good for all primes $\ell$.
\end{thm}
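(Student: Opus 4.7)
The plan is to verify the inductive Alperin weight condition group-by-group, following the structure of the reductions that have been spelled out by Sp\"ath. In each case this requires (a) an explicit parametrisation of the irreducible $\ell$-Brauer characters in each $\ell$-block $B$ of a covering group $G$, (b) an explicit parametrisation of the $G$-conjugacy classes of weights attached to $B$, (c) a bijection between (a) and (b) that is equivariant under the stabiliser in $\Aut(G)$ of $B$, and (d) an extendibility/Clifford-theoretic compatibility statement in $\Aut(G)$. So the global problem splits cleanly along blocks, and for each block one must describe both sides and then match them.

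For $\fA_n$, I would start from the Alperin--Fong parametrisation of weights in $\fS_n$ and the Jantzen--Seitz / James parametrisation of $\IBr(\fS_n)$ (together with the Nakayama classification described in Example~\ref{exa:Sn-local}). The key observation is that since $\Out(\fS_n)$ is trivial for $n\ne 6$, the $\fS_n$-bijection is automatically equivariant, and one then descends it to $\fA_n$ via Clifford theory for the index-$2$ subgroup, tracking the behaviour of self-conjugate $p$-cores and of the Sylow-normaliser structure $(C_2\wr C_2\wr\cdots)$. The extendibility condition in $\Aut(\fA_n)=\fS_n$ (for $n\ne 6$) reduces to showing that characters on each side either have stabiliser $\fA_n$, in which case extension to $\fS_n$ is trivial, or are $\fS_n$-invariant and extend; one needs the same statement for covering groups $2.\fA_n$ and the two exceptional $3.\fA_n$ with $n=6,7$, which can be done by inspection from ATLAS data.

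For the exceptional families $\tw2B_2(q^2)$, $\tw2G_2(q^2)$, $\tw2F_4(q^2)$, $\tw3D_4(q)$ and $G_2(q)$, the strategy is to treat the defining characteristic and non-defining characteristic separately. In defining characteristic, Sp\"ath's theorem already settles AWC goodness. For $\ell\ne p$, the arguments of Theorem~\ref{thm:blocks} (together with the extension \cite{E00,KM13}, which covers bad primes for exceptional types) give a complete $d$-Harish-Chandra description of the $\ell$-blocks, their defect groups and the characters in them; in these small-rank cases $d=d_\ell(q)$ takes only very few values, and for each $d$-cuspidal pair $(\bL,\lambda)$ the relative Weyl group $W_G(\bL,\lambda)$ is explicitly known and small. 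Weights attached to $B_G(\bL,\lambda)$ are then controlled by Brauer pairs $(D,e)$ inside $N_G(D)$, whose structure is described in Theorem~\ref{thm:blocks}(e). I would match unipotent $\IBr$ with weights $(Q,\psi)$ by identifying both with irreducible characters of $W_G(\bL,\lambda)$ modulo an appropriate quotient, using known decomposition matrices (which are fully computed for these families); for non-unipotent blocks I would use Jordan decomposition, combined with Bonnaf\'e--Rouquier Morita equivalences (Theorem~\ref{thm:BR}) to reduce to a unipotent block of a proper Levi, and then invoke the unipotent case inductively.

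The hardest step is invariably point~(c)+(d): equivariance under $\Aut(G)$ together with the extendibility condition on normalisers of radical subgroups. For the listed groups this is tractable because $\Out(G)$ is cyclic (generated by field automorphisms, with one graph automorphism for $\tw2F_4(q^2)$ and $\tw3D_4(q)$), so by a Cabanes--Sp\"ath-type argument (cf.\ Theorem~\ref{thm:CS13}) the main task reduces to exhibiting natural $\Aut(G)$-stable parametrisations on both sides and checking that the Clifford obstruction vanishes. The action of field and graph automorphisms on unipotent characters and on $d$-cuspidal pairs is known in these cases, and the action on the local side can be read off from the concrete description of $N_G(D)$; extendibility follows because the relevant stabilisers in $\Aut(G)$ are cyclic, so any character invariant under them automatically extends. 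Assembling these pieces across all $\ell$ and all blocks yields AWC goodness for each of the listed families.
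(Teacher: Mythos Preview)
The paper does not give a proof of this theorem; it is a survey statement with references to \cite{Ma14} and \cite{Sch15}, and the only methodological hint is the sentence that follows: ``The proof requires the determination of all weights of all radical subgroups.'' So your proposal cannot really be compared to a proof in the paper, only to this hint.

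That said, your outline diverges from what the cited papers actually do. Malle and Schulte proceed by a direct classification of radical $\ell$-subgroups $Q$ of $G$, then for each such $Q$ compute $N_G(Q)/Q$ and its defect-zero characters explicitly, and match the resulting list of weights against the known (or computed) $\IBr(B)$ block by block. Your route via Theorem~\ref{thm:blocks}(e) and Bonnaf\'e--Rouquier is more structural, but it has a genuine gap: the Morita equivalence of Theorem~\ref{thm:BR} does not come with an $\Aut(G)$-equivariance statement, so you cannot transport the inductive AWC condition along it without further work (this is exactly the kind of compatibility that \cite{BDR} was written to address, and even then only partially). In the cited proofs this problem is avoided because one stays inside $G$ throughout and never reduces to a Levi.

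Two smaller points. First, neither $\tw2F_4(q^2)$ nor $\tw3D_4(q)$ has a graph automorphism; their outer automorphism groups are cyclic and consist of field automorphisms only (the graph automorphism is absorbed into the twisting). Your conclusion that $\Out(G)$ is cyclic is correct, but the reason you give is not. Second, your appeal to ``known decomposition matrices'' is broadly right for these small-rank families, but you should be aware that for $\tw2F_4(q^2)$ and $\tw3D_4(q)$ at certain primes the relevant matrices were only established as part of the very papers you would be citing; so you are not sidestepping the computation, just quoting it.
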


The proof requires the determination of all weights of all radical subgroups;
for exceptional groups of larger rank that seems quite challenging at the
moment. For classical types, it might be possible to use results of
An \cite{An94}.

Here we hope for
\begin{enumerate}
 \item a generic description of weights in terms of $d$-tori and their
  normalisers
 \item a Bonnaf\'e--Rouquier type reduction to a few special situations.
\end{enumerate}

Another ingredient might be the following:

\begin{prob}   \label{prob:decmat}
 Show that the $\ell$-modular decomposition matrices of blocks of
 quasi-simple groups of Lie type are unitriangular.
\end{prob}

This statement might follow, at least in good characteristic, from properties of
generalised Gelfand--Graev characters. If this were true, one could make use
of the result of Koshitani--Sp\"ath \cite{KS15a} mentioned before. The
unitriangularity will be with respect to a suitable subset of $\Irr(B)$:
A linearly independent subset $X\subseteq\Irr(B)$ is called a \emph{basic set
for $B$} if every Brauer character $\vhi\in\IBr(B)$ is an integral linear
combination of the elements of $X$. So in particular $|X|=|\IBr(B)|$.
The following is folklore:

\begin{conj}   \label{conj:basic set}
 Any $\ell$-block of a quasi-simple group of Lie type has a ``natural''
 basic set.
\end{conj}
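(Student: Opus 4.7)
The plan is to propose, for each block $B$, a candidate basic set built from Lusztig's parametrisation and then verify the two defining properties (right cardinality and $\ZZ$-spanning in the Grothendieck group modulo $\ell$) using the machinery developed in the survey. By Theorem~\ref{thm:broue-michel-hiss} every $\ell$-block $B$ of $G=\bG^F$ corresponds to a $G^*$-conjugacy class of semisimple $\ell'$-elements $s\in G^*$, and we have the union of blocks $\cE_\ell(G,s)$. The natural candidate for a basic set of $B$ is
$$X_B:=\cE(G,s)\cap\Irr(B).$$
The first thing I would verify is that $|X_B|=|\IBr(B)|$; summing over $B\subseteq\cE_\ell(G,s)$ this reduces to checking $|\cE(G,s)|=|\IBr(\cE_\ell(G,s))|$, which in turn should follow (after Jordan decomposition) from the unipotent case for $C_{\bG^*}(s)$ and the known count $|\cE(C_{G^*}(s),1)|$.

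Next I would reduce to isolated (or quasi-isolated) series by invoking the Bonnaf\'e--Rouquier Theorem~\ref{thm:BR} (and its refinement by Bonnaf\'e--Dat--Rouquier): when $C_{\bG^*}(s)\le\bL^*$ with $\bL^*$ a proper $F$-stable Levi, the Morita equivalence between the block algebras in $\cE_\ell(G,s)$ and those in $\cE(L,s)$ transports any basic set of the $\bL$-side to a basic set of the $\bG$-side, and it does so through Lusztig induction $\RLG$, so the candidate $X_B$ on the $\bG$-side is the image of the candidate on the $\bL$-side. By induction on the semisimple rank of $\bG$ this leaves only the quasi-isolated (or isolated) case to handle, where $\cE(G,s)$ lives genuinely inside $\Irr(G)$.

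For the core step, I would treat unipotent blocks first ($s=1$), where $X_B$ consists of unipotent characters in $B$. In good characteristic, a theorem of Geck--Hiss gives that $\cE(G,1)$ is a basic set of $\cE_\ell(G,1)$ via the triangularity of the decomposition matrix with respect to generalised Gelfand--Graev characters; I would combine this with Theorem~\ref{thm:blocks}(c) (which identifies $\Irr(B)\cap\cE(G,1)$ with a single $d$-Harish-Chandra series) to deduce that $X_B$ is a basic set blockwise. Passing from $s=1$ to arbitrary $\ell'$-semisimple $s$ would use Jordan decomposition compatibility with block distributions (Enguehard \cite{E08}) together with the fact that $\RLG(\hat s\otimes-)$, applied to a basic set of the unipotent part of $C(s)$, yields a basic set of $\cE_\ell(G,s)$; this is essentially the argument used by Geck--Hiss for ordinary $G$ and would need to be made blockwise.

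The main obstacle, as signalled just before the conjecture, is the bad-prime/quasi-isolated case for exceptional groups, and more generally the unitriangularity of decomposition matrices of Problem~\ref{prob:decmat}. In good characteristic the theory of generalised Gelfand--Graev representations provides projectives whose characters pair unitriangularly with $\cE(G,s)$; in bad characteristic these tools partially fail, so one would have to supplement them case by case, in the style of \cite{KM13}, for the finitely many isolated series in exceptional types, and to address centre issues (disconnected centre, non-simply-connected derived group) where the partition of $\cE(G,s)$ into blocks itself is not yet fully known. A further delicate point is the independence of the ``natural'' choice $X_B$ from the regular embedding used, and its behaviour under $\Aut(G)$, both of which would be needed if one hopes to feed the result into the reduction theorems discussed earlier.
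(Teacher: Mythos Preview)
The statement you are addressing is a \emph{conjecture}, not a theorem: the paper does not prove it and explicitly introduces it as ``folklore'' that is open in general. Immediately after stating it, the paper records what is known (the Geck--Hiss theorem that $\cE(G,s)$ is a basic set when $\ell$ is good for $\bG$ and does not divide $|Z(G)|$) and where it fails (bad primes, or $\ell\mid|Z(G)|$), noting that replacements have been found in some cases but that the question ``is still open in general.'' So there is no proof in the paper to compare against.

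Your proposal is not a proof either, and to your credit you say so: you outline a plausible strategy (Bonnaf\'e--Rouquier reduction to quasi-isolated series, then Geck--Hiss style arguments via generalised Gelfand--Graev characters, then Jordan decomposition to pass from unipotent to general $s$) and correctly identify the genuine gaps --- unitriangularity in bad characteristic (Problem~\ref{prob:decmat}), the quasi-isolated exceptional cases, and disconnected-centre issues. This is essentially the same circle of ideas and the same obstacles that the paper's surrounding discussion records. One point to flag: even in the good-prime case the candidate $X_B=\cE(G,s)\cap\Irr(B)$ is already known \emph{not} to work when $\ell$ divides $|Z(G)|$, so your ``natural candidate'' must be modified there, not merely verified; the paper makes this explicit (``It is known that this statement can no longer hold when either $\ell$ divides $|Z(G)|$, or when $\ell$ is bad for $\bG$''). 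In short, your outline is a reasonable research programme aligned with the paper's commentary, but it does not close the conjecture, and the paper does not claim to either.
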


Geck and Hiss \cite{GH91} exhibited such a basic set when $\ell$ is good for
the underlying algebraic group $\bG$ and does not divide the order of $Z(G)$:
in this case $\cE(G,s)$ is a basic set for the union of blocks $\cE_\ell(G,s)$.
This is yet another situation in which the theories of Brauer and of Lusztig
fit together perfectly. It is known that this statement can no longer hold
when either $\ell$ divides $|Z(G)|$, or when $\ell$ is bad for $\bG$.
In some cases, replacements have been found, but this is still
open in general.

\vskip 1pc
\noindent
{\bf Acknowledgement:} We thank Britta Sp\"ath for her pertinent comments on a
previous version.


\end{document}